\documentclass[letterpaper,11pt,leqno]{article}
\usepackage[T1]{fontenc}
\usepackage{newtxtext}
\RequirePackage[scaled=.9]{helvet}
\usepackage[dvipsnames,svgnames,x11names]{xcolor}
\usepackage{orcidlink}

\usepackage[authoryear]{natbib}     
\usepackage{hyperref}
\hypersetup{bookmarksnumbered=true, colorlinks=true,
    citecolor=ForestGreen,       
    linkcolor=Blue, 		     
    urlcolor=MidnightBlue     	 
}

\usepackage{amsthm,amsmath,amsfonts,amssymb}
\usepackage{physics,mathrsfs,bm,bbm}
\usepackage{enumitem,multicol,multirow}
\usepackage{graphicx,subfig,tikz}
\graphicspath{{../figure/}{../art/}}

\usepackage[linesnumbered,ruled]{algorithm2e}
\SetKwRepeat{Do}{do}{while}

\theoremstyle{plain}
\newtheorem{theorem}{Theorem}
\newtheorem{lemma}{Lemma}
\newtheorem{proposition}{Proposition}

\theoremstyle{remark}

\newtheorem{example}{Example}
\theoremstyle{definition}

\newtheorem{assumption}{Assumption}

\usepackage{etoolbox}
\newcommand{\renewAformat}[1]{%
  \expandafter\renewcommand\csname the#1\endcsname{A\arabic{#1}}%
  \setcounter{#1}{0}%
}
\newcommand{\renewSformat}[1]{%
  \expandafter\renewcommand\csname the#1\endcsname{S\arabic{#1}}%
  \setcounter{#1}{0}%
}

\newcommand{\R}{\mathbb{R}}					
\newcommand{\N}{\mathsf{N}}                   
\newcommand{\1}{\mathbbm{1}}				
\renewcommand{\Pr}{\mathbb{P}}				
\newcommand{\E}{\mathbb{E}}					
\newcommand{\Cov}{\operatorname{Cov}}		
\newcommand{\normop}[2][]{#1\lVert #2 #1\rVert_{\mathrm{op}}}
\newcommand{\inprod}[3][]{#1\langle #2, #3 #1\rangle}

\usepackage{authblk}
\title{Statistical Inference on Gradient Flows}
\author{Tongyu Li\orcidlink{0009-0007-5051-2716}}
\author{Alexander Giessing\orcidlink{0000-0002-6917-0652}}
\affil{Department of Statistics and Data Science, National University of Singapore\\ \texttt{tongyuli@nus.edu.sg, giessing@nus.edu.sg}}
\date{}

\begin{document}
\maketitle

\begin{abstract}
Gradient-based algorithms are central to modern statistical estimation, yet their statistical analysis is often restricted to fixed-time behavior, such as convergence to a population target or fluctuations at a prescribed iteration. 
In many applications, however, uncertainty quantification is needed along the entire optimization path, especially when the stopping time is data-dependent or divergent.
In this paper, we develop a theory for time-uniform statistical inference on gradient flows arising from empirical risk minimization.
We prove a uniform central limit theorem that characterizes the deviation between empirical and population gradient flows as a continuous-time Gaussian process over the entire nonnegative real line. 
Building on this result, we introduce an algorithm-aware covariance estimator that evolves jointly with the gradient flow and avoids matrix inversion, resampling, or sample splitting.
We show that the covariance estimator is uniformly consistent over time and use it to construct confidence intervals for the target parameter with asymptotically valid coverage.
Our results connect optimization dynamics with statistical inference and provide practical tools for uncertainty quantification in gradient-based methods.
\end{abstract}
\noindent\textit{Keywords:} {algorithmic estimation}; 
{empirical processes}; 
{gradient descent}; 
{time-uniform inference}; 
{uniform central limit theorem}.

\section{Introduction}
\label{sec:intro}
\subsection{Background and motivation}
M-estimation or empirical risk minimization is a foundational approach to constructing estimators from data \citep{vdgeer2000empirical,koltchinskii2011oracle,vdlaan2026researcher}. 
In contemporary applications, however, such estimators are rarely available in closed form. Instead, they are realized as the output of computational procedures, most prominently gradient-based iterative algorithms \citep{bottou2018optimization,lan2020first,wright2022optimization}. 
Consequently, the statistical analysis of algorithmic estimators must go beyond classical asymptotic theory for exact minimizers and explicitly account for the dynamics of the optimization algorithm. 

This perspective has motivated a growing literature at the interface of optimization and statistics that studies the probabilistic behavior of training trajectories. Among recent advances, \citet{agarwal2012fast,chandrasekher2023sharp,loh2017statistical,balakrishnan2017statistical,dwivedi2020singularity} established the convergence rates of parameter estimation, \citet{celentano2021high,han2025entrywise,celentano2025state,gerbelot2024rigorous,benarous2024high} characterized the state evolution that tracks low-dimensional functionals, and \citet{han2025precise,chen2025learning,yan2025semiparametric,dandapanthula2025gradient,martin2026high,fan2026high,nishiyama2026high} developed various model-specific theories to capture finer aspects of training behavior. Complementary lines of research incorporate additional practical constraints, including dependent data \citep{shen2026sgd,liu2023statistical}, contaminated distribution \citep{prasad2020robust,zhou2026minimax}, infinite variance \citep{blanchet2024limit,blanchet2026statistical}, memory limitations \citep{berg2024statistical,quan2024optimal}, and privacy requirements \citep{avella2023differentially,xia2025statistical}. 

Building on these developments, a growing body of work seeks to tailor statistical inference procedures to algorithmic estimators, with the goal of quantifying uncertainty directly from the training trajectories. 
Early contributions have focused on one-pass stochastic gradient algorithms and established asymptotic normality of the iterates or their averaged versions \citep{kushner2003stochastic}, together with practical methods for covariance estimation. 
For example, \citet{chen2020statistical} proposed plug-in and batch-means estimators for the asymptotic covariance of the averaged stochastic gradient descent, allowing the construction of confidence intervals from the algorithmic path alone.
Subsequent works introduced more computationally efficient inference procedures for stochastic optimization, including Kiefer--Wolfowitz methods \citep{chen2024online}, recursive score-based estimators for high-dimensional generalized linear models \citep{shi2021statistical}, and online covariance estimation techniques that track the evolving variability of iterates \citep{zhu2023online}. 
More recently, \citet{han2024online} extended these ideas to high-dimensional and debiased settings, \citet{sheshukova2025gaussian,butyrin2025improved} derived Gaussian approximations and bootstrap procedures, and \citet{carter2025statistical} proposed inference frameworks applicable to a broader class of online algorithms. 
In parallel, other work has investigated more general gradient-based schemes, including uncertainty quantification for early-stopped iterative estimators in linear models \citep{bellec2024uncertainty} and gradient-based debiased inference in single-index regression under the mean-field regime \citep{han2024gradient}.

Despite this progress, most existing results concern the statistical analysis at a fixed iteration or the terminal estimator, leaving the behavior of the entire training trajectory comparatively less understood. 
In practice, algorithmic dynamics often evolve over long time horizons and the stopping rule is data-dependent. Hence, valid uncertainty quantification must control the estimator uniformly along the path rather than at a single time point. Without such time-uniform control, inference at a chosen iteration may fail to reflect the accumulated stochastic fluctuations of the algorithm, and coverage guarantees can break down when the number of iterations is random or diverges. 
Recent works have begun to explore aspects of this problem, including universality and long-time dynamics of gradient descent in ultra-high dimensions \citep{han2025long}, time-uniform concentration inequalities for iterative algorithms \citep{xie2024asymptotic,pham2025time,kar2026high}, and long-time or functional central limit theorems for stochastic approximation schemes \citep{agrawalla2025statistical,flamand2026functional}. 
Nevertheless, a general theory of time-uniform statistical inference along the training trajectory remains largely underdeveloped and has emerged as a fundamental open problem  \citep[e.g.,][Section~5]{bellec2024uncertainty}.

\subsection{Our contributions}
We develop a time-uniform theory for statistical inference on gradient flows, including distributional approximations and covariance estimation. 
Gradient flow is the continuous-time analogue of full-batch gradient descent, and therefore provides a tractable model for studying the statistical behavior of algorithmic estimators along their trajectories.
The contributions of our work are threefold:

\begin{itemize}
\item In Section~\ref{sec:converge}, we prove a \textbf{uniform central limit theorem for gradient flows}, thereby bridging optimization dynamics and empirical process theory. 
The starting point is a comparison between the empirical gradient flow with its population counterpart: after linearization, their discrepancy can be represented as an empirical process indexed by time-parametrized functions (Section~\ref{sec:linear}). 
Thus, the uniform central limit theorem over the full time domain reduces to establishing the Donsker property of a suitable one-parameter function class. 
The main challenge is that its index set, although one-dimensional, is unbounded. 
To control the complexity of one-parameter function classes, we establish an arc length-based condition (Appendix~\ref{appn:Donsker}) and verify the corresponding finiteness for the sensitivity functions along the gradient flow (Section~\ref{sec:lead}). 
By combining an integral representation of the process with a bootstrap technique for differential equations, we obtain a time-uniform treatment of the infinite time domain (Section~\ref{sec:main}).  In this way, we extend classical pointwise asymptotic theory to a pathwise setting. 
This shows that gradient-flow trajectories, despite evolving over an infinite interval, trace low-complexity subsets of the ambient function space and hence admit uniform Gaussian asymptotics. 

\item In Section~\ref{sec:cov_est}, we develop a practical and theoretically grounded method for \textbf{uncertainty quantification along the entire optimization path}.
Specifically, we propose an algorithm-aware covariance estimation procedure  that couples inference with the evolution of the gradient flow (Section~\ref{sec:method}). 
The estimator is constructed through an auxiliary dynamical system that captures the first-order sensitivity of the trajectory to perturbations in the empirical measure. It can be implemented alongside the primary algorithm with moderate additional cost and, unlike classical approaches, avoids matrix inversion, resampling, and sample splitting. 
We show that this estimator converges uniformly to the population covariance function at a nearly parametric rate (Section~\ref{sec:theory}). Combined with the uniform distributional Gaussian approximation, this enables the construction of asymptotically valid confidence intervals for the target parameter and accommodates data-dependent and diverging stopping times.
To the best of our knowledge, this is the first inference method for gradient-based estimators beyond fixed-time asymptotics. 

\item In Section~\ref{sec:numerical}, we complement our theoretical developments with numerical studies that \textbf{illustrate the practical implementation and validate the finite-sample performance}.
On the algorithmic side (Section~\ref{sec:algo}), we provide concrete iterative procedures for solving the coupled dynamics underlying gradient flows and covariance estimation, based on standard discretization schemes such as Euler and higher-order Runge–Kutta methods. 
On the empirical side (Section~\ref{sec:simu}), simulation experiments across a range of statistical models demonstrate that the proposed time-uniform inference method achieves accurate coverage and stable behavior. 
The results also confirm that the Gaussian asymptotics derived from the the uniform central theorem for gradient flows provide a reliable basis for uncertainty quantification of gradient descent and that the performance is robust to the choice of the numerical solver.
These findings substantiate the practical relevance of our theory and highlight the effectiveness of integrating optimization dynamics with statistical inference. 
\end{itemize}


\subsection{Notation}
Let $\nu f = \int f \dd{\nu}$ for any signed measure $\nu$ and integrable function $f$, where $f$ is allowed to take values as vectors or matrices. 
The real line $\R$ and its subsets are equipped with the Lebesgue measure. 
Write $\norm{\cdot}_{\ell^2}$ for the Euclidean norm, i.e., $\norm{v}_{\ell^2} = (\sum_{k=1}^{d}v_k^2)^{1/2}$ for $v = (v_1,\dots,v_d)^\top \in \R^d$. 
If $\nu$ is a measure and $p \ge 1$, then the $L^p(\nu;\R^d)$-norm is defined for $\R^d$-valued measurable functions $f$ by $\norm{f}_{L^p(\nu)} = (\nu \norm{f}_{\ell^2}^p)^{1/p}$, where $\norm{f}_{L^\infty(\nu)}$ is the $\nu$-essential supremum of $\norm{f}_{\ell^2}$. Denote $L^p(\nu) = L^p(\nu;\R)$ for simplicity. 
Let $\normop{A} = \sup_{v:\norm{v}_{\ell^2}\le1} \norm{Av}_{\ell^2}$ be the operator norm of a matrix $A$, calculated as the largest singular value of $A$. 
Denote by $0_d$ the zero vector in $\R^d$, by $e_k = (0_{k-1}^\top,1,0_{d-k}^\top)^\top$ the $k$th standard basis vector of $\R^d$, and by $\1\{\cdot\}$ the indicator function. 
For a mapping $f$ between normed vector spaces, $\nabla f$ denotes its Fr\'echet derivative, which coincides with the Jacobian matrix in the case where $f$ is a vector-valued function of several variables. 
For two sequences $(X_n)$ and $(Y_n)$ of nonnegative random variables, $Y_n = o_\Pr(X_n)$ means that $\lim_{\delta\searrow0} \limsup_{n\to\infty} \Pr(Y_n > \delta X_n) = 0$, and $Y_n = \mathcal{O}_\Pr(X_n)$ means that $\lim_{C\to\infty} \limsup_{n\to\infty} \Pr(Y_n > C X_n) = 0$.  We denote convergence in distribution of a sequence of random variables $(X_n)$ to a random variable $Z$ by $X_n \xrightarrow{d}Z$.

\section{Problem setup}
\label{sec:setup}
Let $Z_1,\dots,Z_n$ be independent observations drawn from a common distribution $P$, and denote by $P_n$ the corresponding empirical measure. For a family of criterion functions $m_\theta$ indexed by $\theta\in\R^d$, define the population and empirical risk functions 
\[ M(\theta) = P m_\theta = \E\{m_\theta(Z_1)\} \qand 
M_n(\theta) = P_n m_\theta = n^{-1}\sum_{i=1}^{n} m_\theta(Z_i) .\]
The target parameter $\theta_*\in\R^d$ is a minimizer of the population risk $M$, and its natural sample analogue is an empirical minimizer of $M_n$. In many modern statistical problems, the minimizer is not available in closed from and must be obtained numerically by an optimization algorithm. 

We model this optimization algorithm through gradient-flow dynamics. Gradient flow is the continuous-time analogue of full-batch gradient descent
\citep{scieur2017integration,francca2021gradient} and has been widely used to study optimization trajectories, e.g., in neural-network models \citep{zhao2023symmetries,wendin2025gradient}.
Let $\psi_\theta = \pdv*{m_\theta}{\theta}$, interpreted as a subgradient when the criterion is non-smooth. The subgradient of $M_n$ is then
\[\Psi_n(\theta) = P_n \psi_\theta = n^{-1}\sum_{i=1}^{n} \psi_\theta(Z_i).\]
We study the functional estimator $\hat{\theta}$ defined through the gradient flow equation 
\begin{equation}\label{eq:gradflow_emp}
\dv{\hat{\theta}(t)}{t} = -\Psi_n(\hat{\theta}(t)),\qquad \hat{\theta}(0) = \theta_0,
\end{equation}
where $\theta_0 \in \R^d$ is a fixed initialization. 
Our focus on full-batch dynamics is motivated in part by settings in which full-batch gradient methods can outperform one-pass stochastic variants \citep{wu2025risk,kovavcevic2026full}.

Under suitable regularity conditions on $M_n$, the empirical gradient flow converges as $t\to\infty$ to a terminal estimator, denoted by $\hat{\theta}(\infty)$. This terminal point is an M-estimator.
The classical analyses of iterative algorithms then often rely on the terminal decomposition 
\[ \hat{\theta}(t) - \theta_* = \{\hat{\theta}(t) - \hat{\theta}(\infty)\} + \{\hat{\theta}(\infty) - \theta_*\} .\]
The first term is an optimization gap, made negligible by running the algorithm long enough. The second term is the empirical fluctuation and is governed by classical asymptotic theory. 
This perspective is well suited to exact or near-exact empirical minimizers, but it is less suited to early-stopped or data-adaptively stopped algorithms. Indeed, it can be difficult to make the optimization error uniformly negligible over random empirical landscapes. Moreover, inference based solely on the terminal estimator $\hat{\theta}(\infty)$ discards the statistical information contained in the algorithmic trajectory.

We therefore take a different perspective and study the stochastic fluctuations along the entire optimization path. 
Putting \[\Psi(\theta) = P\psi_\theta = \E\{\psi_\theta(Z_1)\},\] we introduce the gradient flow $\theta^\circ$ at the population level, defined as the solution of 
\begin{equation}\label{eq:gradflow_pop}
\dv{\theta^\circ(t)}{t} = -\Psi(\theta^\circ(t)), \qquad \theta^\circ(0) = \theta_0.
\end{equation}
Under regularity conditions on $M$, the limit of the population gradient flow $\theta^\circ(t)$ converges as $t \to \infty$ to the target parameter $\theta_*$. 
We now replace the terminal comparison by the pathwise decomposition
\[ \hat{\theta}(t) - \theta_* = \{\hat{\theta}(t) - \theta^\circ(t)\} + \{\theta^\circ(t) - \theta_*\} .\]
The first term is the stochastic fluctuation of the empirical trajectory around its population counterpart; the second term is a deterministic population bias incurred by running the optimization.
Both terms depend on $t$, but only the first carries sampling noise. This decomposition keeps the optimization path in the statistical analysis and separates sampling fluctuation from population-level early-stopping bias.

The main object of study in this paper is the fluctuation process $\{n^{1/2}(\hat{\theta}-\theta^\circ)(t):t\in[0,\infty)\}$, for which we seek weak convergence and covariance estimation uniformly over $[0,\infty)$. 
Since the analysis concerns entire trajectories, we assume throughout that the empirical and population gradient flows are well-defined on the nonnegative real line. This is automatic, for instance, when the driving vector fields are globally Lipschitz. More generally, let $\Theta$ be a Hilbert space equipped with norm $\norm{\cdot}$. For a driving function $g : \Theta \to \Theta$ and an initial point $\theta_0 \in \Theta$, consider the initial value problem 
\[ \dv{\theta(t)}{t} = -g(\theta(t)) , \quad \theta(0)=\theta_0 .\]
Even if $g$ is discontinuous, as may occur for nonsmooth losses such as quantile regression, the Br\'ezis--Komura theorem \citep[Theorem~11.7]{ambrosio2024lectures} still yields existence and uniqueness of the gradient flow. 
The corresponding sufficient condition is that $g$ be a sub-gradient of a lower semicontinuous function $f : \Theta \to \R$ such that $f - (\lambda/2)\norm{\cdot}^2$ is convex for some $\lambda\in\R$, not necessarily nonnegative.

As a first step toward the time-uniform analysis, the following elementary comparison bound shows that the deviation between the empirical and population flows is controlled by the empirical process indexed by the population trajectory.
\begin{proposition}\label{prop:vanilla}
For any $\lambda\in\R$, it holds on the event $\{M_n - (\lambda/2)\norm{\cdot}_{\ell^2}^2 \text{ is convex}\}$ that 
\[\begin{aligned}
\norm{\hat{\theta}(t)-\theta^\circ(t)}_{\ell^2} 
&\le \int_0^t \exp\{-\lambda(t-s)\} \norm{\Psi_n(\theta^\circ(s))-\Psi(\theta^\circ(s))}_{\ell^2} \dd{s} \\
&\le t \exp(\lambda^{-}t) \sup_{s\in[0,t]}\norm{(P_n-P)\psi_{\theta^\circ(s)}}_{\ell^2} ,\quad \forall t\in[0,\infty) ,
\end{aligned}\]
where $\lambda^- = \max(-\lambda,0)$ is the negative part of $\lambda$.
\end{proposition}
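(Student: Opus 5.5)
The plan is a standard one-sided energy estimate for the difference of the two flows. Set $\Delta(t) = \hat\theta(t) - \theta^\circ(t)$ and $r(s) = \Psi_n(\theta^\circ(s)) - \Psi(\theta^\circ(s)) = (P_n-P)\psi_{\theta^\circ(s)}$. Both trajectories are absolutely continuous, as Br\'ezis--Komura solutions are, and satisfy their equations for almost every $t$. Hence
\[
\dv{t}\Delta(t) = -\{\Psi_n(\hat\theta(t)) - \Psi_n(\theta^\circ(t))\} - r(t) \quad \text{for a.e. } t.
\]
On the event that $M_n - (\lambda/2)\norm{\cdot}_{\ell^2}^2$ is convex, its subgradient $\Psi_n - \lambda\,\mathrm{id}$ is monotone. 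This gives the key inequality
\[
\inprod{\Psi_n(a)-\Psi_n(b)}{a-b} \ge \lambda\norm{a-b}_{\ell^2}^2
\]
for all $a,b$. When $\Psi_n$ is only a subgradient selection, I would use that the selection realized along the flow lies in the subdifferential, so monotonicity of the subdifferential still applies.

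Next, I would differentiate $\tfrac12\norm{\Delta(t)}_{\ell^2}^2$, which is absolutely continuous. This yields
\[
\tfrac12\dv{t}\norm{\Delta}_{\ell^2}^2 \le -\lambda\norm{\Delta}_{\ell^2}^2 + \norm{r(t)}_{\ell^2}\norm{\Delta}_{\ell^2}.
\]
To avoid dividing by $\norm{\Delta}$ where it vanishes, I would work with $u_\epsilon = (\norm{\Delta}_{\ell^2}^2+\epsilon^2)^{1/2}$. Then
\[
u_\epsilon' \le -\lambda\norm{\Delta}_{\ell^2}^2/u_\epsilon + \norm{r}_{\ell^2},
\]
and $-\lambda\norm{\Delta}^2/u_\epsilon = -\lambda u_\epsilon + \lambda\epsilon^2/u_\epsilon \le -\lambda u_\epsilon + |\lambda|\epsilon$. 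Multiplying by the integrating factor $e^{\lambda t}$ and integrating from $0$ (where $u_\epsilon(0)=\epsilon$, since $\Delta(0)=0$) gives
\[
u_\epsilon(t) \le \epsilon e^{-\lambda t} + \int_0^t e^{-\lambda(t-s)}\{\norm{r(s)}_{\ell^2}+|\lambda|\epsilon\}\dd{s}.
\]
Letting $\epsilon\searrow0$ yields the first inequality. This regularization step, together with justifying the chain rule under only absolute continuity, is the only delicate point. I expect it to be routine, however, because both flows are Lipschitz in time on compacts under the Br\'ezis--Komura framework.

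For the second inequality, I would bound $\norm{r(s)}_{\ell^2}$ by $\sup_{s\in[0,t]}\norm{(P_n-P)\psi_{\theta^\circ(s)}}_{\ell^2}$ and pull it out of the integral. It then remains to show $e^{-\lambda(t-s)} \le e^{\lambda^- t}$ for $s\in[0,t]$. If $\lambda\ge0$, the factor is at most $1 = e^{0}$. If $\lambda<0$, it equals $e^{|\lambda|(t-s)} \le e^{\lambda^- t}$. Integrating the constant over $[0,t]$ gives the factor $t$.
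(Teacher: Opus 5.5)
Your proof is correct and follows essentially the paper's argument. The common steps are: monotonicity of $\Psi_n-\lambda\,\mathrm{id}$ from the convexity of $M_n-(\lambda/2)\norm{\cdot}_{\ell^2}^2$, a differential inequality for $\norm{\hat\theta-\theta^\circ}_{\ell^2}$, the integrating factor $\exp(\lambda t)$, and the elementary bound $\exp\{-\lambda(t-s)\}\le\exp(\lambda^- t)$. Your regularization $u_\epsilon=(\norm{\hat\theta-\theta^\circ}_{\ell^2}^2+\epsilon^2)^{1/2}$ is a welcome tidying of the paper's version, which divides by $\norm{\hat\theta(t)-\theta^\circ(t)}_{\ell^2}$ directly even though this quantity vanishes at $t=0$.
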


Proposition~\ref{prop:vanilla} gives a baseline stability comparison on each fixed horizon $t<\infty$. It yields consistency uniformly over $s\in[0,t]$ whenever the class $\{e_k^\top\psi_{\theta^\circ(s)}:s\in[0,t],\,1\le k\le d\}$ is $P$-Glivenko--Cantelli, which follows under standard regularity conditions from \citet[Theorems~2.4.1 and~2.7.17]{vVW2023weak}. 
This is only a compact-time law-of-large-numbers result: it does not imply tightness, a Donsker theorem, or weak convergence of the stochastic trajectory $t\mapsto\hat\theta(t)-\theta^\circ(t)$. 
Nor does it provide uniform control over $t\in[0,\infty)$. The latter requires additional curvature or sharper empirical-process arguments. 
If $M_n$ is strongly convex, the exponential kernel is integrable and the same comparison can yield full time-uniform consistency. Without sufficient positive curvature, the prefactor $t\exp(\lambda^-t)$ diverges as $t\to\infty$, motivating the sharper time-uniform analysis developed in the next section.

Before turning to the time-uniform Gaussian asymptotics, we record several standard examples of criteria to which the gradient-flow framework applies.
\begin{example}[Generalized linear regression]\label{ex:glr}
Suppose that observations are made on several predictor variables and one response variable, so $Z_i = (X_i,Y_i)$, $i=1,\dots,n$, take values in $\R^d \times \mathcal{Y}$. 
In many regression applications, the criterion function has the form \[ m_\theta(x,y) = l(x^\top\theta,y) + p(\theta) \] for some functions $l : \R\times\mathcal{Y} \to \R$ and $p : \R^d \to [0,\infty)$ that assess the fidelity to data and the plausibility of parameter, respectively. 
Hence, \[\psi_\theta(x,y) = l'(x^\top\theta,y)x + \pdv*{p(\theta)}{\theta},\] where $l'$ is the subgradient of $l$ with respect to the first argument. 
The following is a non-exhaustive list of choices of $l$ for different purposes: 
\begin{itemize}
\item linear regression: $l_{\mathrm{lin}}(a,y) = (y-a)^2/2$ with $\mathcal{Y}=\R$;
\item logistic regression: $l_{\mathrm{logit}}(a,y) = \log(1+\exp(a)) - a y$ with $\mathcal{Y}=\{0,1\}$;
\item Poisson regression: $l_{\mathrm{Poi}}(a,y) = \exp(a) - a y$ with $\mathcal{Y}=\mathbb{N}$;
\item $\tau$th quantile regression:  $l_{\mathrm{qt}}(a,y) = (y-a)(\tau-\1\{y<a\})$ with $\mathcal{Y}=\R$ and $\tau\in(0,1)$; 
\item phase retrieval: $l_{\mathrm{pr}}(a,y) = (y-a^2)^2/2$ with $\mathcal{Y}=\R$. 
\end{itemize}
The Br\'ezis--Komura theorem applies directly to the convex examples above with convex penalty. Indeed, $\theta \mapsto l(x^\top\theta,y)$ inherits the convexity from $a \mapsto l(a,y)$ in linear/ logistic/ Poisson/ quantile regression. 
The phase-retrieval loss is non-convex, but the smallest eigenvalue of its Hessian is universally lower bounded by $-2\normop{yxx^\top}$. Indeed, we have the lower bound $\pdv*[2]{l_{\mathrm{pr}}(a,y)}{a} = 6a^2 - 2y \ge - 2y$. 
\end{example}

Among these examples, ordinary least squares is especially useful because both the empirical and population gradient flows are available in closed form. The next example uses this explicit structure to derive a uniform linear expansion for the pathwise fluctuation.

\begin{example}[Ordinary least squares]\label{ex:ols}
Consider the case of Example~\ref{ex:glr} where the squared error loss for linear regression is applied with zero penalty, i.e., $m_\theta(x,y) = (y-x^\top\theta)^2/2$ and $p \equiv 0$. 
Then the gradient flows \eqref{eq:gradflow_emp} and \eqref{eq:gradflow_pop} admit the closed-form expressions
\[ \hat{\theta}(t) = \exp\!\big(\!-t\hat{\Sigma}\big)\theta_0 + \int_0^t \exp\!\big(\!-s\hat{\Sigma}\big) \hat{\xi} \dd{s} ,\]
\[ \theta^\circ(t) = \exp(-t\Sigma)\theta_0 + \int_0^t \exp(-s\Sigma) \xi \dd{s} ,\]
where $\hat{\Sigma} = n^{-1}\sum_{i=1}^{n}X_iX_i^\top$, $\hat{\xi} = n^{-1}\sum_{i=1}^{n}Y_iX_i$, $\Sigma = \E(X_1X_1^\top)$ and $\xi = \E(Y_1X_1)$. 
Let $\tilde{\Sigma}_\alpha = \alpha\hat{\Sigma} + (1-\alpha)\Sigma$ and $\tilde{\xi}_\alpha = \alpha\hat{\xi} + (1-\alpha)\xi$ and write
\[\begin{aligned}
& (\hat{\theta}-\theta^\circ)(t) = \int_0^1 \pdv{\alpha}\bigg\{\exp\!\big(\!-t\tilde{\Sigma}_\alpha\big)\theta_0 + \int_0^t \exp\!\big(\!-s\tilde{\Sigma}_\alpha\big) \tilde{\xi}_\alpha \dd{s}\bigg\} \dd{\alpha} \\
&= \int_0^1 \big\{-F_0(t;\tilde{\Sigma}_\alpha)[\hat{\Sigma}-\Sigma]\theta_0 -F_1(t;\tilde{\Sigma}_\alpha)[\hat{\Sigma}-\Sigma]\tilde{\xi}_\alpha +F_2(t;\tilde{\Sigma}_\alpha)(\hat{\xi}-\xi)\big\} \dd{\alpha} ,
\end{aligned}\]
where $F_0,F_1,F_2$ are symmetric matrix‑parametrized functions defined as 
\[ F_0(t;A)[E] = \int_0^t \exp\{-(t-s)A\} E \exp(-sA) \dd{s} ,\]
\[ F_1(t;A)[E] = \iint_{0\le u\le s\le t} \exp\{-(t-s)A\} E \exp(-uA) \dd{u}\dd{s} ,\]
and $F_2(t;A) = \int_0^t \exp(-sA) \dd{s}$. 
Assume that the smallest eigenvalue of $\Sigma$ is $\lambda_* > 0$. Then, the perturbation bound \citep[Corollary~4.1]{kaagstrom1977bounds}
\[ \normop{\exp\!\big(\!-t\tilde{\Sigma}_\alpha\big)-\exp(-t\Sigma)} \le t \alpha\normop{\hat{\Sigma}-\Sigma} \exp\{-(\lambda_*-\alpha\normop{\hat{\Sigma}-\Sigma})t\} \]
implies that on the event $\{\normop{\hat{\Sigma}-\Sigma}<\lambda_*\}$, 
\[ \sup_{\alpha\in[0,1]}\sup_{t\in[0,\infty)} \normop{F_j(t;\tilde{\Sigma}_\alpha)-F_j(t;\Sigma)} \le C \normop{\hat{\Sigma}-\Sigma} / (\lambda_*-\normop{\hat{\Sigma}-\Sigma})^{3-\abs{j-1}} \]
for $j=0,1,2$ and some universal constant $C>0$. 
In view of the finite-dimensional central limit theorem for $(\hat{\Sigma},\hat{\xi})$, we can conclude that 
\[ n^{1/2}(\hat{\theta}-\theta^\circ) \approx -F_0(\cdot;\Sigma)[n^{1/2}(\hat{\Sigma}-\Sigma)]\theta_0 -F_1(\cdot;\Sigma)[n^{1/2}(\hat{\Sigma}-\Sigma)]\xi +F_2(\cdot;\Sigma)n^{1/2}(\hat{\xi}-\xi) \]
converges in distribution to a zero-mean Gaussian process uniformly on $[0,\infty)$.
\end{example}
The ordinary least squares example suggests the general mechanism developed next: the empirical gradient flow admits a linearized leading term that can be written as an empirical process indexed by the population trajectory. The main task is to prove a Gaussian asymptotics for this time-indexed empirical process uniformly over $[0,\infty)$.


\section{Time-uniform central limit theorem}
\label{sec:converge}
In this section, we analyze the fluctuation process
$\hat{\theta} - \theta^\circ$
over the entire interval $[0,\infty)$, laying the foundation for time-uniform statistical inference.
The key insight is that the population gradient flow trajectory gives rise to the time-indexed class of sensitivity functions $\{\Phi_t : t \geq 0\}$ and that this class can have low complexity even though the index set $[0,\infty)$ is unbounded. 

\subsection{Linearization of gradient flows}
\label{sec:linear}

In statistical asymptotic theory \citep{serfling1980approximation,hall2023course}, linearization means approximating the estimation error by a linear functional of the empirical process, usually through the estimator's influence function at the population truth. This is distinct from lazy training in optimization \citep{chizat2019lazy}, which linearizes the model with respect to parameter displacement around the initialization. Here, the relevant notion of linearity is instead linearity in the sampling distribution, as required for asymptotic statistical inference.

Throughout this paper, we work in the differentiable regime and assume that 
\[H(\theta) = \nabla \Psi(\theta)\]
exists along the population trajectory. Under regularity conditions on the population risk $M$, $\Psi = \nabla M$ and thus $H$ is the Hessian of $M$. 
To describe the evolution of gradient flows at the population level, let the matrix $\Pi(t,s) \in \R^{d\times d}$, $s,t\in[0,\infty)$, be defined by the matrix-valued initial value problem 
\begin{equation}\label{eq:transition_matrix}
\pdv{t}\Pi(t,s) = -H(\theta^\circ(t)) \Pi(t,s) ,\quad \Pi(s,s)=I_d ,
\end{equation}
where $I_d = (e_1,\dots,e_d)$ is the identity matrix.
The matrix $\Pi(t,s)$ is called the principal fundamental matrix solution (at $s$). If $\{H(\theta^\circ(t))\}_{t\in[0,\infty)}$ is a family of commuting matrices, then
\[\Pi(t,s) = \exp\{-\int_s^t H(\theta^\circ(u)) \dd{u}\}.\]
Tracking the first-order perturbation of the gradient flows, we define the sensitivity function $\Phi_t$ pointwise for $t\in[0,\infty)$ as the accumulation of propagated gradients,
\begin{equation}\label{eq:func_int}
\Phi_t(\cdot) = \int_0^t \Pi(t,s) \psi_{\theta^\circ(s)}(\cdot) \dd{s}.
\end{equation}
This vector-valued function records how empirical perturbations of the gradient field accumulate along the population trajectory.

\begin{lemma}[Linearization of the gradient flow]\label{lem:decomposition}
For the empirical and population gradient flows $\hat{\theta}$ and $\theta^\circ$ defined by \eqref{eq:gradflow_emp} and \eqref{eq:gradflow_pop}, respectively, we have for every $t\in[0,\infty)$,
\[ \hat{\theta}(t)-\theta^\circ(t) = \Delta_n(t) - \int_0^t \Pi(t,s) \{R_n(s) + D_{\Psi}(\hat{\theta}(s),\theta^\circ(s))\} \dd{s} ,\]
where $\Delta_n , R_n : [0,\infty) \to \R^d$ and $D_\Psi : \R^d\times\R^d \to \R^d$ are given by 
\begin{equation}\label{eq:lead}
\Delta_n(t) = -\int_0^t \Pi(t,s) (\Psi_n-\Psi)(\theta^\circ(s)) \dd{s} = -(P_n-P)\Phi_t ,
\end{equation}
\begin{equation}\label{eq:doubly-centered}
R_n(t) = (\Psi_n-\Psi)(\hat{\theta}(t))-(\Psi_n-\Psi)(\theta^\circ(t)) ,
\end{equation}
\begin{equation}\label{eq:Bregman_div}
D_\Psi(\theta,\beta) = \Psi(\theta) - \Psi(\beta) - H(\beta)(\theta-\beta) .
\end{equation}
\end{lemma}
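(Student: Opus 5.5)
The identity is a variation-of-constants (Duhamel) formula for the difference $\delta(t) = \hat{\theta}(t)-\theta^\circ(t)$. I will first write a linear inhomogeneous ODE for $\delta$, then solve it with the fundamental matrix $\Pi$ from \eqref{eq:transition_matrix}.

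The first step is to compute the forcing term. Subtracting \eqref{eq:gradflow_pop} from \eqref{eq:gradflow_emp} gives
\[ \dot{\delta}(t) = -\Psi_n(\hat{\theta}(t)) + \Psi(\theta^\circ(t)), \qquad \delta(0) = 0 . \]
I split the right-hand side by adding and subtracting, which gives
\[ \Psi_n(\hat{\theta}) - \Psi(\theta^\circ) = (\Psi_n-\Psi)(\theta^\circ) + R_n + \{\Psi(\hat{\theta})-\Psi(\theta^\circ)\} . \]
By \eqref{eq:Bregman_div}, the last bracket equals $H(\theta^\circ)\delta + D_\Psi(\hat{\theta},\theta^\circ)$. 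Therefore
\[ \dot{\delta}(t) = -H(\theta^\circ(t))\delta(t) - \{(\Psi_n-\Psi)(\theta^\circ(t)) + R_n(t) + D_\Psi(\hat{\theta}(t),\theta^\circ(t))\} =: -H(\theta^\circ(t))\delta(t) - f(t). \]
This splitting is a purely algebraic identity, so no Taylor remainder estimate is needed.

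The second step applies variation of constants. I set $u(s) = \Pi(t,s)\delta(s)$ for fixed $t$ and differentiate in $s$. This requires the backward identity $\partial_s \Pi(t,s) = \Pi(t,s)H(\theta^\circ(s))$. I would prove it from the cocycle property $\Pi(t,s)\Pi(s,r) = \Pi(t,r)$, which holds by uniqueness of solutions of \eqref{eq:transition_matrix}, together with invertibility of $\Pi$ (Liouville's formula). Differentiating $\Pi(t,s)\Pi(s,t) = I_d$ in $s$ then gives the identity. With it,
\[ \tfrac{d}{ds}\,\Pi(t,s)\delta(s) = \Pi(t,s)H\delta + \Pi(t,s)(-H\delta - f) = -\Pi(t,s)f(s) . \]
Integrating from $0$ to $t$ and using $\delta(0)=0$ yields $\delta(t) = -\int_0^t \Pi(t,s)f(s)\,\dd{s}$, which is exactly the claimed formula once the $(\Psi_n-\Psi)(\theta^\circ)$ part is identified with $\Delta_n$.

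The last step is the second equality in \eqref{eq:lead}. We have $(\Psi_n-\Psi)(\theta^\circ(s)) = (P_n-P)\psi_{\theta^\circ(s)}$, and $\Pi(t,s)$ is deterministic. So the claim reduces to interchanging $(P_n-P)$ with the $\dd{s}$-integral. For $P_n$ this is a finite sum and is trivial. For $P$ it is Fubini, which needs $\int_0^t \normop{\Pi(t,s)}\, P\norm{\psi_{\theta^\circ(s)}}_{\ell^2}\,\dd{s} < \infty$; this holds by continuity of $\Pi$ on the compact set $[0,t]^2$ together with local integrability of the envelope of $\psi$ along the trajectory.

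I expect the main obstacle to be regularity rather than algebra. Because the gradient may be a subgradient, $\hat{\theta}$ may be only absolutely continuous, so the chain rule for $\Pi(t,s)\delta(s)$ should be applied almost everywhere, with $\delta$ absolutely continuous and $\Pi$ $C^1$ in $s$. I would also state explicitly the measurability and integrability of $s\mapsto \psi_{\theta^\circ(s)}$ needed for the Fubini step, as standing assumptions of the differentiable regime.
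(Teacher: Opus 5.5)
Your proposal is correct and is essentially the paper's proof: the paper makes the same algebraic split, $-\{\frac{d}{dt}+H(\theta^\circ(t))\}\{\hat{\theta}(t)-\theta^\circ(t)\} = (\Psi_n-\Psi)(\theta^\circ(t)) + R_n(t) + D_\Psi(\hat{\theta}(t),\theta^\circ(t))$, and then cites the standard variation-of-constants formula. Your derivation of $\partial_s\Pi(t,s)=\Pi(t,s)H(\theta^\circ(s))$ and your Fubini justification of $\Delta_n(t)=-(P_n-P)\Phi_t$ spell out steps the paper leaves implicit.
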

The term $\Delta_n$ is the leading linear empirical-process term. The two remaing terms are higher-order remainders: $R_n$ is a doubly centered gradient increment, and $D_\Psi$ is the Bregman divergence of the population gradient map $\Psi$ derived as the remainder in a first-order Taylor expansion. 

\begin{example}[Linearization in ordinary least squares] Continue from Example~\ref{ex:ols}. 
We have $\Pi(t,s) = \exp\{-(t-s)\Sigma\}$, and thus \eqref{eq:func_int} is given by 
\[\Phi_t(x,y) = F_0(t;\Sigma)[xx^\top] \theta_0 + F_1(t;\Sigma)[xx^\top] \xi - F_2(t;\Sigma) yx.\]
Meanwhile, the higher-order terms \eqref{eq:doubly-centered} and \eqref{eq:Bregman_div} are 
\[ R_n = (\hat{\Sigma}-\Sigma)(\hat{\theta}-\theta^\circ) \qand D_\Psi \equiv 0_d .\]
Thus, the only nonlinear remainder is the product of the empirical covariance fluctuation $\hat{\Sigma} - \Sigma$ and the flow error $\hat{\theta} - \theta^\circ$, which is second order under the usual $n^{-1/2}$-scaling.
\end{example}

By Lemma~\ref{lem:decomposition},  the asymptotic behavior of $\hat{\theta} - \theta^\circ$ is governed by the empirical process $\Delta_n(t) =  - (P_n - P) \Phi_t$, whenever the two remainder terms are negligible uniformly in time. 
We defer the control of the linearization error to Section~\ref{sec:main}, and first study the uniform convergence of the leading term $\Delta_n$.

\subsection{Uniform central limit theorem for \eqref{eq:lead}}
\label{sec:lead}
The leading term $\Delta_n$ in~\eqref{eq:lead} is an empirical process indexed by the class $\{- \Phi_t: t \geq 0\}$. Thus, the main task is to establish a Donsker property for this time-indexed class. The following lemma gives a simple sufficient condition in terms of the $L^2(P)$-arc length of $t \mapsto - \Phi_t$.
An abstract version is treated in Appendix~\ref{appn:Donsker}. 

\begin{lemma}\label{lem:lead}
Assume that \[\int_0^\infty \norm{\norm{\pdv{\Phi_t}{t}}_{\ell^2}}_{L^2(P)} \dd{t} < \infty.\] Then, 
$n^{1/2}\Delta_n$ converges weakly in $L^\infty([0,\infty); \R^d)$ to a zero-mean Gaussian process with covariance function $G : [0,\infty)\times[0,\infty) \to \R^{d\times d}$ given in terms of \eqref{eq:func_int} by 
\begin{equation}\label{eq:cov}
G(t_1,t_2) = \Cov_P(\Phi_{t_1},\Phi_{t_2}) = P\Phi_{t_1}\Phi_{t_2}^\top - P\Phi_{t_1} P\Phi_{t_2}^\top.
\end{equation}
\end{lemma}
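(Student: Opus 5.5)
Because the limit is finite-dimensional in $k$, I will reduce to the scalar classes $\mathcal{F}_k=\{e_k^\top\Phi_t : t\in[0,\infty)\}$, $k=1,\dots,d$. The plan is to show each $\mathcal{F}_k$ is $P$-Donsker. Weak convergence of $n^{1/2}(P_n-P)$ in $\ell^\infty(\cup_k\mathcal{F}_k)$ then yields weak convergence of $n^{1/2}\Delta_n=-n^{1/2}(P_n-P)\Phi_{\cdot}$ in $L^\infty([0,\infty);\R^d)$. The covariance is read off directly: $\Cov\{e_k^\top\Phi_{t_1},e_l^\top\Phi_{t_2}\}$ is the $(k,l)$ entry of \eqref{eq:cov}, and the sign flip does not change the covariance.

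Write $\dot\Phi_t=\pdv*{\Phi_t}{t}=\psi_{\theta^\circ(t)}-H(\theta^\circ(t))\Phi_t$ and set $L=\int_0^\infty\norm{\norm{\dot\Phi_t}_{\ell^2}}_{L^2(P)}\dd{t}<\infty$. The first step is to show the class is totally bounded in $L^2(P)$ with an envelope in $L^2(P)$. Since $\Phi_0=0$, we have $\Phi_t=\int_0^t\dot\Phi_s\dd{s}$. Hence the envelope $F=\int_0^\infty\norm{\dot\Phi_s}_{\ell^2}\dd{s}$ satisfies $\norm{F}_{L^2(P)}\le L$ by Minkowski's integral inequality. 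This also gives the pointwise domination $|e_k^\top\Phi_t-e_k^\top\Phi_s|\le\int_s^t\norm{\dot\Phi_u}_{\ell^2}\dd{u}$. The key device is to reparametrize time by the arc-length-type function
\[\tau(t)=\int_0^t\norm{\norm{\dot\Phi_u}_{\ell^2}}_{L^2(P)}\dd{u}\in[0,L].\]

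The main step is bracketing. For $\epsilon>0$, I will partition $[0,\infty]$ into at most $\lceil L/\epsilon\rceil+1$ intervals $[t_{j-1},t_j)$ on each of which $\tau$ increases by at most $\epsilon$. The last interval is unbounded, which is exactly where finiteness of $L$ is used. On each interval I take the brackets
\[e_k^\top\Phi_{t_{j-1}}\pm\int_{t_{j-1}}^{t_j}\norm{\dot\Phi_u}_{\ell^2}\dd{u}.\]
The bracket width in $L^2(P)$ is at most $2\int_{t_{j-1}}^{t_j}\norm{\norm{\dot\Phi_u}}_{L^2(P)}\dd{u}\le2\epsilon$, again by Minkowski. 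Therefore $N_{[\,]}(2\epsilon,\mathcal{F}_k,L^2(P))\lesssim 1+L/\epsilon$. The bracketing entropy integral $\int_0^1\sqrt{\log(1+L/\epsilon)}\dd{\epsilon}$ is then finite, and the bracketing CLT (\citet[Theorem~2.5.6]{vVW2023weak}) gives the Donsker property.

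I expect the main obstacle to be measure-theoretic rather than the counting argument above. One issue is justifying that $t\mapsto\Phi_t(z)$ is absolutely continuous for $P$-a.e.\ $z$, with derivative given by the formula for $\dot\Phi_t$. This requires continuity or measurability of $s\mapsto\psi_{\theta^\circ(s)}(z)$, possibly only almost everywhere in $s$ for nonsmooth losses. The other issue is Fubini-type interchanges so that Minkowski applies to the Bochner/Lebesgue integral. I would first handle these with Tonelli on nonnegative integrands. If $\Phi_t$ fails to be absolutely continuous in $t$ for some $z$, I would instead work with the $L^2(P)$-valued curve $t\mapsto\Phi_t$. Bounding brackets via the curve's total variation needs pointwise control, so the a.e.\ absolute continuity route is preferred. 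Finally, I would note that the limit has continuous sample paths in the intrinsic semimetric, which extend to $t=\infty$, so the limit is tight in $L^\infty([0,\infty);\R^d)$.
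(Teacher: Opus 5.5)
Your proposal is correct and follows essentially the paper's route. The paper reduces to scalar projections $v^\top\Delta_n$ via the Cram\'er--Wold device, where you instead take the finite union of the coordinate classes $\mathcal{F}_k$ (both are valid). It then invokes Lemma~\ref{lem:uclt}, whose proof uses exactly your construction: an arc-length partition with brackets $\varphi_{t_{j-1}}\pm\int_{t_{j-1}}^{t_j}\lvert\partial_u\varphi_u\rvert\,\mathrm{d}u$, giving $N_{[\,]}(\varepsilon)\le\lceil 2\ell/\varepsilon\rceil$, followed by the bracketing Donsker theorem.
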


To apply Lemma~\ref{lem:lead}, it remains to verify that the sensitivity path $(\Phi_t)_{t\geq0}$ has finite $L^2(P)$-arc length. We give sufficient conditions in the rest of this section. 
The key observation is that $(\Phi_t)_{t\geq0}$ itself satisfies a linear evolution equation:
\begin{equation}\label{eq:ode-func_int}
\pdv{t}\Phi_t = - H(\theta^\circ(t)) \Phi_t + \psi_{\theta^\circ(t)} ,\quad \Phi_0=0_d.
\end{equation}
Indeed, differentiating \eqref{eq:func_int} and using \eqref{eq:transition_matrix} gives,
\[ \pdv{t}\Phi_t = \Pi(t,t) \psi_{\theta^\circ(t)} + \int_0^t \pdv{t}\Pi(t,s) \psi_{\theta^\circ(s)} \dd{s} = \psi_{\theta^\circ(t)} - H(\theta^\circ(t)) \int_0^t \Pi(t,s) \psi_{\theta^\circ(s)} \dd{s}. \]

The assumptions below control the variability of the population gradient flow $\theta^\circ$ and the sensitivity path $(\Phi_t)_{t\ge0}$. 
They permit certain non-convex landscapes for the criterion function $\theta \mapsto m_\theta$ and do not require pointwise differentiability of $\theta \mapsto \psi_\theta$. Instead, the regularity conditions are placed only on the population gradient field $\Psi(\theta) = P\psi_\theta$, whose differentiability and local smoothness govern the evolution of the population flow. 
This framework accommodates models with non-smooth or non-convex empirical objectives, including phase retrieval and quantile regression in Example~\ref{ex:glr}.

\begin{assumption}\label{asm:converge}
The population gradient flow $\theta^\circ$ in \eqref{eq:gradflow_pop} converges exponentially to a point $\theta_* \in \R^d$ with $\Psi(\theta_*) = 0_d$, i.e., there exist some positive constants $C_0$ and $\mu$ such that 
\[ \norm{\theta^\circ(t)-\theta_*}_{\ell^2} \le C_0 \exp(-\mu t) ,\quad \forall t\in[0,\infty) .\]
\end{assumption}
Exponential convergence of gradient flows and gradient-based algorithms is a classical topic in optimization theory \citep[see, e.g.,][]{dello2024local,weissmann2025almost}. One common sufficient condition is a local gradient dominance condition, also known as the local \citeauthor{polyak1963gradient}--\citeauthor{lojasiewicz1963propriete} inequality: For each 
$\theta_\star \in \R^d$ with $\Psi(\theta_\star)=0_d$, there is a neighborhood $\Theta_\star$ of $\theta_\star$ and a constant $c(\theta_\star)>0$ such that 
\begin{equation}\label{eq:PL}
\norm{\Psi(\theta)}_{\ell^2} \ge c(\theta_\star) \abs{M(\theta)-M(\theta_\star)}^{1/2} ,\quad \forall\theta\in\Theta_\star \subset \R^d .
\end{equation}
If $M - (\lambda/2)\norm{\cdot}_{\ell^2}^2$ is convex for some constant $\lambda > 0$, then \eqref{eq:PL} is satisfied with $c \equiv (2\lambda)^{1/2}$~\citep{karimi2016linear}.
More generally, \eqref{eq:PL} has been shown to hold in diverse non-convex settings, including deep neural networks with analytic activation functions \citep[Remark~2.3]{weissmann2025almost}.
Together with boundedness of $\theta^\circ$, \eqref{eq:PL} implies Assumption~\ref{asm:converge} follows with $\mu = c^2(\theta_*)/2$ from \L{}ojasiewicz's theorem \citep[Theorem~10.1.6 combined with Lemma~2.1.4]{haraux2015convergence}.

We next impose regularity conditions on the gradients and Hessian matrices that arise in the evolution of the gradient flow. Let \[\mathcal{B}(\theta_*,r) = \{\theta\in\R^d : \norm{\theta-\theta_*}_{\ell^2} \le r\},\] denote the Euclidean ball of radius $r\in[0,\infty)$ centered at $\theta_*$.

\begin{assumption}\label{asm:grad-Lip}
The subgradient function $\psi_\theta = \pdv*{m_\theta}{\theta}$ is square integrable in the sense that $\norm{\psi_\theta}_{\ell^2} \in L^2(P)$ for any $\theta\in\R^d$. 
Moreover, for any $r\in[0,\infty)$, there exists an envelop function $\dot{\psi}_r \in L^2(P)$ such that \[\norm{\psi_{\theta_1}-\psi_{\theta_2}}_{\ell^2} \le \norm{\theta_1-\theta_2}_{\ell^2} \dot{\psi}_r ,\quad \forall\theta_1,\theta_2\in \mathcal{B}(\theta_*,r) .\]
\end{assumption}

\begin{assumption}\label{asm:Hess-Lip}
The matrix-valued function $H = \nabla \Psi$ is locally Lipschitz continuous, i.e., for any $r\in[0,\infty)$, there exists some constant $L(r) > 0$ such that \[\normop{H(\theta_1)-H(\theta_2)} \le L(r) \norm{\theta_1-\theta_2}_{\ell^2} ,\quad \forall\theta_1,\theta_2\in \mathcal{B}(\theta_*,r) .\]
\end{assumption}

\begin{assumption}\label{asm:coercive}
Let $\lambda^\circ(t)$ be the smallest eigenvalue of $H(\theta^\circ(t))$. 
There exist constants $\lambda_* > 0$ and $t_* \in [0,\infty)$ such that \[\lambda^\circ(t) \ge \lambda_* ,\quad \forall t \in [t_*, \infty) .\]
Without loss of generality, we assume $\inf_{t\in[0,\infty)} \lambda^\circ(t) = \lambda_0 > -\infty$ and $\lambda_* \ge \lambda_0$.
\end{assumption}

Assumptions \ref{asm:grad-Lip}--\ref{asm:coercive} are standard stability and regularity conditions for the population dynamics. Assumption~\ref{asm:grad-Lip} controls the local variability of the gradients. If the pointwise Hessian $\ddot{m}_{\theta} = \pdv*{m_\theta}{\theta}{\theta^\top}$ exists, then the envelope function can be taken as \[\dot{\psi}_r = \sup_{\theta\in\mathcal{B}(\theta_*,r)} \normop{\ddot{m}_{\theta}}\] whenever this envelope belongs to $L^2(P)$. 
Assumption~\ref{asm:Hess-Lip} ensures that the curvature of the population landscape varies smoothly, allowing us to compare the dynamics \eqref{eq:ode-func_int} with a linear system defined by the constant coefficient matrix $H(\theta_*)$. In combination with Assumption~\ref{asm:converge}, Weyl's inequality shows that Assumption~\ref{asm:coercive} follows from positive definiteness of $H(\theta_*)$. Thus, Assumption~\ref{asm:coercive} formalizes the requirement that, after the flow has entered a neighborhood of $\theta_*$, the population dynamics become contractive. The next lemma shows how this eventual contraction is used.

\begin{lemma}\label{lem:trans_mat_norm}
Under Assumption~\ref{asm:coercive}, with $A_* = \exp\{(\lambda_*-\lambda_0)t_*\}/\lambda_*$, the matrix $\Pi(t,s)$ in \eqref{eq:transition_matrix} satisfies 
\[ \normop{\Pi(t,s)} \le A_*\lambda_*\exp\{-\lambda_*(t-s)\} ,\quad \forall t \ge s \ge 0 .\]
\end{lemma}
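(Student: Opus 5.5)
The bound will follow from a standard energy (Grönwall) estimate applied to the columns of $\Pi(t,s)$; the eventual contraction after $t_*$ is then combined with the bounded expansion on $[0,t_*]$. First fix $s\ge0$ and $v\in\R^d$, and set $x(t)=\Pi(t,s)v$ for $t\ge s$. By \eqref{eq:transition_matrix},
\[\dv{t}\norm{x(t)}_{\ell^2}^2 = -2x(t)^\top H(\theta^\circ(t))x(t) \le -2\lambda^\circ(t)\norm{x(t)}_{\ell^2}^2 .\]
Here we use that $H(\theta^\circ(t))$ is symmetric, as the Hessian of $M$; if symmetry were not available, we would use its symmetric part, whose smallest eigenvalue is what $\lambda^\circ$ should mean. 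Grönwall's inequality then gives
\[\norm{x(t)}_{\ell^2}\le \exp\Big\{-\int_s^t\lambda^\circ(u)\dd{u}\Big\}\norm{v}_{\ell^2}.\]
Taking the supremum over unit $v$ yields
\[\normop{\Pi(t,s)}\le\exp\Big\{-\int_s^t\lambda^\circ(u)\dd{u}\Big\}.\]

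Next, we bound the integral $\int_s^t\lambda^\circ(u)\dd{u}$ from below by $\lambda_*(t-s)-(\lambda_*-\lambda_0)t_*$. Write
\[\lambda^\circ(u)\ge\lambda_*-(\lambda_*-\lambda_0)\1\{u<t_*\}.\]
This holds because $\lambda^\circ\ge\lambda_*$ on $[t_*,\infty)$ and $\lambda^\circ\ge\lambda_0$ everywhere. Integrating over $[s,t]$ and using that the Lebesgue measure of $[s,t]\cap[0,t_*)$ is at most $t_*$, together with $\lambda_*-\lambda_0\ge0$, gives
\[\int_s^t\lambda^\circ(u)\dd{u}\ge\lambda_*(t-s)-(\lambda_*-\lambda_0)t_*.\]
Hence
\[\normop{\Pi(t,s)}\le\exp\{(\lambda_*-\lambda_0)t_*\}\exp\{-\lambda_*(t-s)\}=A_*\lambda_*\exp\{-\lambda_*(t-s)\}.\]

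The only delicate point is regularity. The map $u\mapsto\lambda^\circ(u)$ must be measurable, so that the integral makes sense, and $t\mapsto H(\theta^\circ(t))$ must be regular enough for \eqref{eq:transition_matrix} to have an absolutely continuous solution. Measurability follows from continuity of $\theta^\circ$ and Assumption~\ref{asm:Hess-Lip}, under which $H(\theta^\circ(\cdot))$ is continuous and eigenvalues depend continuously on the matrix. Alternatively, we could avoid integrating $\lambda^\circ$ altogether. In that version, we split $[s,t]$ at $t_*$ and apply the differential inequality with the constant rate $\lambda_0$ on $[s,t_*]$ and the constant rate $\lambda_*$ on $[t_*,t]$. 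This gives the same final constant and needs only the pointwise lower bounds on $\lambda^\circ$.
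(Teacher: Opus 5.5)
Your proof is correct and follows the paper's argument. The paper cites a logarithmic-norm bound (Söderlind) to get $\normop{\Pi(t,s)}\le\exp\{-\int_s^t\lambda^\circ(u)\dd{u}\}$, which your energy/Gr\"onwall computation derives directly, and it then bounds the exponent by splitting at $t_*$ exactly as you do, arriving at the same constant $\exp\{(\lambda_*-\lambda_0)t_*\}=A_*\lambda_*$.
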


We can now show the finite arc length of $(\Phi_t)_{t\in[0,\infty)}$. 
\begin{proposition}\label{prop:length_bound}
Under Assumptions \ref{asm:converge}--\ref{asm:coercive}, \[\int_0^\infty \norm{\norm{\pdv{\Phi_t}{t}}_{\ell^2}}_{L^2(P)} \dd{t} \le \bar{\ell},\] where 
\[ \bar{\ell} = (1 + \Lambda A_*) \{\lVert\dot{\psi}_{C_0}\rVert_{L^2(P)} + L(C_0) B_* /\lambda_*\} C_0 /\mu + B_* /\lambda_* ,\]
with
\[
\Lambda = \sup_{t\in[0,\infty)}\normop{H(\theta^\circ(t))}, \qquad A_* = \exp\{(\lambda_*-\lambda_0)t_*\}/\lambda_*, \qquad B_* = \norm{\norm{\psi_{\theta_*}}_{\ell^2}}_{L^2(P)}.
\]
\end{proposition}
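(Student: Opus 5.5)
The plan is to split $\pdv*{\Phi_t}{t}$ into an explicit constant-coefficient part, which produces exactly the term $B_*/\lambda_*$, and a remainder driven by a forcing term that decays like $\exp(-\mu t)$, which produces the term $(1+\Lambda A_*)KC_0/\mu$. Here and below
\[
K = \lVert\dot{\psi}_{C_0}\rVert_{L^2(P)} + L(C_0)B_*/\lambda_*, \qquad H_t = H(\theta^\circ(t)), \qquad H_* = H(\theta_*), \qquad \psi_* = \psi_{\theta_*}.
\]

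\textbf{Step 1: facts about $H_*$ and the trajectory.} By Assumption~\ref{asm:converge}, $\theta^\circ(t)\in\mathcal{B}(\theta_*,C_0)$ for all $t$, and $\theta^\circ(t)\to\theta_*$. By Assumption~\ref{asm:Hess-Lip}, $H_t\to H_*$, so Assumption~\ref{asm:coercive} and continuity of eigenvalues give $\lambda_{\min}(H_*)\ge\lambda_*$. Since $H$ is the Hessian of $M$, the matrix $H_*$ is symmetric, and therefore
\[
\normop{\exp(-tH_*)}\le\exp(-\lambda_* t).
\]

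\textbf{Step 2: the comparison path.} Define $\Phi^*_t=\int_0^t\exp\{-(t-s)H_*\}\psi_*\dd{s}$. Then $\pdv*{\Phi^*_t}{t}=\exp(-tH_*)\psi_*$. Applying the $L^2(P)$ norm pointwise in $t$ gives
\[
\int_0^\infty\norm{\norm{\pdv*{\Phi^*_t}{t}}_{\ell^2}}_{L^2(P)}\dd{t}\le\int_0^\infty e^{-\lambda_*t}B_*\dd{t}=B_*/\lambda_*.
\]
Minkowski's integral inequality gives the uniform bound $\norm{\norm{\Phi^*_t}_{\ell^2}}_{L^2(P)}\le B_*/\lambda_*$.

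\textbf{Step 3: the difference equation and its forcing.} Set $D_t=\Phi_t-\Phi^*_t$. Subtracting the equation $\pdv*{\Phi^*_t}{t}=-H_*\Phi^*_t+\psi_*$ from \eqref{eq:ode-func_int} gives
\[
\pdv{t}D_t=-H_tD_t+f_t,\qquad f_t=(\psi_{\theta^\circ(t)}-\psi_*)-(H_t-H_*)\Phi^*_t,\qquad D_0=0_d.
\]
Bound the forcing pointwise in $t$. Assumption~\ref{asm:grad-Lip} on $\mathcal{B}(\theta_*,C_0)$ handles the first piece and Assumption~\ref{asm:Hess-Lip} handles the second; together with Assumption~\ref{asm:converge} and the bound from Step~2 this gives
\[
\norm{\norm{f_t}_{\ell^2}}_{L^2(P)}\le\big\{\lVert\dot\psi_{C_0}\rVert_{L^2(P)}+L(C_0)B_*/\lambda_*\big\}C_0e^{-\mu t}=KC_0e^{-\mu t}.
\]

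\textbf{Step 4: the arc length of $D$.} Variation of constants gives $D_t=\int_0^t\Pi(t,s)f_s\dd{s}$, hence $\pdv*{D_t}{t}=f_t-H_t\int_0^t\Pi(t,s)f_s\dd{s}$. Integrating the first term over $t$ contributes $KC_0/\mu$. For the second term, use $\normop{H_t}\le\Lambda$, Lemma~\ref{lem:trans_mat_norm}, and Minkowski's integral inequality to move the $L^2(P)$ norm inside the time integrals. This yields
\[
\Lambda A_*\lambda_*KC_0\int_0^\infty\!\!\int_0^t e^{-\lambda_*(t-s)}e^{-\mu s}\dd{s}\dd{t}.
\]
Swapping the order of integration (Fubini) makes the double integral equal to $1/(\lambda_*\mu)$, so this term is $\Lambda A_*KC_0/\mu$.

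\textbf{Step 5: conclusion.} Apply the triangle inequality to $\pdv*{\Phi_t}{t}=\pdv*{\Phi^*_t}{t}+\pdv*{D_t}{t}$ and add the bounds from Steps~2 and~4; this gives exactly $\bar\ell$.

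\textbf{Main obstacle.} The key design choice is the comparison path $\Phi^*$. If one instead compares with the stationary point $H_*^{-1}\psi_*$ and propagates with $\Pi(t,0)$, the contribution of $\psi_*$ comes out as $\Lambda A_*B_*/\lambda_*$ rather than $B_*/\lambda_*$. Evolving the reference path with the exact symmetric semigroup $\exp(-tH_*)$ avoids this, because its contraction constant is one. The remaining technical points are routine: justifying the exchange of $L^2(P)$ norms with time integrals, and the absolute continuity of $t\mapsto\Phi_t$ in $L^2(P)$, which follows from the Lipschitz envelope in Assumption~\ref{asm:grad-Lip}.
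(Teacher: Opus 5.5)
Your proof is correct and is essentially the paper's argument. Both compare against the frozen-coefficient semigroup $\exp\{-(t-s)H(\theta_*)\}$ to isolate the $B_*/\lambda_*$ term, and both bound the gradient-difference and Hessian-difference remainders via Lemma~\ref{lem:trans_mat_norm}, Assumptions~\ref{asm:converge}--\ref{asm:Hess-Lip}, and Fubini, with identical term-by-term constants. The only difference is packaging: you compare paths ($\Phi_t=\Phi^*_t+D_t$, with one forcing $f_t$ absorbing both errors), whereas the paper splits $\pdv*{\Phi_t}{t}$ and controls the operator $I_d-H(\theta^\circ(t))\int_0^t\Pi(t,s)\dd{s}$ through a Duhamel identity written in the reverse order ($\Pi_*$ outside, $\Pi$ inside).
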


\subsection{Main results}
\label{sec:main}
In the preceding subsection we showed that the leading term $n^{1/2}\Delta_n$ admits uniform Gaussian asymptotics. We now show that it also dominates the full gradient flow error.
Recall that 
\[ \norm{\theta}_{L^\infty([0,\infty))} = \sup_{t\in[0,\infty)} \norm{\theta(t)}_{\ell^2}\]
for any map $\theta : [0,\infty) \to \R^d$.
By Proposition~\ref{prop:length_bound}, the leading term \eqref{eq:lead} satisfies 
\[\begin{aligned}
\E\{\norm{\Delta_n}_{L^\infty([0,\infty))}\} 
&\le \int_0^\infty \E\bigg\{\norm{(P_n-P)\pdv{t}\Phi_t}_{\ell^2}\bigg\} \dd{t} \\
&\le \int_0^\infty n^{-1/2} \norm{\norm{\pdv{t}\Phi_t}_{\ell^2}}_{L^2(P)} \dd{t} 
\le n^{-1/2} \bar{\ell} .
\end{aligned}\]

\begin{theorem}[Time-uniform linearization of the empirical gradient flow]\label{thm:linearization}
Suppose that the matrix in \eqref{eq:transition_matrix} satisfies 
\begin{equation}\label{asm:transition_matrix}
\sup_{t\in[0,\infty)} \int_0^t \normop{\Pi(t,s)} \dd{s} \le A_* < \infty.
\end{equation}
Suppose further that the higher-order terms \eqref{eq:doubly-centered} and \eqref{eq:Bregman_div} satisfy 
\begin{equation}\label{asm:doubly-centered}
\norm{R_n(t)}_{\ell^2} \le \eta_n \left\{n^{-1/2} + \norm{\hat{\theta}(t)-\theta^\circ(t)}_{\ell^2}\right\} ,\quad \forall t\in[0,\infty),
\end{equation}
\begin{equation}\label{asm:Bregman_div}
\norm{D_\Psi\big(\hat{\theta}(t),\theta^\circ(t)\big)}_{\ell^2} \le \frac{L_n}{2} \norm{\hat{\theta}(t)-\theta^\circ(t)}_{\ell^2}^2 ,\quad \forall t\in[0,\infty),
\end{equation}
for some nonnegative random variables $\eta_n$ and $L_n$. 
Put $U_n = n^{1/2}\norm{\Delta_n}_{L^\infty([0,\infty))}$ and define the event
\[ \Omega_n = \{2 A_*\eta_n + 4 n^{-1/2} A_* L_n (U_n + A_* \eta_n) \le 1\} .\]
Then the following holds:
\begin{enumerate}
\item[(i)] On the event $\Omega_n$, 
\[ \norm{\hat{\theta}-\theta^\circ-\Delta_n}_{L^\infty([0,\infty))} \le n^{-1/2} A_*\eta_n (1+2U_n+2A_*\eta_n) + 2 n^{-1} A_* L_n (U_n+A_*\eta_n)^2 .\]
\item[(ii)] If $\eta_n = o_\Pr(1)$, $L_n = o_\Pr(n^{1/2})$, and $U_n = \mathcal{O}_\Pr(1)$, then 
$\Pr(\Omega_n) \to 1$ as $n\to\infty$, and 
\[ n^{1/2}(\hat{\theta}-\theta^\circ) = n^{1/2}\Delta_n + \varepsilon_n \quad\text{with}\quad \norm{\varepsilon_n}_{L^\infty([0,\infty))} = o_\Pr(1) .\]
\end{enumerate}
\end{theorem}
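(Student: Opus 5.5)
Starting from Lemma~\ref{lem:decomposition}, set $e(t) = \norm{\hat{\theta}(t)-\theta^\circ(t)}_{\ell^2}$ and $E(T) = \sup_{t\in[0,T]} e(t)$ for finite $T$. This quantity is finite because both flows are continuous. Taking norms in the decomposition and using \eqref{asm:transition_matrix}, \eqref{asm:doubly-centered} and \eqref{asm:Bregman_div}, I obtain for every $t\le T$
\[
e(t) \le n^{-1/2}U_n + A_*\eta_n\{n^{-1/2} + E(T)\} + \frac{A_* L_n}{2} E(T)^2 .
\]
Hence $E(T)$ satisfies the quadratic self-bounding inequality
\[
x \le a + b x + c x^2,
\]
with $a = n^{-1/2}(U_n + A_*\eta_n)$, $b = A_*\eta_n$ and $c = A_*L_n/2$. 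The remainder $\hat{\theta}-\theta^\circ-\Delta_n$ is bounded by the same expression without the $n^{-1/2}U_n$ term, namely $A_*\eta_n\{n^{-1/2}+E(T)\} + (A_*L_n/2)E(T)^2$. Once $E(T)$ is controlled, part (i) follows by substituting the bound.

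The key step is a bootstrap (continuity) argument that turns the quadratic inequality into the bound $E(T) \le 2a$ uniformly in $T$. On $\Omega_n$ we have $2b\le 1$ and $4ac\le 1$; the latter is exactly $4n^{-1/2}A_*L_n(U_n+A_*\eta_n)\le 1$ after using $2c = A_*L_n$, and the event gives both conditions with some room to spare. Define $T^\dagger = \sup\{T : E(T) \le 2a\}$. At $T=0$ we have $E(0)=0$, so the set is nonempty whenever $a>0$. If $a=0$, then $U_n=0$ and $\eta_n=0$, and the inequality becomes $x\le cx^2$; I would treat this degenerate case separately, or argue with $2a+\epsilon$ and let $\epsilon\to0$.

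For $T$ in this set, $x = E(T)\le 2a$ gives
\[
x \le a + bx + c(2a)x \le a + \tfrac12 x + 2ac\,x .
\]
The condition must close this strictly, and I need to check the constants. The event actually states $2b + 4ac \le 1$, hence $b + 2ac \le 1/2$ and $x\le a+x/2$, so $x\le 2a$. To get strict improvement I would use $2a+\epsilon$ in place of $2a$. Continuity of $E$ in $T$ then shows the set is open and closed in $[0,\infty)$, so $T^\dagger=\infty$. Thus $\sup_t e(t)\le 2n^{-1/2}(U_n+A_*\eta_n)$.

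Plugging this into the remainder bound gives
\[
A_*\eta_n n^{-1/2}\{1 + 2U_n + 2A_*\eta_n\} + \frac{A_*L_n}{2}\cdot 4n^{-1}(U_n+A_*\eta_n)^2 ,
\]
which is exactly (i).

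For (ii), I would show that both terms in the condition defining $\Omega_n$ are $o_\Pr(1)$. First, $\eta_n=o_\Pr(1)$. Second, $n^{-1/2}L_n = o_\Pr(1)$ while $U_n + A_*\eta_n = \mathcal{O}_\Pr(1)$, so their product is $o_\Pr(1)$. Therefore $\Pr(\Omega_n)\to1$.

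Multiplying the bound in (i) by $n^{1/2}$ gives
\[
A_*\eta_n\,\mathcal{O}_\Pr(1) + 2n^{-1/2}A_*L_n\,\mathcal{O}_\Pr(1) = o_\Pr(1)
\]
on $\Omega_n$. Since $\Omega_n^c$ has vanishing probability, this yields $\norm{\varepsilon_n}_{L^\infty([0,\infty))} = o_\Pr(1)$.

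The main obstacle I expect is making the continuity argument airtight over the infinite horizon. It requires continuity of $t\mapsto e(t)$, and I need the quadratic inequality to close strictly. I would handle the latter with the $\epsilon$-slack device rather than relying on the exact constants.
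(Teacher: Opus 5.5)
Your proposal is correct and follows essentially the paper's proof: the same self-bounding inequality for the running supremum $E(T)$, closed by a continuity/bootstrap argument. The only difference is how strict improvement is obtained. The paper uses a gap, $E(T)\le 4a\Rightarrow E(T)\le 2a$, followed by the intermediate value theorem at level $3a$. You use $2a+\epsilon$ slack instead, which also covers the degenerate case $a=0$ that the paper's normalization by $U_n+A_*\eta_n$ glosses over.

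One bookkeeping correction: $\Omega_n$ is $\{2b+8ac\le 1\}$, not $\{2b+4ac\le 1\}$. This extra factor of two, which gives $4ac\le 1/2$, is exactly what makes your $\epsilon$-slack close strictly. Under $2b+4ac\le 1$ alone, the boundary case $b=0$, $4ac=1$ would defeat the argument.
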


The proof of Theorem~\ref{thm:linearization} is based on a bootstrap argument
for differential equations \citep[Principle~1.23]{tao2006nonlinear}, in which
\textit{a priori} estimates are successively sharpened into improved ones. Conditions~\eqref{asm:transition_matrix},
\eqref{asm:doubly-centered}, and~\eqref{asm:Bregman_div} of the theorem are tailored to drive this argument and hold under primitive assumptions: First, under Assumption~\ref{asm:coercive}, Lemma~\ref{lem:trans_mat_norm} implies condition \eqref{asm:transition_matrix} with
\[A_* = \exp\{(\lambda_*-\lambda_0)t_*\}/\lambda_*.\]
Second, condition~\eqref{asm:doubly-centered} is analogous to standard sufficient conditions for the asymptotic normality of Z-estimators \citep[Theorem~3.3.1]{vVW2023weak}.
When the pointwise Hessian $\ddot{m}_\theta = \pdv*{m_\theta}{\theta}{\theta^\top}$ exists and is continuous in $\theta$, the remainder in
\eqref{eq:doubly-centered} admits the representation
\[\begin{aligned}
R_n(t)
&= (P_n-P)\{\psi_{\hat{\theta}(t)}-\psi_{\theta^\circ(t)}\} \\
&= \left\{\int_0^1 (P_n-P)\ddot{m}_{\alpha\hat{\theta}(t)+(1-\alpha)\theta^\circ(t)} \dd{\alpha}\right\}
\left\{\hat{\theta}(t)-\theta^\circ(t)\right\} .
\end{aligned}\]
Hence, if $\hat{\theta}$ and $\theta^\circ$ lie in
$\mathcal{B}(\theta_*,r)$ with probability tending to one for some constant $r>0$, we may take
\[\eta_n = \sup_{\theta\in\mathcal{B}(\theta_*,r)}\normop{(P_n-P)\ddot{m}_\theta}\]
in~\eqref{asm:doubly-centered}. Under sub-Gaussian and sub-exponential conditions on the gradient and Hessian fluctuations, together with an integrability condition on the Lipschitz constant of
$\theta\mapsto\ddot{m}_\theta(Z_1)$, \citet[Theorem~1]{mei2018landscape} gives
\begin{equation}\label{eq:Hess-rate}
\sup_{\theta\in\mathcal{B}(\theta_*,r)} \normop{(P_n-P)\ddot{m}_\theta} = \mathcal{O}_\Pr\left\{\Big(\frac{\log n}{n}\Big)^{1/2}\right\} .
\end{equation}
Third, condition~\eqref{asm:Bregman_div} follows with $L_n = L(r)$ from Assumption~\ref{asm:Hess-Lip}. Indeed, for any $\theta,\beta \in \mathcal{B}(\theta_*,r)$, by the mean value theorem, 
\[\begin{aligned}
\norm{D_\Psi(\theta,\beta)}_{\ell^2}
&= \norm{\int_0^1 \{H(\alpha\theta+(1-\alpha)\beta) - H(\beta)\}(\theta-\beta) \dd{\alpha}}_{\ell^2} \\
&\le \int_0^1 \normop{H(\alpha\theta+(1-\alpha)\beta) - H(\beta)}\,\norm{\theta-\beta}_{\ell^2} \dd{\alpha} \\
&\le \int_0^1 L(r)\,\alpha\,\norm{\theta-\beta}_{\ell^2}^2 \dd{\alpha}
= \frac{L(r)}{2}\norm{\theta-\beta}_{\ell^2}^2 .
\end{aligned}\]

Combining the Gaussian asymptotics for $n^{1/2}\Delta_n$ with the linearization bound in Theorem~\ref{thm:linearization} gives the following
uniform central limit theorem.

\begin{theorem}[Time-uniform central limit theorem for gradient flows]\label{thm:Gaussian}
Under the assumptions of Theorem~\ref{thm:linearization} and Proposition~\ref{prop:length_bound}, 
$n^{1/2}(\hat{\theta}-\theta^\circ)$ converges weakly in $L^\infty([0, \infty); \R^d)$ to a zero-mean Gaussian process with covariance function $G$ given by \eqref{eq:cov}.
\end{theorem}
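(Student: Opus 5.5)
The plan is to combine Lemma~\ref{lem:lead}, Proposition~\ref{prop:length_bound}, Theorem~\ref{thm:linearization} and Slutsky's lemma in $L^\infty([0,\infty);\R^d)$.

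The first step treats the leading term. Under Assumptions~\ref{asm:converge}--\ref{asm:coercive}, Proposition~\ref{prop:length_bound} gives
\[\int_0^\infty \norm{\norm{\partial_t\Phi_t}_{\ell^2}}_{L^2(P)}\dd{t}\le\bar\ell<\infty.\]
Lemma~\ref{lem:lead} then shows that $n^{1/2}\Delta_n$ converges weakly in $L^\infty([0,\infty);\R^d)$ to a tight zero-mean Gaussian process $\mathbb{G}$ with covariance $G$ from \eqref{eq:cov}. The sign is harmless, since $\Delta_n=-(P_n-P)\Phi_t$ and $G$ is invariant under $\Phi\mapsto-\Phi$. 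The display preceding Theorem~\ref{thm:linearization} gives $\E U_n\le\bar\ell$. By Markov's inequality, $U_n=n^{1/2}\norm{\Delta_n}_{L^\infty([0,\infty))}=\mathcal{O}_\Pr(1)$; this also follows from weak convergence and the continuous mapping theorem.

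The second step controls the remainder. I read ``the assumptions of Theorem~\ref{thm:linearization}'' as including the rate conditions of part (ii), namely $\eta_n=o_\Pr(1)$ and $L_n=o_\Pr(n^{1/2})$. If instead only \eqref{asm:transition_matrix}--\eqref{asm:Bregman_div} are intended, these rates would have to be supplied through the primitive route discussed after that theorem: bound $\eta_n$ by \eqref{eq:Hess-rate}, take $L_n=L(r)$, and use Lemma~\ref{lem:trans_mat_norm} for $A_*$.

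Together with $U_n=\mathcal{O}_\Pr(1)$ from the first step, Theorem~\ref{thm:linearization}(ii) yields
\[n^{1/2}(\hat\theta-\theta^\circ)=n^{1/2}\Delta_n+\varepsilon_n,\qquad \norm{\varepsilon_n}_{L^\infty([0,\infty))}=o_\Pr(1).\]
One can also verify this directly from the bound in part (i): multiplying it by $n^{1/2}$ gives
\[A_*\eta_n(1+2U_n+2A_*\eta_n)+2n^{-1/2}A_*L_n(U_n+A_*\eta_n)^2,\]
which is $o_\Pr(1)$, and $\Pr(\Omega_n)\to1$.

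The final step is Slutsky's lemma in the nonseparable space $L^\infty$, in the Hoffmann-J\o rgensen sense of weak convergence. The relevant fact is that if $X_n\rightsquigarrow \mathbb{G}$ with $\mathbb{G}$ tight and $\norm{Y_n-X_n}^*\to0$ in outer probability, then $Y_n\rightsquigarrow\mathbb{G}$ \citep[Lemma~1.10.2]{vVW2023weak}. Applying it with $X_n=n^{1/2}\Delta_n$ and $Y_n=n^{1/2}(\hat\theta-\theta^\circ)$ concludes the proof.

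The only real obstacle is measurability. The $o_\Pr$ statement should be understood in outer probability, and the limit $\mathbb{G}$ must be tight, i.e.\ concentrated on a separable set. Tightness of the limit is part of the conclusion of Lemma~\ref{lem:lead}: the Donsker property produces sample paths that are uniformly continuous with respect to the $L^2(P)$ semimetric, for which the index set is totally bounded thanks to the finite arc length. Beyond this, everything reduces to the earlier results.
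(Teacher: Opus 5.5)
Your proposal is correct and follows essentially the same route as the paper's proof. The paper also uses Theorem~\ref{thm:linearization} together with Slutsky's lemma to reduce the claim to weak convergence of $n^{1/2}\Delta_n$, and obtains that convergence from Lemma~\ref{lem:lead} combined with the arc-length bound of Proposition~\ref{prop:length_bound}; your remarks that $U_n=\mathcal{O}_\Pr(1)$ follows from $\E U_n\le\bar\ell$, that the rate conditions of part (ii) are among the hypotheses, and that the limit is tight just make explicit what the paper's two-line proof leaves implicit.
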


Theorem~\ref{thm:Gaussian} gives a time-uniform asymptotic description of the statistical fluctuation of the empirical gradient flow around its population counterpart. 
The central observation is that the limiting process is governed not by the full ambient parameter space, but by the one-dimensional path traced out by the population gradient flow. This pathwise structure is what makes classical empirical process tools applicable to the dynamic optimization setting.

The theorem also yields inference at fixed and data-dependent stopping times. Let $\hat{t}$ be a possibly random time and let
$T\in[0,\infty)$ be deterministic. If $\hat{t}\to T$ in probability and the limiting Gaussian process $W$ is sample-continuous at $T$, then
\[n^{1/2}\{\hat{\theta}(\hat{t})-\theta^\circ(\hat{t})\} \xrightarrow{d} \N(0_d,G(T,T)).\]
If, in addition,
\[n^{1/2}\,\|\theta^\circ(\hat{t})-\theta^\circ(T)\|_{\ell^2} = o_{\Pr}(1),\]
then
\[n^{1/2}\{\hat{\theta}(\hat{t})-\theta^\circ(T)\} \xrightarrow{d} \N(0_d,G(T,T)).\]
Thus, the statistical variability of the stopped algorithmic output is governed by the covariance of the limiting Gaussian process at the effective stopping time.

The same argument extends to diverging stopping times by adjoining an endpoint at infinity. If $G(\infty,\infty):=\lim_{t\to\infty}G(t,t)$ exists, and if the limiting process is stochastically equicontinuous at infinity,
then the preceding conclusion extends to $\hat{t}\to\infty$ in probability, with $T=\infty$. This is the case relevant for stopping rules whose effective time horizon diverges with $n$.

Finally, if $\hat{G}$ is uniformly consistent for $G$ on $[0,\infty)\times[0,\infty)$, then $\hat{G}(\hat{t},\hat{t})$ consistently estimates the asymptotic covariance at the effective stopping time. Whenever centering by $\theta^\circ(T)$ is valid, this gives the large-sample approximation $\hat{\theta}(\hat{t}) \approx \N\big(\theta^\circ(T), n^{-1}\hat{G}(\hat{t},\hat{t})\big)$.

\section{Algorithmic covariance estimation and inference}
\label{sec:cov_est}
In this section we leverage the results from Section~\ref{sec:converge} to develop practical procedures for statistical inference along the gradient flow. The main task is to estimate, uniformly over time, the covariance function $G$ of the limiting Gaussian process. To this end, we introduce an auxiliary sensitivity dynamics that is solved jointly with the empirical gradient flow. The resulting covariance estimator is algorithm-aware: it evolves together with the optimization trajectory, rather than being computed only after optimization has terminated.

\subsection{Proposed methodology}
\label{sec:method}
Let $e\in\R^d$ be fixed. Theorem~\ref{thm:Gaussian} gives the asymptotic distribution of $n^{1/2}\,e^\top\{\hat{\theta}(t)-\theta^\circ(t)\}$ uniformly over $t\geq0$. Thus, for a deterministic or data-dependent stopping time $\hat{t}$, a natural Wald-type confidence interval for
$e^\top\theta^\circ(\hat{t})$ is
\begin{equation}\label{eq:CI-general}
e^\top\hat{\theta}(\hat{t}) \pm q_\alpha n^{-1/2} \{e^\top\hat{G}_n(\hat{t},\hat{t})e\}^{1/2} ,
\end{equation}
where $q_\alpha$ denotes the $(1-\alpha/2)$-quantile of the standard normal distribution and $\hat{G}_n$ is an estimator of $G$. 
Taking $e=e_k$, $k=1,\dots,d$, gives coordinatewise intervals
\begin{equation}\label{eq:CI}
e_k^\top\hat{\theta}(\hat{t}) \pm q_\alpha n^{-1/2}\{e_k^\top\hat{G}_n(\hat{t},\hat{t})e_k\}^{1/2} .
\end{equation}
The interval \eqref{eq:CI-general} is naturally constructed for $e^\top\theta^\circ(\hat{t})$. It is also valid for $e^\top\theta_*$ when the population early-stopping bias is negligible at the $n^{-1/2}$-scale, namely when
\[n^{1/2}\,|e^\top\{\theta^\circ(\hat{t})-\theta_*\}| = o_{\Pr}(1).\]
Thus, inference for the limiting target $\theta_*$ requires two ingredients: the time-uniform approximation of the stochastic fluctuation $\hat{\theta}-\theta^\circ$, provided by Theorem~\ref{thm:Gaussian}, and sufficiently small deterministic population bias along the population flow, implied by Assumption~\ref{asm:converge}.

It remains to construct an estimator $\hat{G}_n$ that is uniformly consistent for the covariance function $G$. Recall from \eqref{eq:cov} that
\[G(t_1,t_2) = \Cov_P(\Phi_{t_1},\Phi_{t_2}) = P\Phi_{t_1}\Phi_{t_2}^\top - P\Phi_{t_1} P\Phi_{t_2}^\top.\]
We define the oracle empirical covariance 
\[ G_n(t_1,t_2) = \Cov_{P_n}(\Phi_{t_1},\Phi_{t_2}) = P_n\Phi_{t_1}\Phi_{t_2}^\top - P_n\Phi_{t_1} P_n\Phi_{t_2}^\top ,\quad t_1,t_2\in[0,\infty) .\]
This oracle estimator is infeasible because the sensitivity functions $\Phi_t$ are unknown. Even in ordinary least squares they depend on population quantities such as $\Sigma$ and $\xi$; in more general models, they may not even admit a closed-form expression.
However, $(\Phi_t)_{t\in[0,\infty)}$ is characterized as the solution of the differnetial equation \eqref{eq:ode-func_int}. This suggests estimating $\Phi_t$ through the corresponding empirical dynamics. Define $\hat{\Phi}_t$ as the solution of the differential equation 
\begin{equation}\label{eq:ode-empirical}
\pdv{t}\hat{\Phi}_t = - \hat{H}(\hat{\theta}(t)) \hat{\Phi}_t + \psi_{\hat{\theta}(t)} ,\quad \hat{\Phi}_0=0_d ,
\end{equation}
where $\hat{H}$ is a consistent estimator for $H$, e.g., $\hat{H}(\theta) = P_n \ddot{m}_\theta$ in view of \eqref{eq:Hess-rate}. 
This mirrors the dynamics of $\Phi_t$, with population quantities replaced by their empirical counterparts, and can be computed alongside the gradient flow $\hat{\theta}$ with moderate additional cost.
Evaluating the $\hat{\Phi}_t$'s at the observed data points, we then propose an estimator $\hat{G}_n$ defined by 
\[ \hat{G}_n(t_1,t_2) = P_n\hat{\Phi}_{t_1}\hat{\Phi}_{t_2}^\top - P_n\hat{\Phi}_{t_1} P_n\hat{\Phi}_{t_2}^\top ,\quad t_1,t_2\in[0,\infty) ,\]
which inherits the same plug-in structure as the oracle estimator $G_n$. 
Notably, $\hat{G}_n$ can be updated recursively along the trajectory, allowing simultaneous optimization and covariance estimation while avoiding matrix inversion, resampling, and sample splitting.

\subsection{Theoretical guarantee}
\label{sec:theory}
Next, we justify the proposed inference procedure by establishing that $\hat{G}_n$ is uniformly consistent for $G$ over $[0,\infty)\times[0,\infty)$. 

\begin{theorem}\label{thm:cov}
Suppose that the assumptions of Theorem~\ref{thm:Gaussian} hold. Moreover, let Assumption~\ref{asm:grad-Lip} be strengthened such that $\norm{\psi_\theta}_{\ell^2}$ and $\dot{\psi}_r$ belong to $L^4(P)$. 
Consider the plug-in estimator for the population Hessian $\hat{H}(\theta) = P_n \ddot{m}_\theta$ and suppose that \eqref{eq:Hess-rate} holds.
Then the proposed covariance estimator $\hat{G}_n$ satisfies 
\[ \sup_{t_1,t_2\in[0,\infty)} \normop{\hat{G}_n(t_1,t_2) - G(t_1,t_2)} = \mathcal{O}_\Pr\left\{\Big(\frac{\log n}{n}\Big)^{1/2}\right\} .\]
\end{theorem}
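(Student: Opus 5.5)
We split the error into an oracle part and a plug-in part,
\[ \hat{G}_n - G = (G_n - G) + (\hat{G}_n - G_n), \]
and bound each uniformly over $[0,\infty)^2$.

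\textbf{Oracle part.} We show $\sup_{t_1,t_2}\normop{G_n(t_1,t_2)-G(t_1,t_2)} = \mathcal{O}_\Pr(n^{-1/2})$. The argument repeats the arc-length bound used before Theorem~\ref{thm:linearization}, now for the product class $\{\Phi_{t_1}\Phi_{t_2}^\top\}$. Write
\[ (P_n-P)\Phi_{t_1}\Phi_{t_2}^\top = \int_0^{t_1}\!\!\int_0^{t_2} (P_n-P)\dot\Phi_{s_1}\dot\Phi_{s_2}^\top \dd{s_1}\dd{s_2}, \qquad \dot\Phi_s = \pdv*{\Phi_s}{s}. \]
Then
\[ \E\sup_{t_1,t_2}\normop{(P_n-P)\Phi_{t_1}\Phi_{t_2}^\top} \le n^{-1/2}\iint \norm{\norm{\dot\Phi_{s_1}}\norm{\dot\Phi_{s_2}}}_{L^2(P)} \le n^{-1/2}\Big(\int_0^\infty \norm{\norm{\dot\Phi_s}}_{L^4(P)}\dd{s}\Big)^2. \]
The last step uses Cauchy--Schwarz. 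The finiteness of the $L^4(P)$ arc length follows from the proof of Proposition~\ref{prop:length_bound}, rerun with $L^4$ norms. That proof only uses \eqref{eq:ode-func_int}, Lemma~\ref{lem:trans_mat_norm}, and envelope bounds, and the strengthened hypothesis puts $\norm{\psi_\theta}_{\ell^2}$ and $\dot\psi_r$ in $L^4(P)$. The mean terms $P_n\Phi_{t_1}P_n\Phi_{t_2}^\top - P\Phi_{t_1}P\Phi_{t_2}^\top$ are handled by Lemma~\ref{lem:lead}: we have $U_n=\mathcal{O}_\Pr(1)$, and $\sup_t\norm{P\Phi_t}$ is bounded by the $L^1$ arc length.

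\textbf{Plug-in part.} First we show $\sup_t \norm{\norm{\hat\Phi_t-\Phi_t}_{\ell^2}}_{L^4(P_n)} = \mathcal{O}_\Pr\{(\log n/n)^{1/2}\}$. Together with the uniform bound $\sup_t\norm{\Phi_t}_{L^4(P_n)}=\mathcal{O}_\Pr(1)$, obtained from the same arc-length argument with a law of large numbers for the envelope, the bilinear identity $ab^\top - cd^\top = (a-c)b^\top + c(b-d)^\top$ and Cauchy--Schwarz under $P_n$ then give the rate for $\hat{G}_n - G_n$. Set $\delta_t = \hat\Phi_t-\Phi_t$. Subtracting \eqref{eq:ode-func_int} from \eqref{eq:ode-empirical} gives
\[ \pdv{t}\delta_t = -H(\theta^\circ(t))\delta_t - \{\hat H(\hat\theta(t)) - H(\theta^\circ(t))\}\hat\Phi_t + \psi_{\hat\theta(t)}-\psi_{\theta^\circ(t)}. \]
Variation of constants with $\Pi(t,s)$ then yields
\[ \delta_t = \int_0^t \Pi(t,s)\Big[\psi_{\hat\theta(s)}-\psi_{\theta^\circ(s)} - \{\hat H(\hat\theta(s))-H(\theta^\circ(s))\}\hat\Phi_s\Big]\dd{s}. \]
We bound the forcing terms pointwise in $z$:
\begin{itemize}[label={}]
\item[(a)] $\norm{\psi_{\hat\theta(s)}-\psi_{\theta^\circ(s)}}_{\ell^2}\le \dot\psi_r\,\norm{\hat\theta-\theta^\circ}_{L^\infty}$, with $\norm{\hat\theta-\theta^\circ}_{L^\infty}=\mathcal{O}_\Pr(n^{-1/2})$ by Theorem~\ref{thm:linearization};
\item[(b)] $\normop{\hat H(\hat\theta)-H(\theta^\circ)} \le \sup_{\mathcal{B}(\theta_*,r)}\normop{(P_n-P)\ddot m_\theta} + L(r)\norm{\hat\theta-\theta^\circ} = \mathcal{O}_\Pr\{(\log n/n)^{1/2}\}$, by \eqref{eq:Hess-rate} and Assumption~\ref{asm:Hess-Lip}.
\end{itemize}
Term (a) alone, integrated against $\normop{\Pi(t,s)}$, is not enough: $\int_0^t\normop{\Pi(t,s)}\dd{s}\le A_*$ from Lemma~\ref{lem:trans_mat_norm} controls the kernel, but $\dot\psi_r$ is integrated over $s$ without decay. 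We therefore sharpen (a) using Assumption~\ref{asm:converge}. Both flows converge to fixed points, and we aim to show $\norm{\hat\theta(s)-\theta^\circ(s)}$ is $\mathcal{O}_\Pr(n^{-1/2})$ uniformly while $\int_0^t\normop{\Pi(t,s)}\dd{s}$ stays bounded. Since the forcing is a uniform bound times a fixed envelope, the convolution with the kernel gives $A_*\dot\psi_r\,\mathcal{O}_\Pr(n^{-1/2})$, which is fine. Taking $L^4(P_n)$ norms uses $P_n\dot\psi_r^4=\mathcal{O}_\Pr(1)$.

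\textbf{Main obstacle: term (b).} Term (b) involves $\hat\Phi_s$ itself, so we close the estimate with the bootstrap/Gronwall argument behind Theorem~\ref{thm:linearization}. With $X=\sup_t\norm{\delta_t}_{L^4(P_n)}$,
\[ X \le A_*\{c_n + \epsilon_n(\sup_t\norm{\Phi_t}_{L^4(P_n)} + X)\}, \qquad c_n = \mathcal{O}_\Pr(n^{-1/2}), \quad \epsilon_n=\mathcal{O}_\Pr\{(\log n/n)^{1/2}\}. \]
Since $A_*\epsilon_n<1/2$ with probability tending to one, we can absorb $X$ and obtain $X=\mathcal{O}_\Pr\{(\log n/n)^{1/2}\}$.

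Care is needed in two places. First, all estimates are taken under the random measure $P_n$ while the time horizon is infinite, so the supremum over $t$ must be taken inside the $L^4(P_n)$ norm. Second, $\hat\theta(t)$ must stay in $\mathcal{B}(\theta_*,r)$ for all $t$; this holds with probability tending to one because $\norm{\theta^\circ(t)-\theta_*}\le C_0$ and the uniform fluctuation is $\mathcal{O}_\Pr(n^{-1/2})$, so $r=C_0+1$ works.
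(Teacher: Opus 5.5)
Your proof is correct. It uses the same top-level split $\hat G_n-G=(\hat G_n-G_n)+(G_n-G)$ as the paper, but you prove both halves differently.

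For the oracle part, the paper expands $\Phi_t=\int_0^t\Pi(t,s)\psi_{\theta^\circ(s)}\dd{s}$ and factors out $\sup_t\int_0^t\normop{\Pi(t,s)}\dd{s}\le A_*$ (Lemma~\ref{lem:est2_cov}). It is then left with the suprema $\zeta_n,\gamma_n$ of empirical processes indexed by $\psi_{\theta^\circ(s)}$ and by the products $\psi_{\theta^\circ(s)}\psi_{\theta^\circ(t)}^\top$. These it controls through bracketing numbers built from the arc length of $\theta^\circ$ (Lemmas~\ref{lem:cover} and~\ref{lem:product-bracket}) together with a bracketing maximal inequality. You instead write $\Phi_{t_1}\Phi_{t_2}^\top$ as a double integral of $\dot\Phi_{s_1}\dot\Phi_{s_2}^\top$ and bound the supremum by the integral of pointwise second moments. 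This is the same device the paper uses for $\E\norm{\Delta_n}_{L^\infty([0,\infty))}$. It needs no entropy or chaining, only finiteness of the $L^4(P)$ arc length of $s\mapsto\Phi_s$. That finiteness does follow by rerunning the proof of Proposition~\ref{prop:length_bound} with $L^4$ norms under the strengthened integrability.

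For the plug-in part, the paper compares the propagated kernels $\hat\phi_{t,s}$ and $\Pi(t,s)\psi_{\theta^\circ(s)}$ via Lemma~\ref{lem:ode}. It then applies an exponentially weighted Gr\"onwall inequality that relies on the pointwise decay in Lemma~\ref{lem:trans_mat_norm}. You apply Duhamel directly to $\delta_t=\hat\Phi_t-\Phi_t$ and treat $\{\hat H(\hat\theta(s))-H(\theta^\circ(s))\}\hat\Phi_s$ as forcing. You then absorb the supremum, using in this step only the integrated bound \eqref{asm:transition_matrix}.

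What each route buys: yours is shorter and more elementary. The paper's yields explicit deterministic inequalities (Lemmas~\ref{lem:est1_cov} and~\ref{lem:est2_cov}) whose stochastic inputs $B_n,\kappa_n,\sigma_n,\zeta_n,\gamma_n$ involve only the gradient path. That makes it easy to reuse with other Hessian estimators or rates.

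Two small repairs are needed.
\begin{itemize}
\item Your worry that term (a) ``is not enough'' is unfounded, and the proposed sharpening via Assumption~\ref{asm:converge} is unnecessary. Since $\dot\psi_r$ does not depend on $s$, you get directly $\int_0^t\normop{\Pi(t,s)}\dot\psi_r\norm{\hat\theta(s)-\theta^\circ(s)}_{\ell^2}\dd{s}\le A_*\dot\psi_r\norm{\hat\theta-\theta^\circ}_{L^\infty([0,\infty))}$, as you in fact conclude.
\item Absorbing $X$ presupposes $X<\infty$, which you have not shown for an infinite horizon. Run the argument with $X_T=\sup_{t\le T}\norm{\norm{\delta_t}_{\ell^2}}_{L^4(P_n)}$. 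This is finite because $\delta$ is continuous on $[0,T]$ at each of the $n$ data points. You obtain a bound uniform in $T$ and then let $T\to\infty$.
\end{itemize}
Finally, $L^2(P_n)$ would suffice throughout the plug-in part; $L^4(P_n)$ is harmless but not needed.
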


The logarithmic factor in Theorem~\ref{thm:cov} originates entirely from \eqref{eq:Hess-rate}. It can be removed if the population Hessian $H$ admits an estimator that is uniformly consistent at root-$n$ rate. 
The theorem shows that the infinite time horizon does not introduce an additional statistical price beyond this mild logarithmic term.
This reflects the intrinsic low-dimensional structure of the gradient flow trajectory, which enables uniform control despite the infinite time horizon. 

In the remainder of this section, we briefly describe our proof strategy: The estimation error can be decomposed into 
\[ \hat{G}_n-G = (\hat{G}_n-G_n) + (G_n-G) ,\]
where the first term captures the error induced by approximating $\Phi_t$ with $\hat{\Phi}_t$, while the second term corresponds to the usual empirical process fluctuation. We bound these two components separately.

We first control the estimation error $\hat{\Phi}_t-\Phi_t$. 
The key observation is that $\Phi_t$ admits an integral representation. 
For each $s\in[0,\infty)$, define $\hat{\phi}_{t,s}$, $t\ge s$, by the initial value problem 
\[ \pdv{t}\hat{\phi}_{t,s} = - \hat{H}(\hat{\theta}(t)) \hat{\phi}_{t,s} ,\quad \hat{\phi}_{s,s}=\psi_{\hat{\theta}(s)} .\]
\begin{lemma}\label{lem:ode}
The solution $\hat{\Phi}_t$ of \eqref{eq:ode-empirical} can be represented as 
\[ \hat{\Phi}_t(\cdot) = \int_{s=0}^{t} \hat{\phi}_{t,s}(\cdot) \dd{s} .\]
\end{lemma}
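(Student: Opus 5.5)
This is the Duhamel (variation-of-constants) principle, and the proof mirrors the derivation of \eqref{eq:ode-func_int} from \eqref{eq:func_int}. We work pointwise in the argument $z$, fixing the data and hence the trajectory $\hat{\theta}$. Define the empirical principal fundamental matrix $\hat{\Pi}(t,s)$ as the solution of
\[
\pdv{t}\hat{\Pi}(t,s) = -\hat{H}(\hat{\theta}(t))\hat{\Pi}(t,s), \qquad \hat{\Pi}(s,s)=I_d .
\]
Since the initial value problem defining $\hat{\phi}_{t,s}$ is linear with the same coefficient matrix, uniqueness gives $\hat{\phi}_{t,s}(z) = \hat{\Pi}(t,s)\psi_{\hat{\theta}(s)}(z)$ for $t\ge s$. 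The claim is therefore $\hat{\Phi}_t = \int_0^t \hat{\Pi}(t,s)\psi_{\hat{\theta}(s)}\dd{s}$, the exact empirical analogue of \eqref{eq:func_int}.

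Let $\tilde{\Phi}_t(z) = \int_0^t \hat{\phi}_{t,s}(z)\dd{s}$. Clearly $\tilde{\Phi}_0 = 0_d$. Differentiating under the integral with the Leibniz rule gives
\[
\pdv{t}\tilde{\Phi}_t = \hat{\phi}_{t,t} + \int_0^t \pdv{t}\hat{\phi}_{t,s}\dd{s} = \psi_{\hat{\theta}(t)} - \hat{H}(\hat{\theta}(t))\tilde{\Phi}_t .
\]
So $\tilde{\Phi}$ solves \eqref{eq:ode-empirical} with the same initial condition. The equation is linear in $\Phi$ with a coefficient $t\mapsto \hat{H}(\hat{\theta}(t))$ that is locally bounded, so its solution is unique, and hence $\tilde{\Phi}=\hat{\Phi}$.

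The only delicate step is justifying the Leibniz differentiation, and the uniqueness it relies on, when $t\mapsto \psi_{\hat{\theta}(t)}(z)$ or $t \mapsto \hat{H}(\hat{\theta}(t))$ is merely measurable and locally integrable rather than continuous. This happens, for example, with nonsmooth losses such as quantile regression. To avoid differentiating under the integral, I would instead verify the integrated (Carathéodory) form directly. Using $\hat{\Pi}(t,s) = I_d - \int_s^t \hat{H}(\hat{\theta}(u))\hat{\Pi}(u,s)\dd{u}$ and Fubini on the triangle $\{0\le s\le u\le t\}$, one obtains
\[
\tilde{\Phi}_t = \int_0^t \psi_{\hat{\theta}(s)}\dd{s} - \int_0^t \hat{H}(\hat{\theta}(u))\tilde{\Phi}_u\dd{u} .
\]
This shows $\tilde{\Phi}$ is absolutely continuous and satisfies \eqref{eq:ode-empirical} almost everywhere. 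Uniqueness of absolutely continuous solutions of this linear equation then follows from Grönwall's inequality, using the bound $\normop{\hat{\Pi}(t,s)}\le \exp\{\int_s^t \normop{\hat{H}(\hat{\theta}(u))}\dd{u}\}$.
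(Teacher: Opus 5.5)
Your proposal is correct and follows essentially the same route as the paper's proof. Both define $\tilde{\Phi}_t=\int_0^t\hat{\phi}_{t,s}\dd{s}$, differentiate with the Leibniz rule to recover \eqref{eq:ode-empirical} with $\tilde{\Phi}_0=0_d$, and conclude by uniqueness. Your extra integrated (Carath\'eodory) check via Fubini is a sound refinement that the paper omits, and it covers the case where $t\mapsto\psi_{\hat{\theta}(t)}(z)$ is discontinuous, as in quantile regression.
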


Lemma~\ref{lem:ode} shows that $\hat{\Phi}_t$ can be viewed as an accumulation of gradients propagated along the empirical flow, in direct analogy with the population representation \eqref{eq:func_int} of $\Phi_t$. 
This formulation allows us to compare $\hat{\Phi}_t$ and $\Phi_t$ through their respective evolution operators, and to take advantage of the stability properties of the underlying homogeneous differential equations.

\begin{lemma}\label{lem:est1_cov}
Under Assumption~\ref{asm:coercive}, with \[ A_* = \exp\{(\lambda_*-\lambda_0)t_*\}/\lambda_* ,\quad B_n = \sup_{t\in[0,\infty)} \norm{\norm{\psi_{\theta^\circ(t)}}_{\ell^2}}_{L^2(P_n)} ,\] \[ \kappa_n = \sup_{t\in[0,\infty)} \normop{\hat{H}(\hat{\theta}(t))-H(\theta^\circ(t))} ,\quad \sigma_n = \sup_{t\in[0,\infty)} \norm{\norm{\psi_{\hat{\theta}(t)}-\psi_{\theta^\circ(t)}}_{\ell^2}}_{L^2(P_n)} ,\]
where $\norm{f}_{L^2(P_n)} = (P_n f^2)^{1/2}$, 
it holds on the event $\{A_*\kappa_n<1\}$ that 
\[ \sup_{t_1,t_2\in[0,\infty)} \normop{\hat{G}_n(t_1,t_2) - G_n(t_1,t_2)} \le 4 A_*^2 B_n c_n + 2 A_*^2 c_n^2 ,\]
where $c_n = A_*B_n\kappa_n/(1-A_*\kappa_n)^2 + \sigma_n/(1-A_*\kappa_n)$. 
\end{lemma}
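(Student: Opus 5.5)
The plan is to bound $\hat{\Phi}_t-\Phi_t$ in the data-dependent norm $\norm{\norm{\cdot}_{\ell^2}}_{L^2(P_n)}$, uniformly in $t$, and then transfer this bound to the covariance. Write $\hat{\Pi}(t,s)$ for the fundamental matrix of $\partial_t X=-\hat{H}(\hat{\theta}(t))X$. By Lemma~\ref{lem:ode}, $\hat{\Phi}_t=\int_0^t\hat{\Pi}(t,s)\psi_{\hat{\theta}(s)}\dd{s}$, while $\Phi_t=\int_0^t\Pi(t,s)\psi_{\theta^\circ(s)}\dd{s}$ by \eqref{eq:func_int}. We then split
\[
\hat{\Phi}_t-\Phi_t=\int_0^t\{\hat{\Pi}(t,s)-\Pi(t,s)\}\psi_{\theta^\circ(s)}\dd{s}+\int_0^t\hat{\Pi}(t,s)\{\psi_{\hat{\theta}(s)}-\psi_{\theta^\circ(s)}\}\dd{s}.
\]

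\textbf{Step 1: the perturbed propagator.} The main obstacle is controlling $\hat{\Pi}$ uniformly on $[0,\infty)$, since $\hat{H}(\hat{\theta}(t))$ need not be coercive early on. We use the variation-of-constants (Duhamel) identity
\[
\hat{\Pi}(t,s)=\Pi(t,s)-\int_s^t\Pi(t,u)\{\hat{H}(\hat{\theta}(u))-H(\theta^\circ(u))\}\hat{\Pi}(u,s)\dd{u}.
\]
Rather than bounding $\normop{\hat{\Pi}}$ pointwise, which only gives a Gronwall-type estimate, we bound the operator $f\mapsto\int_0^t\hat{\Pi}(t,s)f(s)\dd{s}$ on $L^\infty([0,\infty))$. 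Lemma~\ref{lem:trans_mat_norm} gives $\sup_t\int_0^t\normop{\Pi(t,s)}\dd{s}\le A_*$, so the same operator with $\Pi$ has sup-norm at most $A_*$. Set $y(t)=\int_0^t\hat{\Pi}(t,s)f(s)\dd{s}$. Then $y$ solves $y'=-H(\theta^\circ)y+f-(\hat{H}(\hat\theta)-H(\theta^\circ))y$, hence
\[
y(t)=\int_0^t\Pi(t,u)\{f(u)-E(u)y(u)\}\dd{u},\qquad E(u)=\hat{H}(\hat{\theta}(u))-H(\theta^\circ(u)).
\]
This gives $\norm{y}_\infty\le A_*\norm{f}_\infty+A_*\kappa_n\norm{y}_\infty$. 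On $\{A_*\kappa_n<1\}$ we therefore obtain $\norm{y}_\infty\le A_*\norm{f}_\infty/(1-A_*\kappa_n)$, provided $\norm{y}_\infty<\infty$ is known a priori. We secure this by a truncation argument: work on $[0,T]$, where everything is finite by Gronwall, then let $T\to\infty$. The argument is done pointwise in the data argument $z$ and then lifted to $L^2(P_n)$. Minkowski's integral inequality is used here, since $\hat{\Pi}$ and $\Pi$ are data-deterministic given the sample and act linearly on the $z$-dependence.

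\textbf{Step 2: bounding $\hat{\Phi}_t-\Phi_t$.} The second term is $y$ with $f=\psi_{\hat{\theta}}-\psi_{\theta^\circ}$. Its $L^2(P_n)$-norm is at most $A_*\sigma_n/(1-A_*\kappa_n)$, which matches the second part of $c_n$ up to the factor $A_*$ absorbed as in the statement. For the first term, the Duhamel identity gives the representation $-\int_0^t\Pi(t,u)E(u)\hat{\Phi}^\circ_u\dd{u}$ with $\hat{\Phi}^\circ_u=\int_0^u\hat{\Pi}(u,s)\psi_{\theta^\circ(s)}\dd{s}$. Step 1 gives $\norm{\hat{\Phi}^\circ_u}_{L^2(P_n)}\le A_*B_n/(1-A_*\kappa_n)$. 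Applying the contraction bound once more produces $A_*^2B_n\kappa_n/(1-A_*\kappa_n)^2$. Together these give $\sup_t\norm{\norm{\hat{\Phi}_t-\Phi_t}_{\ell^2}}_{L^2(P_n)}\le A_*c_n$, with constants arranged to match the statement. In the same way, $\sup_t\norm{\norm{\Phi_t}_{\ell^2}}_{L^2(P_n)}\le A_*B_n$.

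\textbf{Step 3: the covariance bilinear form.} $\hat{G}_n-G_n$ is the difference of $\Cov_{P_n}$ bilinear forms. Write $\hat\Phi=\Phi+\delta$; then
\[
\hat{G}_n-G_n=\Cov_{P_n}(\delta_{t_1},\Phi_{t_2})+\Cov_{P_n}(\Phi_{t_1},\delta_{t_2})+\Cov_{P_n}(\delta_{t_1},\delta_{t_2}).
\]
Each empirical covariance has operator norm at most $2\norm{\norm{a}_{\ell^2}}_{L^2(P_n)}\norm{\norm{b}_{\ell^2}}_{L^2(P_n)}$, using Cauchy--Schwarz for $P_nab^\top$ and for $P_na\,P_nb^\top$. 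Inserting the bounds from Step 2 yields $4A_*^2B_nc_n+2A_*^2c_n^2$.
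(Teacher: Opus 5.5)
Your argument is correct, but it is organized differently from the paper's proof.

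\textbf{The paper's route.} The paper works with the two-parameter kernel. For each fixed $s$ it compares $\hat\phi_{t,s}$ with $\phi_{t,s}=\Pi(t,s)\psi_{\theta^\circ(s)}$ by variation of constants. It then runs Gr\"onwall on $e^{\lambda_* t}\|\hat\phi_{t,s}-\phi_{t,s}\|_{\ell^2}$, using the pointwise bound $\|\Pi(t,s)\|_{\mathrm{op}}\le A_*\lambda_*e^{-\lambda_*(t-s)}$ from Lemma~\ref{lem:trans_mat_norm}. Lifting to $L^2(P_n)$ gives the bound $A_*\lambda_*\{A_*B_n\kappa_n\lambda_*(t-s)+\sigma_n\}e^{-(1-A_*\kappa_n)\lambda_*(t-s)}$. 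Finally it bounds the two parts of the covariance for each $(s_1,s_2)$ and integrates. The factor $\lambda_*(t-s)$ is what produces the $(1-A_*\kappa_n)^{-2}$ in $c_n$.

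\textbf{Your route.} You view $f\mapsto\int_0^t\hat\Pi(t,s)f(s)\,ds$ as a perturbation of the $\Pi$-Volterra operator, close a small-gain inequality in the supremum over time, and then use bilinearity of $\Cov_{P_n}$. This buys generality and economy:
\begin{itemize}
\item only the integrated bound \eqref{asm:transition_matrix} is used, not the exponential form;
\item the first part of $c_n$ improves to $A_*B_n\kappa_n/(1-A_*\kappa_n)$, because the outer propagator in $-\int_0^t\Pi(t,u)E(u)\hat\Phi^\circ_u\,du$ is $\Pi$ rather than $\hat\Pi$;
\item no double integrals over $(s_1,s_2)$ are needed.
\end{itemize}
The costs are twofold. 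You need a priori finiteness on $[0,T]$ (your truncation step), whereas Gr\"onwall needs none. You also lose the paper's stronger by-product that $\hat\Pi$ itself is exponentially stable at rate $(1-A_*\kappa_n)\lambda_*$.

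\textbf{One point of execution.} Do not run the sup-in-time argument pointwise in $z$ and lift afterwards. That yields $\|\sup_t\|f(t)\|_{\ell^2}\|_{L^2(P_n)}$, which dominates $B_n$ and $\sigma_n$ rather than equalling them. Instead, run the small-gain inequality directly on $Y(t)=\|\|y(t)\|_{\ell^2}\|_{L^2(P_n)}$. Minkowski's integral inequality allows this because $\Pi$ and $E$ do not depend on $z$. The paper can lift after a pointwise Gr\"onwall only because, for fixed $s$, the data enter solely through the initial value at time $s$.
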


Lemma~\ref{lem:est1_cov} shows that the discrepancy $\hat{G}_n-G_n$ is governed by two quantities, $\kappa_n$ and $\sigma_n$ that measure the uniform error in approximating the population Hessian and the individual gradients, respectively.
The factor $B_n$ provides a uniform bound on the empirical norm of the gradients. If $B_n$ is bounded in probability and both $\kappa_n$ and $\sigma_n$ vanish in probability, then $\hat{G}_n-G_n$ is negligible.

It remains to control $G_n-G$, the empirical fluctuation of the oracle estimator.
\begin{lemma}\label{lem:est2_cov}
If \eqref{asm:transition_matrix} holds, then 
\[ \sup_{t_1,t_2\in[0,\infty)} \normop{G_n(t_1,t_2) - G(t_1,t_2)} \le A_*^2 (\gamma_n + 2 B^\circ \zeta_n + \zeta_n^2) ,\]
where $B^\circ = \sup_{t\in[0,\infty)} \norm{P\psi_{\theta^\circ(t)}}_{\ell^2}$, and 
\[ \zeta_n = \sup_{t\in[0,\infty)} \norm{(P_n-P)\psi_{\theta^\circ(t)}}_{\ell^2} ,\quad \gamma_n = \sup_{s,t\in[0,\infty)} \normop{(P_n-P)\psi_{\theta^\circ(s)}\psi_{\theta^\circ(t)}^\top} .\]
\end{lemma}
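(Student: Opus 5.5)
Write $\Phi_t=\int_0^t\Pi(t,s)\psi_{\theta^\circ(s)}\dd{s}$ as in \eqref{eq:func_int} and expand both covariance matrices bilinearly in the integral representation. This reduces the comparison of $G_n$ and $G$ to comparisons of second and first moments of $\psi_{\theta^\circ(s)}$, weighted by the deterministic matrices $\Pi(t,s)$. Because $\Pi$ is nonrandom, $P_n$ and $P$ pass through the $s$-integrals (Fubini, using square-integrability from Assumption~\ref{asm:grad-Lip}). This gives
\[
P_n\Phi_{t_1}\Phi_{t_2}^\top-P\Phi_{t_1}\Phi_{t_2}^\top=\int_0^{t_1}\!\!\int_0^{t_2}\Pi(t_1,s)\,(P_n-P)\big[\psi_{\theta^\circ(s)}\psi_{\theta^\circ(u)}^\top\big]\,\Pi(t_2,u)^\top\dd{u}\dd{s}.
\]
Taking operator norms and applying \eqref{asm:transition_matrix} to each integral bounds this term by $A_*^2\gamma_n$, uniformly in $(t_1,t_2)$.

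For the mean part, write $a_j=P\Phi_{t_j}$ and $\delta_j=(P_n-P)\Phi_{t_j}$. Then
\[
P_n\Phi_{t_1}P_n\Phi_{t_2}^\top-a_1a_2^\top=\delta_1a_2^\top+a_1\delta_2^\top+\delta_1\delta_2^\top .
\]
Again by the integral representation, $\norm{a_j}_{\ell^2}\le\int_0^{t_j}\normop{\Pi(t_j,s)}\norm{P\psi_{\theta^\circ(s)}}_{\ell^2}\dd{s}\le A_*B^\circ$ and $\norm{\delta_j}_{\ell^2}\le A_*\zeta_n$. The operator norm of a rank-one matrix $xy^\top$ equals $\norm{x}_{\ell^2}\norm{y}_{\ell^2}$, so the mean part is bounded by $2A_*^2B^\circ\zeta_n+A_*^2\zeta_n^2$.

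By the triangle inequality, adding the two contributions gives $A_*^2(\gamma_n+2B^\circ\zeta_n+\zeta_n^2)$, uniformly over $t_1,t_2\in[0,\infty)$. This is the stated bound.

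The argument is essentially routine. The only point that needs care is justifying that $P_n$ and $P$ commute with the time integrals. This holds because $s\mapsto\psi_{\theta^\circ(s)}$ is measurable and $\int_0^t\normop{\Pi(t,s)}\,\norm{\norm{\psi_{\theta^\circ(s)}}_{\ell^2}}_{L^2(P)}\dd{s}<\infty$. For $P_n$ the interchange is trivial, since $P_n$ is a finite sum. For $P$, we may assume $\gamma_n,\zeta_n,B^\circ$ are finite (otherwise the bound is vacuous), and the interchange follows from Tonelli together with Cauchy--Schwarz for the product term.
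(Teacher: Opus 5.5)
Your proof is correct and follows essentially the same route as the paper: expand both covariances bilinearly through $\Phi_t=\int_0^t\Pi(t,s)\psi_{\theta^\circ(s)}\dd{s}$, bound the integrands, and integrate using \eqref{asm:transition_matrix}. The only difference is cosmetic. The paper bounds $\normop{\Cov_{P_n}(\psi_{\theta^\circ(s_1)},\psi_{\theta^\circ(s_2)})-\Cov_P(\psi_{\theta^\circ(s_1)},\psi_{\theta^\circ(s_2)})}$ by $\gamma_n+\zeta_n^2+2B^\circ\zeta_n$ pointwise in $(s_1,s_2)$ before integrating, whereas you integrate first and split the mean part through $P\Phi_{t_j}$ and $(P_n-P)\Phi_{t_j}$; this gives the same constant, and your remark on interchanging $P$ with the time integrals covers a point the paper passes over.
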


Combining Lemmas~\ref{lem:est1_cov} and~\ref{lem:est2_cov} with the uniform Hessian bound \eqref{eq:Hess-rate} yields Theorem~\ref{thm:cov}. Consequently, the covariance estimator $\hat{G}_n$ is uniformly consistent and can be used in the Wald interval \eqref{eq:CI-general} for inference along the gradient flow, including at data-dependent and diverging stopping times for which the population bias is controlled.

\section{Numerical results}
\label{sec:numerical}
In this section, we assess the practical performance of the proposed approach through numerical experiments.
First we describe the computational procedures used to solve the gradient flow equation and the auxiliary dynamics that underpin our covariance estimation method. 
Then we present simulation results that illustrate the finite-sample accuracy of the Gaussian approximation and the coverage of the resulting confidence intervals.

\subsection{Implementation}
\label{sec:algo}
Since both \eqref{eq:gradflow_emp} and \eqref{eq:ode-empirical}, the gradient flow and the auxiliary dynamics, evolve jointly over time, it is natural to discretize them simultaneously. 
We consider two widely used schemes, the explicit Euler method and a fourth-order Runge--Kutta method \citep{atkinson2009numerical}. The Euler scheme serves as a simple and computationally efficient baseline, while the Runge--Kutta method improves accuracy by incorporating intermediate evaluations of the vector field. 
Notably, the explicit Euler discretization of \eqref{eq:gradflow_emp} coincides with the classical gradient descent algorithm with a fixed step size, thereby providing a direct connection between our continuous-time formulation and practical optimization procedures. 
We present these solvers in Algorithms \ref{alg:Euler} and \ref{alg:RK4}, highlighting how the primary trajectory $\hat{\theta}(t)$ and the auxiliary process $\hat{\Phi}_t$ can be updated in tandem at each iteration. To save space, Algorithm~\ref{alg:RK4} is deferred to Appendix~\ref{appn:numeric}.

\begin{algorithm}[!ht]
\caption{Euler's method to solve \eqref{eq:gradflow_emp} and \eqref{eq:ode-empirical}}
\label{alg:Euler}
\KwIn{Data $Z_1,\dots,Z_n$, initial value $\theta_0$, step size $\delta$.}
Initialization: set $J\gets 0$, $\hat{\theta}(0)\gets \theta_0$, and $\hat{\Phi}_0(Z_i) \gets 0_d$ for $i=1,\dots,n$\;
\Repeat{a stopping criterion is met, e.g., $J$ exceeds a maximum number of iterations or the Euclidean norm of $n^{-1}\sum_{i=1}^{n} g_i$ is less than a tolerance.}{
Compute the gradients $g_i \gets \psi_\theta(Z_i)$, $i=1,\dots,n$, at $\theta = \hat{\theta}(J\delta)$\;
Construct an estimate $\hat{H}$ for the population Hessian $H(\theta)$ at $\theta = \hat{\theta}(J\delta)$\;
Update 
\begin{align*}
& \hat{\theta}((J+1)\delta) \gets \hat{\theta}(J\delta) - \delta \cdot n^{-1}\sum_{i=1}^{n} g_i ;\\
& \hat{\Phi}_{(J+1)\delta}(Z_i) \gets \hat{\Phi}_{J\delta}(Z_i) + \delta \cdot \{g_i - \hat{H} \hat{\Phi}_{J\delta}(Z_i)\} ,\quad i=1,\dots,n ;\\
& J \gets J+1 ;
\end{align*}
}
\KwOut{Trajectories of estimators $\hat{\theta}(t)$ and $\hat{\Phi}_t(Z_i)$, $i=1,\dots,n$, with $t=j\delta$, $j=1,\dots,J$.}
\end{algorithm}

\subsection{Simulation}
\label{sec:simu}
We apply the method to three standard models from Example~\ref{ex:glr}, together with two extensions of the linear model. In each setting, $\theta_*$ denotes the stationary point of the population gradient flow~\eqref{eq:gradflow_pop}.
Let $X\in\R^{d_0}$ satisfy $X\sim \N(0_{d_0},\Sigma)$ where $e_j^\top\Sigma e_k = 1.09\,\1\{j=k\} + 0.6\,\1\{j+k=d_0+1\}$, and fix $\beta\in\R^{d_0}$ with $e_k^\top\beta = 2+\1\{k\in 2\mathbb{N}\}$. 

\begin{itemize}
\item Linear regression (loss $l_{\mathrm{lin}}$): 
$Y\mid X\sim \N(X^\top\beta,\sigma^2)$, $\sigma=0.1$, $\theta_*=\beta$.

\item Logistic regression (loss $l_{\mathrm{logit}}$): 
$Y\mid X\sim \mathrm{Bernoulli}\!\left(\{1+\exp\,\!(-X^\top\beta)\}^{-1}\right)$, $\theta_*=\beta$.

\item Phase retrieval (loss $l_{\mathrm{pr}}$): 
$Y\mid X\sim \N\big((X^\top\beta)^2,\sigma^2\big)$, $\sigma=0.5$, $\theta_*=\beta$, selected over $-\beta$ by initializing at a point with positive coordinates.

\item Quantile regression (loss $l_{\mathrm{qt}}$, level $\tau=0.78$): 
linear-model data with $Y=X^\top\beta+\varepsilon$, $\varepsilon\sim \N(0,\sigma^2)$, $\sigma=0.1$, augmented covariates $\widetilde X=(1,X^\top)^\top$, and $\theta_*=(\sigma z_\tau,\beta^\top)^\top$, where $z_\tau$ is the standard normal $\tau$-quantile.

\item Ridge regression (loss $l_{\mathrm{lin}}$ with penalty $p(\theta)=(\lambda/2)\norm{\theta}_{\ell^2}^2$, $\lambda=0.123$): 
linear-model data with $Y\mid X\sim \N(X^\top\beta,\sigma^2)$, $\sigma=0.1$, and $\theta_*=(\Sigma+\lambda I_{d_0})^{-1}\Sigma\beta$, exhibiting the shrinkage bias induced by regularization.
\end{itemize}

We set the sample size and dimension to be $n = 1000$ and $d_0 = 5$, respectively, and implement Algorithms \ref{alg:Euler} and \ref{alg:RK4} with step size $\delta = 10^{-3}$. 
Given a twice differentiable loss, we estimate the population Hessian by its empirical counterpart $\hat{H}(\theta) = P_n \ddot{m}_\theta$. In quantile regression, we use 
\[ \hat{H}(\theta) = \{nh(\theta)\}^{-1}\sum_{i=1}^{n}\varphi\big(r_i(\theta)/h(\theta)\big) \cdot n^{-1}\sum_{i=1}^{n}(1,X_i^\top)^\top(1,X_i^\top) ,\]
where $\varphi$ is the probability density function of the standard normal distribution, $r_i(\theta)$ is the residual when predicting the $i$th subject with parameter $\theta$, and $h(\theta) = \hat{\operatorname{sd}}(\theta)n^{-1/5}$ where $\hat{\operatorname{sd}}(\theta)$ is the sample standard deviation of $r_i(\theta)$'s. 
We initialize the algorithms at $\theta_0 = \beta_0$ with $e_k^\top\beta_0 = 2.5$ for all $k = 1 , \dots, d_0$, except in quantile regression where $\theta_0 = (0.1, \beta_0^\top)^\top$. 
We cap the number of iterations at $2\times 10^6$ and stop when the gradient norm falls below $10^{-5}$. The resulting horizon is sufficiently long that the population flow is effectively stationary at the terminal time $\hat{t}$.
Inference for each coordinate $e_k^\top\theta_*$ is conducted via the confidence interval \eqref{eq:CI}. Equivalently, we consider the standardized z-scores 
\begin{equation}\label{eq:z-score}
\hat{z}_k = e_k^\top\{\hat{\theta}(\hat{t})-\theta_*\} / \{n^{-1} e_k^\top\hat{G}_n(\hat{t},\hat{t})e_k\}^{1/2} ,
\end{equation}
which our theory predicts to be approximately standard normal.

To assess the finite-sample performance of our proposed method, we conduct $1000$ Monte Carlo replications. 
The distributional results are reported in Figures \ref{fig:Euler} and \ref{fig:RK4} in Appendix~\ref{appn:numeric}, with Figure~\ref{fig:demo} showing a representative excerpt. 
The density estimates and the Q-Q plots of the standardized scores in~\eqref{eq:z-score} closely match the standard normal approximation.
Table~\ref{tab:coverage} reports the average coverage rates of the confidence intervals in \eqref{eq:CI} at the nominal $90\%$ and $95\%$ levels.
The results demonstrate that the proposed inference procedures achieve accurate coverage across all models considered, and remain stable under different numerical solvers. This supports both the theoretical Gaussian asymptotics and the robustness of the proposed implementation. 

\begin{figure}[!ht]
    \centering
\includegraphics[width=\linewidth]{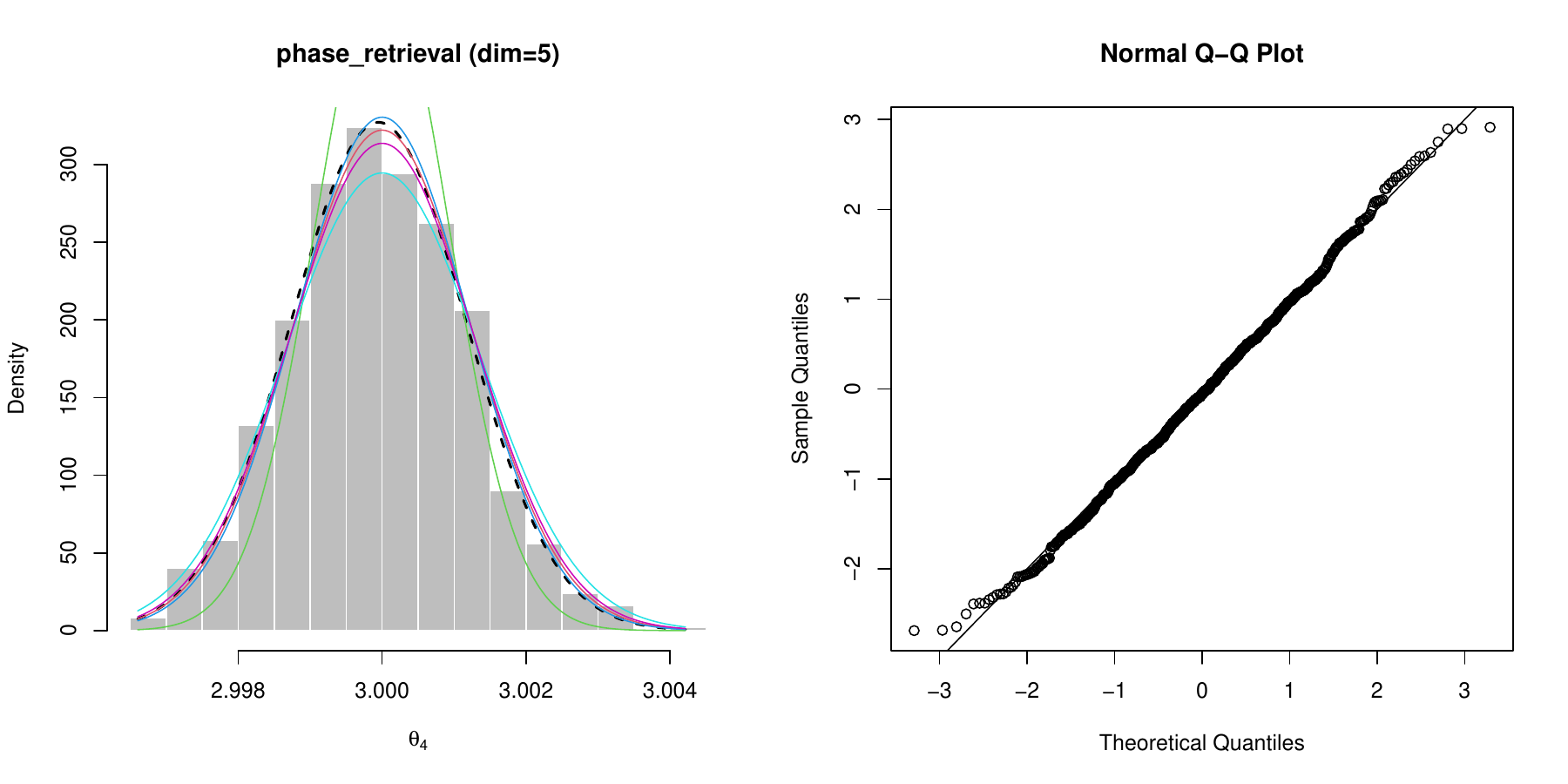}
\caption{Fourth coordinate of stationary points of gradient flows solved by Algorithm~\ref{alg:Euler} in phase retrieval. The dashed black line is the normal density plot with mean and variance based on Monte Carlo replications, the solid colored lines are the normal density plots with mean to be the population truth and variance to be the estimates in several Monte Carlo runs, and the Q-Q plot consists of the z-scores \eqref{eq:z-score} arising from Monte Carlo replications versus theoretical normal quantiles.}
\label{fig:demo}
\end{figure}
\begin{table}[!ht]
\caption{Average coverage rates (\%) under either Algorithm~\ref{alg:Euler} or Algorithm~\ref{alg:RK4}.}
\label{tab:coverage}
    \centering
\begin{tabular}{l|rrrrrr}
\hline
(Nominal 90\%) & $e_1^\top\theta_*$ & $e_2^\top\theta_*$ & $e_3^\top\theta_*$ & $e_4^\top\theta_*$ & $e_5^\top\theta_*$ & $\sigma z_\tau$ \\
Linear regression & 89.7 & 89.1 & 86.8 & 89.7 & 90.6 & NA\\
Logistic regression & 90.2 & 91.1 & 89.5 & 88.7 & 89.4 & NA\\
Phase retrieval & 91.5 & 89.3 & 89.7 & 89.1 & 90.6 & NA\\
Quantile regression & 89.0 & 90.1 & 88.6 & 91.1 & 89.2 & 88.6 \\
Ridge regression & 89.8 & 89.1 & 88.2 & 90.7 & 91.1 & NA\\
\hline(Nominal 95\%) & $e_1^\top\theta_*$ & $e_2^\top\theta_*$ & $e_3^\top\theta_*$ & $e_4^\top\theta_*$ & $e_5^\top\theta_*$ & $\sigma z_\tau$ \\
Linear regression & 94.8 & 95.1 & 93.4 & 94.6 & 94.8 & NA\\
Logistic regression & 95.5 & 96.4 & 95.5 & 94.4 & 94.5 & NA\\
Phase retrieval & 96.0 & 94.6 & 95.0 & 94.1 & 95.7 & NA\\
Quantile regression & 94.6 & 95.3 & 94.0 & 95.1 & 93.7 & 94.1 \\
Ridge regression & 94.8 & 94.7 & 93.7 & 95.5 & 95.2 & NA\\
\hline
\end{tabular}
\end{table}

We emphasize that our theoretical results are derived in a large-sample regime, where the sample size $n$ is sufficiently large relative to the ambient dimension. In additional numerical experiments with fixed $n$ and increasing dimension $d_0$, we observe that the Gaussian approximation becomes less reliable. 
This deterioration is most pronounced for logistic regression and phase retrieval, where the gradient descent solutions exhibit noticeable instability. 
Such a phenomenon can be attributed to the worsening of curvature properties in high dimensions with significant noise. The empirical loss landscape may become nearly flat or ill-conditioned in certain directions, leading to amplified stochastic fluctuations and weaker concentration of the gradient. 
This requires careful additional treatment, such as bias reduction \citep{stolte2024comprehensive} and spectral initialization \citep{peng2024noisy}. 
As a consequence, the linearization underlying our uniform central limit theorem provides a less accurate approximation, and the resulting inference procedures may not perform well.

\section{Discussion}
\label{sec:discuss}

Two aspects of our results merit further discussion. First, the analysis treats optimization algorithms as stochastic processes indexed by time, rather than merely as procedures that return a terminal estimator. 
This perspective makes the statistical content of the trajectory explicit and allows its pathwise structure to be used for uniform convergence and inference. In particular, the low-complexity geometry of gradient flow paths plays a crucial role in enabling the application of empirical process techniques over infinite time intervals.

Second, the proposed covariance estimator integrates inference directly into the algorithmic dynamics. This stands in contrast to traditional approaches that treat optimization and inference as separate stages. By coupling these two components, we obtain procedures that are both computationally efficient and statistically robust, especially in settings where the stopping rule is adaptive or implicitly defined.

There are several potential directions for future research. 
First, our analysis applies to fixed-dimensional regimes and relies on structural conditions such as local strong convexity and Lipschitz continuity of the gradient and Hessian. These assumptions ensure stability of the gradient flow and make the associated empirical-process analysis tractable. In high-dimensional regimes, where the ambient dimension $d$ may grow with or exceed the sample size $n$, these conditions typically fail globally and must be replaced by weaker, structure-adapted assumptions. A natural extension is to consider parameters with low-complexity structure such as sparsity, low rank, or manifold constraints. Then restricted strong convexity or localized curvature conditions can be imposed along the trajectory to ensure that the gradient flow remains confined to a region of controlled effective dimension. In this context, regularization plays a significant role. Penalties such as $\ell^1$- or nuclear norms introduce non-smooth optimization landscapes, so the gradient flow is more appropriately formulated in terms of differential inclusion. Although existence and stability can still be established under suitable convexity assumptions, the resulting statistical behavior may be substantially altered by active-set dynamics and model selection effects. Extending our time-uniform inference framework to such settings would therefore require combining dynamical analysis with high-dimensional tools, while simultaneously tracking both the evolution of the parameters and potential changes in the underlying model structure.

Second, an important direction concerns discretized stochastic algorithms encountered in practice, including stochastic gradient descent and its variants. In these settings, the dynamics is influenced by both the objective function and algorithmic noise, whose interaction with statistical variability can be subtle, particularly over long time horizons. While our analysis is conducted in continuous time, practical implementations rely on discrete updates with a finite step size, introducing discretization error that may accumulate and remain comparable to stochastic fluctuations. As a result, the gap between the discrete trajectory and its gradient flow limit can affect both the limiting distribution and the validity of inference procedures. Addressing this challenge requires a refined analysis that simultaneously controls discretization bias and stochastic error, potentially via coupling techniques or higher-order approximations of the dynamics. 

Finally, it is of interest to investigate whether the low-complexity structure of gradient flow trajectories, which underpins our uniform convergence results, persists in broader classes of algorithms. For example, proximal methods, mirror descent, and second-order schemes may exhibit different geometric properties that influence both their statistical behavior and the feasibility of time-uniform inference. Understanding these differences will help delineate the scope of algorithmic inference and identify general principles that govern the interaction between optimization dynamics and statistical uncertainty.



\appendix

\forcsvlist{\renewAformat}{assumption,lemma,theorem,corollary,equation}

\section{Donsker property of one-parameter functions}
\label{appn:Donsker}
We establish a uniform central limit theorem for empirical processes indexed by one-parameter functions, stated in Lemma~\ref{lem:uclt} below.
The newly introduced criterion of arc length extends the regularly used function classes \citep[Chapter~2.7]{vVW2023weak}.

\begin{lemma}\label{lem:uclt}
Let $\varphi_t$, $t\in \mathcal{T}$, be real-valued functions with index set $\mathcal{T}\subset\R$ being an interval. 
Assume that $\varphi_{t_0} \in L^2(P)$ for some $t_0 \in \mathcal{T}$, and that $\pdv*{\varphi_t(\cdot)}{t}$ exists and satisfies the finite arc length condition:
\[ \int_\mathcal{T} \norm{\pdv{t}\varphi_t}_{L^2(P)} \dd{t} < \infty ,\]
where $\norm{f}_{L^2(P)} = (P f^2)^{1/2}$. 
Then $(\varphi_t)_{t\in \mathcal{T}}$ is $P$-Donsker in the sense that $n^{1/2}(P_n-P)\varphi_t$, $t\in \mathcal{T}$, converges in distribution to a zero-mean Gaussian process uniformly on $\mathcal{T}$.
\end{lemma}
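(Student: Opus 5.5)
The plan is to verify the bracketing entropy condition of the classical bracketing central limit theorem \citep[Theorem~2.5.6]{vVW2023weak}. The idea is that finite $L^2(P)$-arc length lets us cut $\mathcal{T}$ into finitely many pieces of small arc length, and each piece gives one bracket.

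\emph{Step 1 (pointwise integral representation and envelope).} For each fixed $z$, the map $t\mapsto\varphi_t(z)$ is differentiable. I will use the fundamental theorem of calculus in the form
\[
\varphi_t(z)-\varphi_u(z)=\int_u^t \pdv{r}\varphi_r(z)\dd{r}.
\]
This needs absolute continuity in $t$, which I would take as part of the differentiability hypothesis, together with joint measurability of $(r,z)\mapsto\pdv*{\varphi_r(z)}{r}$.

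Put $g(z)=\int_{\mathcal{T}}\abs{\pdv*{\varphi_r(z)}{r}}\dd{r}$. Minkowski's integral inequality gives
\[
\norm{g}_{L^2(P)}\le\int_{\mathcal{T}}\norm{\pdv{r}\varphi_r}_{L^2(P)}\dd{r}=:\ell<\infty .
\]
Hence $F=\abs{\varphi_{t_0}}+g$ is a square-integrable envelope of the class, and in particular $\sup_t\abs{\varphi_t}<\infty$ $P$-a.s.

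\emph{Step 2 (brackets via arc-length partition).} Fix $\varepsilon>0$ and the arc-length function $\ell(t)=\int_{\inf\mathcal{T}}^t\norm{\pdv*{\varphi_r}{r}}_{L^2(P)}\dd{r}$, which is nondecreasing with total variation $\ell$. Choose points $t_0'<t_1'<\dots<t_N'$ (including the possibly infinite endpoints as limits) cutting $\mathcal{T}$ into intervals $I_j$ with $\int_{I_j}\norm{\pdv*{\varphi_r}{r}}_{L^2(P)}\dd{r}\le\varepsilon$. This is possible with $N\le\lceil\ell/\varepsilon\rceil+1$, because the arc length of an interval is continuous in its endpoints.

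On $I_j$, pick a representative $s_j\in I_j$ and set $\Delta_j=\int_{I_j}\abs{\pdv*{\varphi_r}{r}}\dd{r}$. By Step 1, for every $t\in I_j$ we have $\varphi_{s_j}-\Delta_j\le\varphi_t\le\varphi_{s_j}+\Delta_j$ pointwise. Minkowski again gives $\norm{2\Delta_j}_{L^2(P)}\le2\varepsilon$. Therefore
\[
N_{[\,]}(2\varepsilon,\{\varphi_t\},L^2(P))\le \ell/\varepsilon+2,
\]
and the bracketing integral $\int_0^1\{\log N_{[\,]}(\varepsilon)\}^{1/2}\dd{\varepsilon}\lesssim\int_0^1\{\log(2+\ell/\varepsilon)\}^{1/2}\dd{\varepsilon}$ is finite.

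\emph{Step 3 (conclusion).} By the bracketing CLT, the class is $P$-Donsker. That is, $n^{1/2}(P_n-P)\varphi_t$ converges weakly in $L^\infty(\mathcal{T})$ to a tight zero-mean Gaussian process with covariance $\Cov_P(\varphi_s,\varphi_t)$. The vector-valued version used in Lemma~\ref{lem:lead} then follows by applying this coordinatewise to $e_k^\top\Phi_t$, using $\norm{\pdv*{(e_k^\top\Phi_t)}{t}}_{L^2(P)}\le\norm{\norm{\pdv*{\Phi_t}{t}}_{\ell^2}}_{L^2(P)}$. Joint weak convergence comes from marginal tightness of each coordinate plus finite-dimensional convergence.

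\emph{Main obstacle.} The expected difficulty is not the entropy count but the technical justification in Step 1. One must ensure that the pointwise identity $\varphi_t-\varphi_u=\int_u^t\pdv*{\varphi_r}{r}\dd{r}$ holds for $P$-a.e.\ $z$ simultaneously for all $t,u$, and that it extends to unbounded $\mathcal{T}$. For the unbounded case I would first work on bounded subintervals and let the endpoints go to infinity. Monotone convergence, together with $\Delta_j\in L^2(P)$, keeps the outermost brackets valid, since those intervals have finite arc length. I would also check the measurability of $\Delta_j$ and of the suprema involved, which is handled by Tonelli and by the brackets themselves being measurable.
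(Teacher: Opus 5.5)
Your proposal is correct and follows the paper's proof essentially step for step. The paper also applies the bracketing Donsker theorem (Theorem~2.5.6 of van der Vaart and Wellner) and partitions $\mathcal{T}$ into pieces of $L^2(P)$-arc length at most $\varepsilon/2$. It covers each piece by the bracket $\varphi_{t_{j-1}}\pm\int_{t_{j-1}}^{t_j}\lvert\partial_u\varphi_u\rvert\,du$, whose size is bounded by the same Minkowski-type inequality, giving $N_{[\,]}(\varepsilon,\mathcal{F},L^2(P))\le\lceil 2\ell/\varepsilon\rceil$ and a finite entropy integral.

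The technical worries in your Step~1 resolve themselves, so no extra hypothesis is needed. Since $(r,z)\mapsto\varphi_r(z)$ is continuous in $r$ and measurable in $z$, its $r$-derivative is jointly measurable. A function differentiable everywhere with Lebesgue-integrable derivative on a bounded interval automatically satisfies the fundamental theorem of calculus, so no separate absolute-continuity assumption is required. Where the relevant integral $\int\lvert\partial_r\varphi_r(z)\rvert\,dr$ is infinite, the bracket inequality holds trivially.
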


If the one-dimensional index set is a finite interval, then the arc length is easily bounded for continuously differentiable trajectories. 
To address the more interesting case of an infinite interval, we provide a method to control the arc length of a differential dynamical system. Assuming that the forcing vector field converges fast enough to a limiting autonomous field with a strictly stable equilibrium, we can make the velocity integrable by revealing a scalar dissipative inequality of the autonomous system. Such dissipation is built on the evolution of energy, which is typically described by a Lyapunov function \citep[Section~3.4]{barreira2012ordinary}. 
The arc length bound is presented in the following lemma. 

\begin{lemma}\label{lem:length-ode}
Let $\varphi : \mathcal{T} = [t_0,\infty) \to E$ be a mapping with bounded range $\varphi(\mathcal{T})$, where $E$ is a Hilbert space endowed with inner product $\inprod{\cdot}{\cdot}$ and norm $\norm{\cdot}$. Suppose that 
\[ \dv{\varphi(t)}{t} = F(\varphi(t),t) \]
for a mapping $F : E\times\mathcal{T} \to E$ with separable, integrable non-autonomous defect, in the sense that there exists a mapping $F_* : E \to E$, a non-decreasing function $a : [0,\infty) \to [0,\infty)$ and an integrable function $r : \mathcal{T} \to [0,\infty)$ such that 
\[ \norm{F(u,t)-F_*(u)} \le a(\norm{u}) r(t) .\]
Assume that $F_*$ has sub-linear growth around a stationary point $u_* \in E$, i.e., there exists a constant $\Lambda > 0$ such that 
\[ \norm{F_*(u)} \le \Lambda \norm{u-u_*} .\]
Assume further a differentiable Lyapunov function $V : E \to [0,\infty)$ such that for all $u \in \varphi(\mathcal{T})$, 
\begin{enumerate}
\item $\norm{u-u_*}^2/C_1 \le V(u) \le \norm{u-u_*}^2/C_2$ for some constants $C_1,C_2>0$;
\item $\inprod{\nabla V(u)}{F_*(u)} \le -\lambda \norm{u-u_*}^2$ for a constant $\lambda>0$;
\item $\norm{\nabla V(u)} \le L \norm{u-u_*}$ for a constant $L > 0$.
\end{enumerate}
Then the arc length of $\varphi$ is bounded by 
\[ \int_{\mathcal{T}} \norm{\dv{\varphi(t)}{t}} \dd{t} \le \frac{2\Lambda C_1^{1/2}}{\lambda C_2^{3/2}} \norm{\varphi(t_0)-u_*} + \left(1 + \frac{\Lambda L C_1}{\lambda C_2}\right) a\Big(\sup_{t\in\mathcal{T}}\norm{\varphi(t)}\Big) \int_{\mathcal{T}} r(t) \dd{t} .\]
\end{lemma}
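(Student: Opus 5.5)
The plan is a Lyapunov energy argument: control $\norm{\varphi(t)-u_*}$ by an integrated differential inequality for $V(\varphi(t))$, then bound the velocity. Write $w(t)=\varphi(t)-u_*$ and $\bar a = a(\sup_{t\in\mathcal{T}}\norm{\varphi(t)})$, which is finite by boundedness of the range and monotonicity of $a$. The velocity splits as $\dv*{\varphi}{t} = F_*(\varphi) + \{F(\varphi,t)-F_*(\varphi)\}$. Sub-linear growth of $F_*$ and the defect bound give
\[ \norm{\dv{\varphi(t)}{t}} \le \Lambda\norm{w(t)} + \bar a\, r(t). \]
The defect term integrates to $\bar a\int r$, which accounts for the ``$1$'' in the second coefficient. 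It therefore suffices to bound $\int_{\mathcal{T}}\norm{w(t)}\dd{t}$ by
\[ \frac{2C_1^{1/2}}{\lambda C_2^{3/2}}\norm{w(t_0)} + \frac{LC_1}{\lambda C_2}\,\bar a\int r. \]

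For that, I differentiate $v(t)=V(\varphi(t))$. Conditions 2 and 3 give
\[ v'(t) = \inprod{\nabla V(\varphi)}{F_*(\varphi)} + \inprod{\nabla V(\varphi)}{F-F_*} \le -\lambda\norm{w}^2 + L\norm{w}\,\bar a\, r(t). \]
I then pass to $\rho=v^{1/2}$, since a linear-in-$\norm{w}$ inequality integrates cleanly. Condition 1 gives $\norm{w}^2 \ge C_2 v$ and $\norm{w}\le C_1^{1/2}\rho$. Dividing by $2\rho$ where $\rho>0$ yields
\[ \rho' \le -\frac{\lambda}{2}\cdot\frac{\norm{w}^2}{\rho} + \frac{L\bar a\, r\,\norm{w}}{2\rho}. \]
Using $\norm{w}^2/\rho \ge C_2^{1/2}\norm{w}$, which follows from $\rho\le \norm{w}/C_2^{1/2}$, and $\norm{w}/\rho\le C_1^{1/2}$, I get
\[ \rho' \le -\frac{\lambda C_2^{1/2}}{2}\norm{w} + \frac{LC_1^{1/2}}{2}\bar a\, r. \]
Integrating from $t_0$ to $T$ and using $\rho\ge0$ gives
\[ \frac{\lambda C_2^{1/2}}{2}\int_{t_0}^T\norm{w} \le \rho(t_0) + \frac{LC_1^{1/2}}{2}\bar a\int r, \qquad \rho(t_0)\le \norm{w(t_0)}/C_2^{1/2}. \]
Letting $T\to\infty$ and multiplying by $\Lambda$ gives the first term exactly. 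The defect term comes out as $\Lambda L C_1^{1/2}\bar a\int r/(\lambda C_2^{1/2})$; this is dominated by the stated $\Lambda LC_1/(\lambda C_2)$ because $C_1\ge C_2$, which is forced by condition 1 whenever $w\neq0$.

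The main obstacle is technical: $\rho=V^{1/2}$ is not differentiable where $V=0$, and $\rho'$ is only defined a.e. I would handle this in one of two ways. The first is to work with $\rho_\epsilon=(V+\epsilon)^{1/2}$, repeating the inequalities with $\rho$ replaced by $\rho_\epsilon\ge\rho$; the dissipative term then needs $\norm{w}^2/\rho_\epsilon \ge C_2^{1/2}\norm{w}-$ (an $O(\epsilon^{1/2})$ error), which must be checked to integrate harmlessly. The second is to argue directly that on the set $\{w=0\}$ the bound is trivial. If the $\epsilon$-regularization error term turns out not to be integrable over an infinite horizon, I would fall back to the second route. That route uses the absolute continuity of $\rho$ (it is locally Lipschitz, being a composition with a bounded trajectory), together with the fact that the integrand $\norm{w}$ vanishes on the zero set.
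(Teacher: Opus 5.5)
Your argument is correct. It shares the paper's skeleton: differentiate $V(\varphi(t))$, pass to $V^{1/2}$, and bound the speed by $\Lambda\|\varphi(t)-u_*\|+\bar a\,r(t)$. Where you differ is in how the key inequality is integrated.

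The paper converts both sides to $V$ and gets $\frac{d}{dt}V^{1/2}\le-\frac{\lambda C_2}{2}V^{1/2}+\frac{\bar a LC_1^{1/2}}{2}r$. It solves this with an integrating factor, giving a pointwise Duhamel bound on $V^{1/2}(\varphi(t))$, integrates that bound in $t$ using Fubini, and only at the end returns to $\|w\|\le C_1^{1/2}V^{1/2}$.

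You instead keep $\|w\|$ in the dissipative term and telescope, discarding $\rho(T)\ge0$. This is shorter, needs no Fubini step, and avoids the lossy round trip $\|w\|\to V^{1/2}\to\|w\|$. You actually get $\frac{2\Lambda}{\lambda C_2}\|w(t_0)\|+\bigl(1+\frac{\Lambda LC_1^{1/2}}{\lambda C_2^{1/2}}\bigr)\bar a\int r$. So the first coefficient, and the $\Lambda L$-part of the second, are smaller than the stated ones by the factor $(C_2/C_1)^{1/2}\le1$.

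In particular, your first term is not ``exactly'' the stated one, as you wrote. It is dominated by it through the same $C_1\ge C_2$ observation you use for the second term. What the paper's route gives in addition is a pointwise exponential-decay estimate for $V^{1/2}(\varphi(t))$, which the arc-length bound does not need.

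You are also more careful than the paper at the zeros of $V$; the paper divides by $2V^{1/2}$ without comment. Your first fix already works, so no fallback is needed. Because you integrate over $[t_0,T]$ first, the regularization error is harmless. Using $\|w\|^2/\rho_\epsilon\ge C_2^{1/2}\|w\|-C_2\epsilon^{1/2}$, that error is at most $\frac{\lambda C_2}{2}\epsilon^{1/2}(T-t_0)$. It vanishes as $\epsilon\to0$ at fixed $T$, and only then do you send $T\to\infty$.

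The fundamental theorem of calculus for the everywhere differentiable $\rho_\epsilon$ is justified by
$|\rho_\epsilon'|\le\frac12LC_1^{1/2}\|\varphi'\|\le\frac12LC_1^{1/2}(\Lambda\|w\|+\bar a r)$, which is integrable on $[t_0,T]$. By contrast, the fallback's claim that $\rho$ is locally Lipschitz can fail when $r$ is integrable but unbounded. So the regularized route is the one to keep.
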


The Lyapunov function is often implied by the physical or energetic structure of the dynamical system. In practice, one may simply try the ansatz that $V(u)$ be a quadratic form of $u-u_*$, especially for approximately linear dynamics. 
Note that an analogue of Proposition~\ref{prop:length_bound} can be deduced from Lemma~\ref{lem:length-ode}.

\section{Additional numerical results}
\label{appn:numeric}
This section consists of Algorithm~\ref{alg:RK4} and Figures \ref{fig:Euler} and \ref{fig:RK4} to complement Section~\ref{sec:numerical}.

\begin{algorithm}[!ht]
\caption{Runge--Kutta method of order $4$ to solve \eqref{eq:gradflow_emp} and \eqref{eq:ode-empirical}}
\label{alg:RK4}
\KwIn{Data $Z_1,\dots,Z_n$, initial value $\theta_0$, step size $\delta$.}
Initialization: set $J\gets 0$, $\hat{\theta}(0)\gets \theta_0$, and $\hat{\Phi}_0(Z_i) \gets 0_d$ for $i=1,\dots,n$\;
\Repeat{a stopping criterion is met.}{
Compute the gradients $g_i^{(1)} \gets \psi_\theta(Z_i)$, $i=1,\dots,n$, at $\theta = \hat{\theta}(J\delta)$\;
Construct an estimate $\hat{H}^{(1)}$ for the population Hessian $H(\theta)$ at $\theta = \hat{\theta}(J\delta)$\;
Update 
\begin{align*}
& \hat{\theta}^{(1)} \gets \hat{\theta}(J\delta) - (\delta/2) \cdot n^{-1}\sum_{i=1}^{n} g_i^{(1)} ;\\
& \hat{\Phi}^{(1)}(Z_i) \gets \hat{\Phi}_{J\delta}(Z_i) + (\delta/2) \cdot \{g_i^{(1)} - \hat{H}^{(1)} \hat{\Phi}_{J\delta}(Z_i)\} ,\quad i=1,\dots,n ;
\end{align*}
Compute the gradients $g_i^{(2)} \gets \psi_\theta(Z_i)$, $i=1,\dots,n$, at $\theta = \hat{\theta}^{(1)}$\;
Construct an estimate $\hat{H}^{(2)}$ for the population Hessian $H(\theta)$ at $\theta = \hat{\theta}^{(1)}$\;
Update 
\begin{align*}
& \hat{\theta}^{(2)} \gets \hat{\theta}(J\delta) - (\delta/2) \cdot n^{-1}\sum_{i=1}^{n} g_i^{(2)} ;\\
& \hat{\Phi}^{(2)}(Z_i) \gets \hat{\Phi}_{J\delta}(Z_i) + (\delta/2) \cdot \{g_i^{(2)} - \hat{H}^{(2)} \hat{\Phi}^{(1)}(Z_i)\} ,\quad i=1,\dots,n ;
\end{align*}
Compute the gradients $g_i^{(3)} \gets \psi_\theta(Z_i)$, $i=1,\dots,n$, at $\theta = \hat{\theta}^{(2)}$\;
Construct an estimate $\hat{H}^{(3)}$ for the population Hessian $H(\theta)$ at $\theta = \hat{\theta}^{(2)}$\;
Update 
\begin{align*}
& \hat{\theta}^{(3)} \gets \hat{\theta}(J\delta) - \delta \cdot n^{-1}\sum_{i=1}^{n} g_i^{(3)} ;\\
& \hat{\Phi}^{(3)}(Z_i) \gets \hat{\Phi}_{J\delta}(Z_i) + \delta \cdot \{g_i^{(3)} - \hat{H}^{(3)} \hat{\Phi}^{(2)}(Z_i)\} ,\quad i=1,\dots,n ;
\end{align*}
Compute the gradients $g_i^{(4)} \gets \psi_\theta(Z_i)$, $i=1,\dots,n$, at $\theta = \hat{\theta}^{(3)}$\;
Construct an estimate $\hat{H}^{(4)}$ for the population Hessian $H(\theta)$ at $\theta = \hat{\theta}^{(3)}$\;
Update 
\begin{align*}
& g_i \gets \{g_i^{(1)}+2g_i^{(2)}+2g_i^{(3)}+g_i^{(4)}\}/6 ,\quad i=1,\dots,n ;\\
& h_i \gets \{\hat{H}^{(1)} \hat{\Phi}_{J\delta}(Z_i) + 2 \hat{H}^{(2)} \hat{\Phi}^{(1)}(Z_i) + 2 \hat{H}^{(3)} \hat{\Phi}^{(2)}(Z_i) + \hat{H}^{(4)} \hat{\Phi}^{(3)}(Z_i)\} / 6 ,\quad i=1,\dots,n ;\\
& \hat{\theta}((J+1)\delta) \gets \hat{\theta}(J\delta) - \delta \cdot n^{-1}\sum_{i=1}^{n} g_i ;\\
& \hat{\Phi}_{(J+1)\delta}(Z_i) \gets \hat{\Phi}_{J\delta}(Z_i) + \delta \cdot (g_i - h_i) ,\quad i=1,\dots,n ;\\
& J \gets J+1 ;
\end{align*}
}
\KwOut{Trajectories of estimators $\hat{\theta}(t)$ and $\hat{\Phi}_t(Z_i)$, $i=1,\dots,n$, with $t=j\delta$, $j=1,\dots,J$.}
\end{algorithm}

\begin{figure}[!ht]
    \centering
\includegraphics[width=0.69\linewidth]{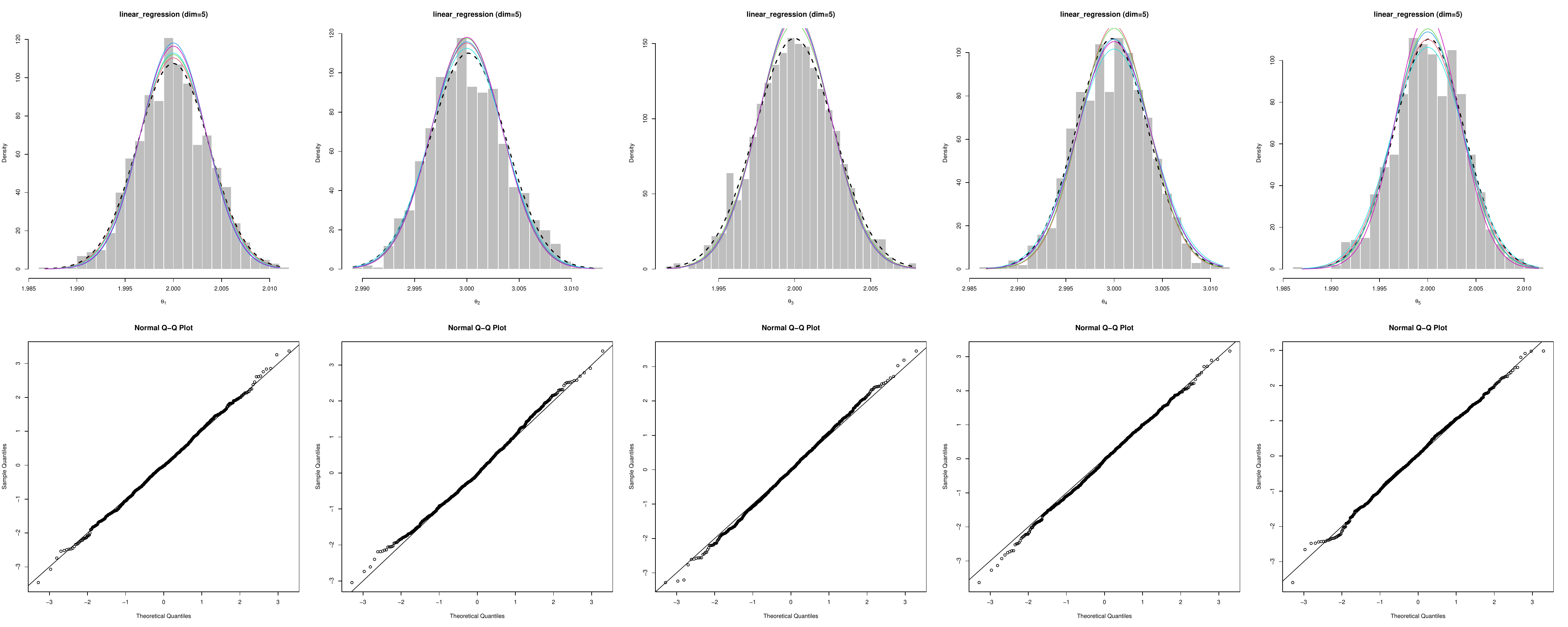}
\includegraphics[width=0.69\linewidth]{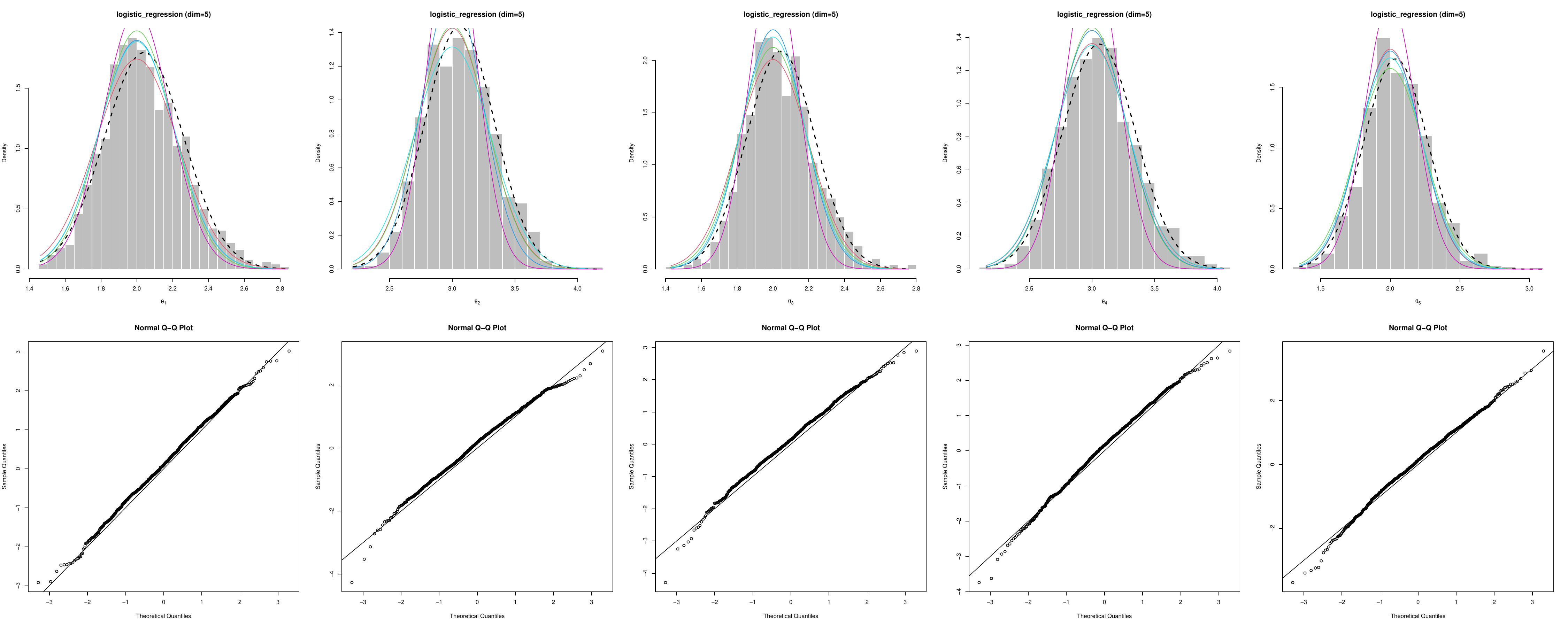}
\includegraphics[width=0.69\linewidth]{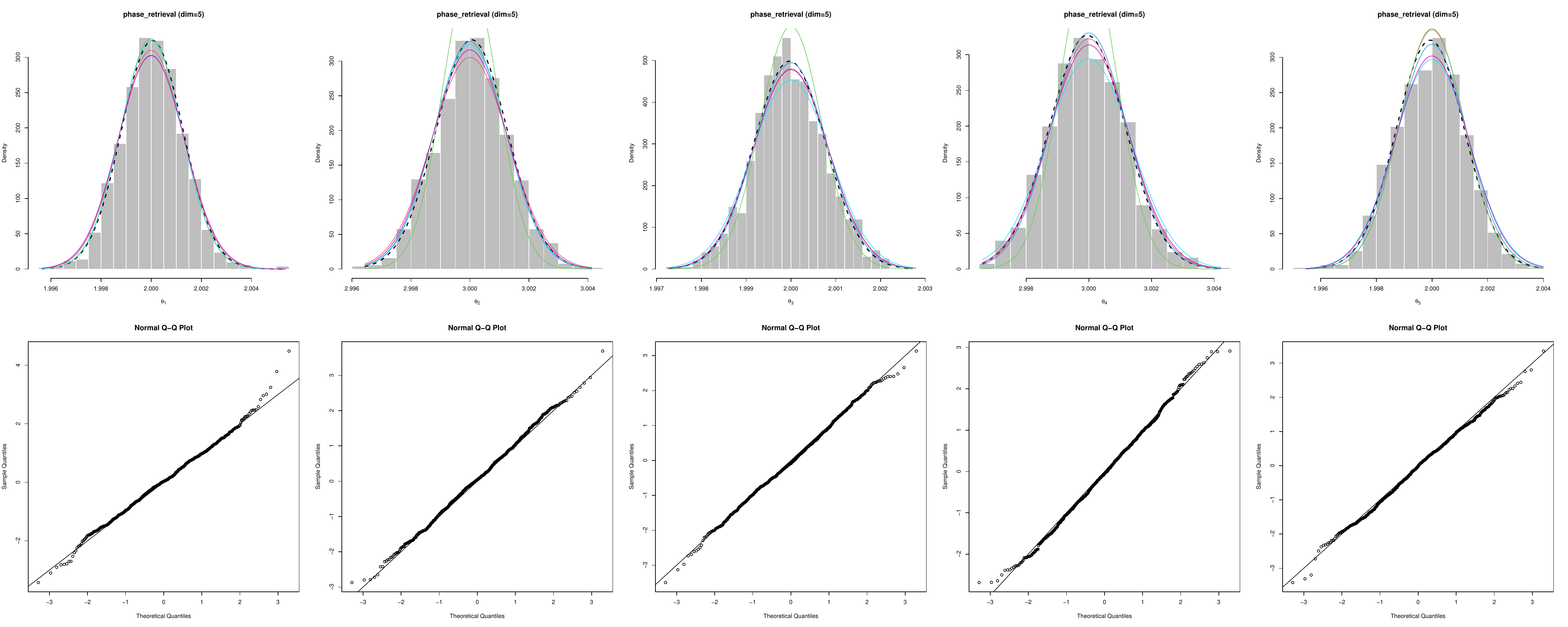}
\includegraphics[width=0.69\linewidth]{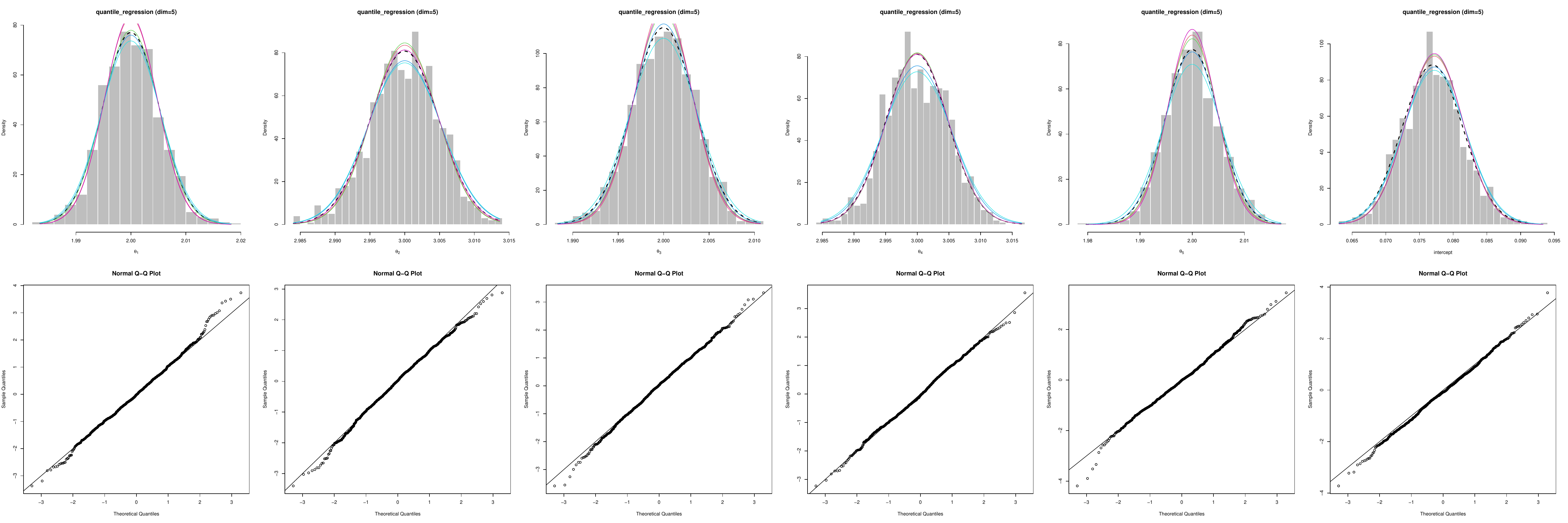}
\includegraphics[width=0.69\linewidth]{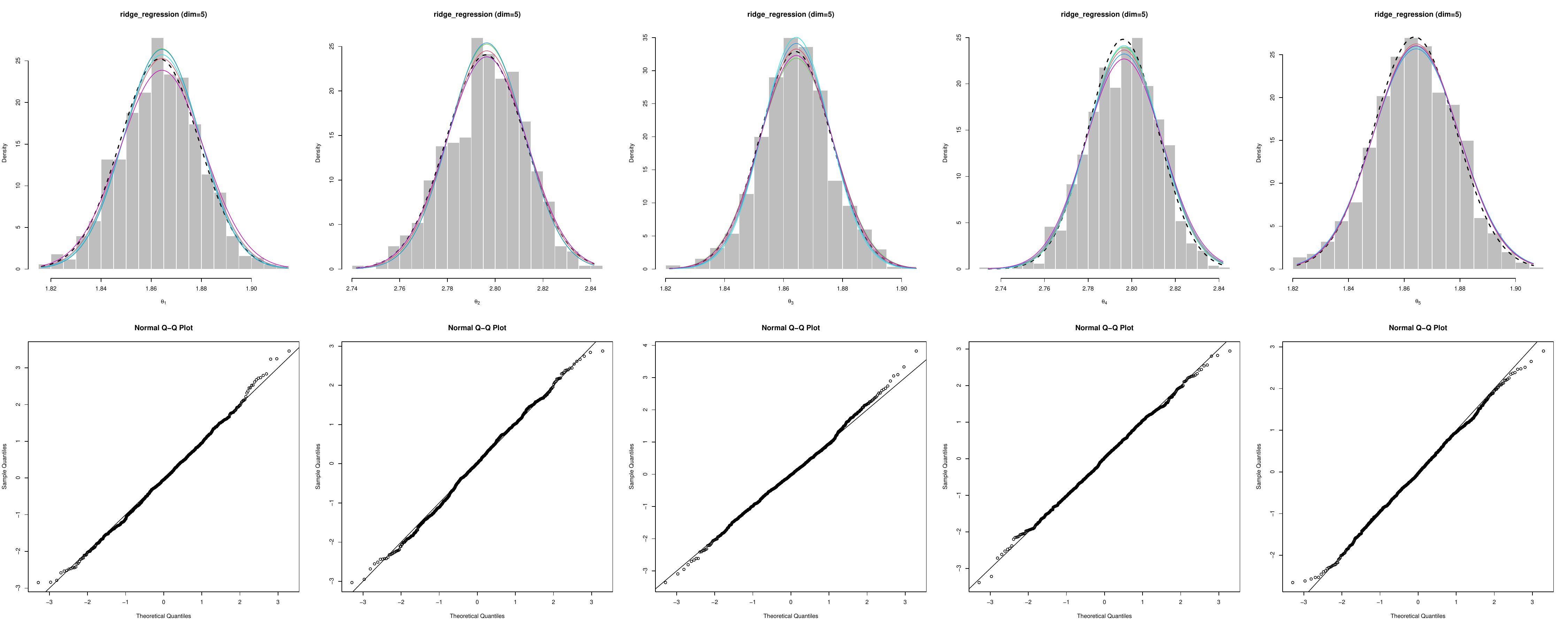}
\caption{\small Stationary points of gradient flows solved by Algorithm~\ref{alg:Euler} in linear regression, logistic regression, phase retrieval, quantile regression, and ridge regression (from top to bottom). Each column corresponds to a coordinate, where the dashed black line is the normal density plot with mean and variance based on Monte Carlo replications, the solid colored lines are the normal density plots with mean to be the population truth and variance to be the estimates in several Monte Carlo runs, and the Q-Q plot consists of the z-scores \eqref{eq:z-score} arising from Monte Carlo replications versus theoretical normal quantiles.}
\label{fig:Euler}
\end{figure}

\begin{figure}[!ht]
    \centering
\includegraphics[width=0.69\linewidth]{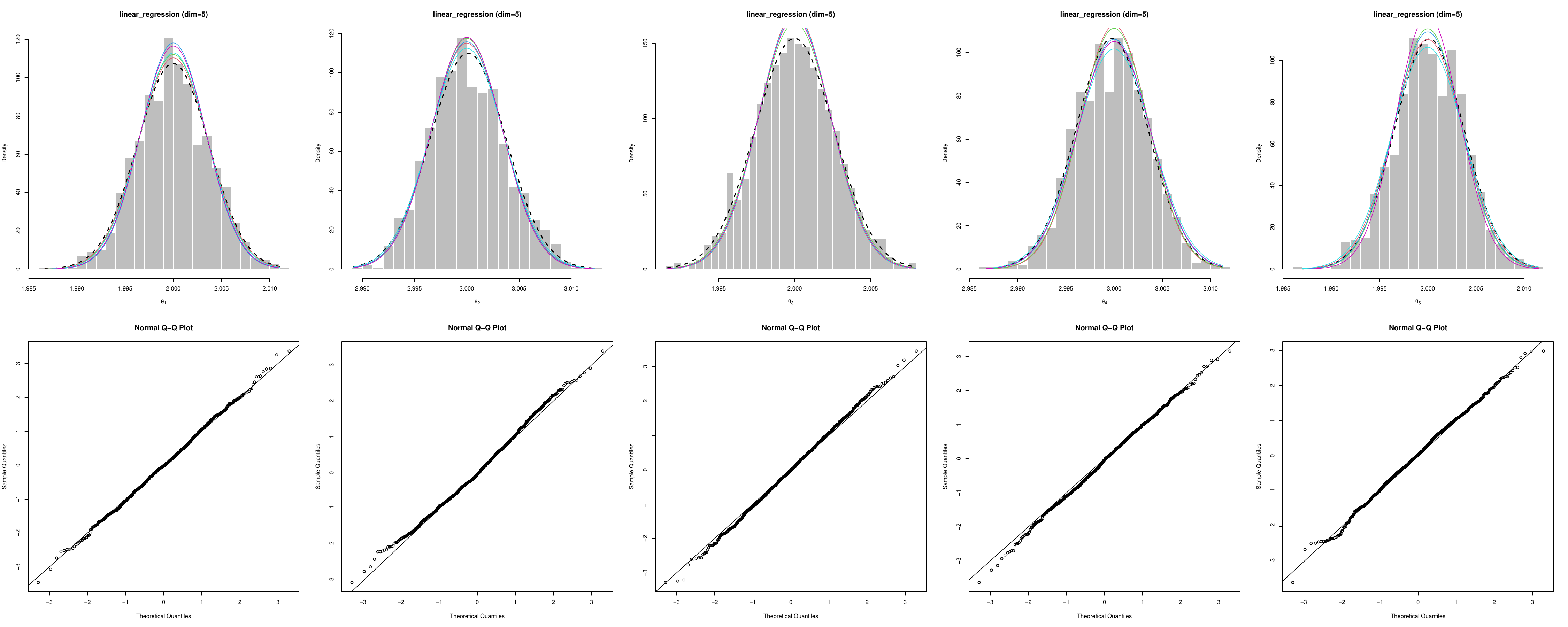}
\includegraphics[width=0.69\linewidth]{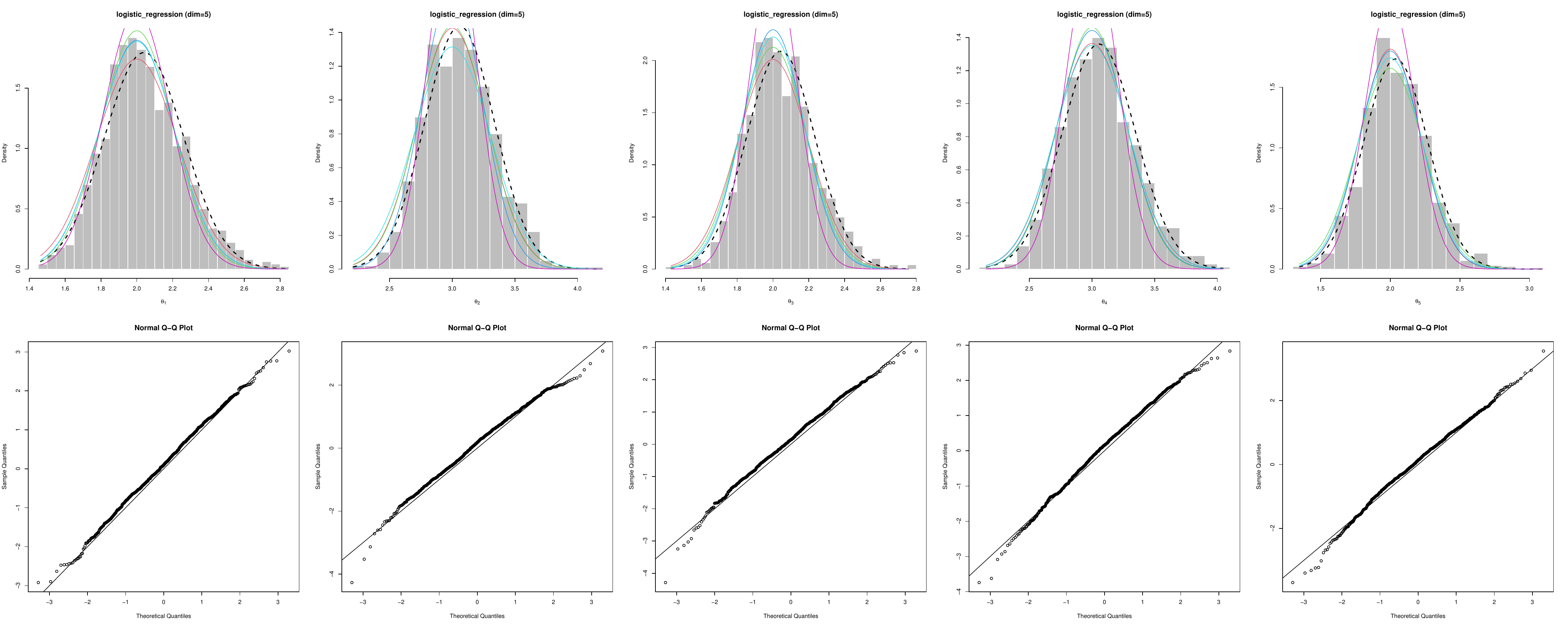}
\includegraphics[width=0.69\linewidth]{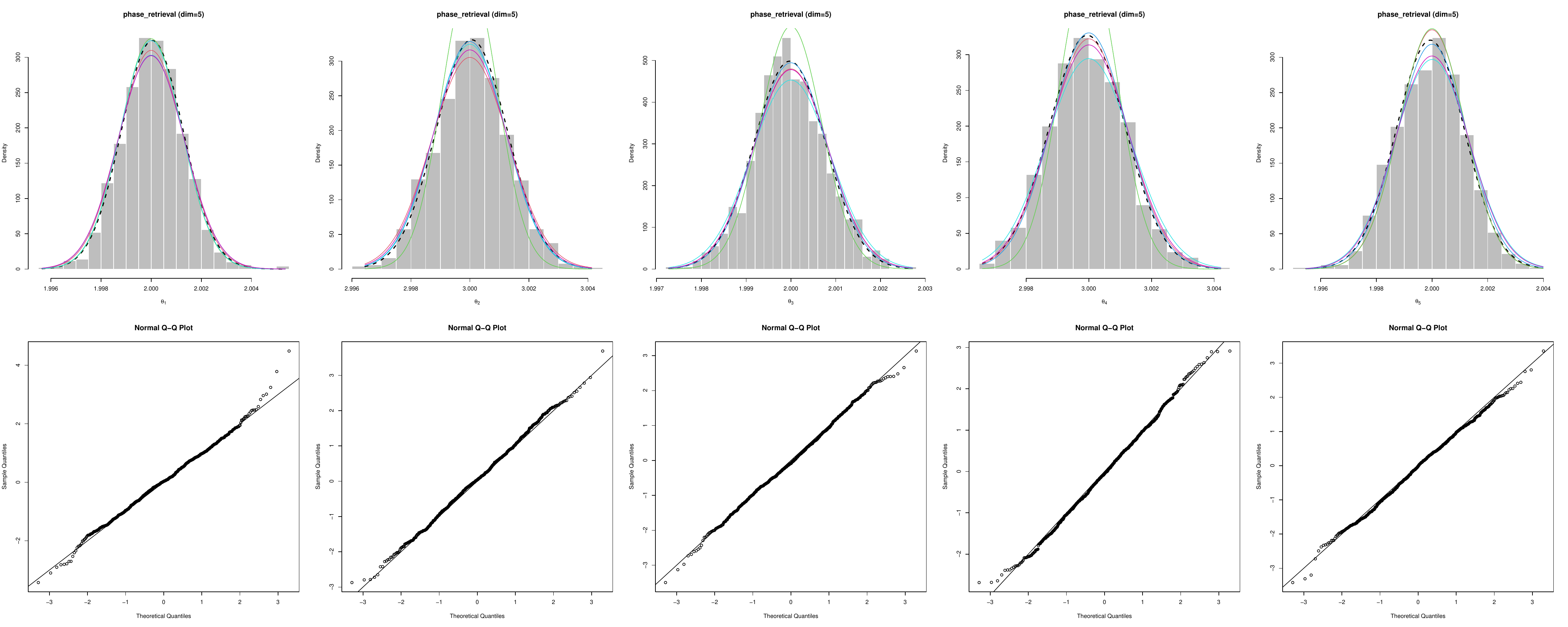}
\includegraphics[width=0.69\linewidth]{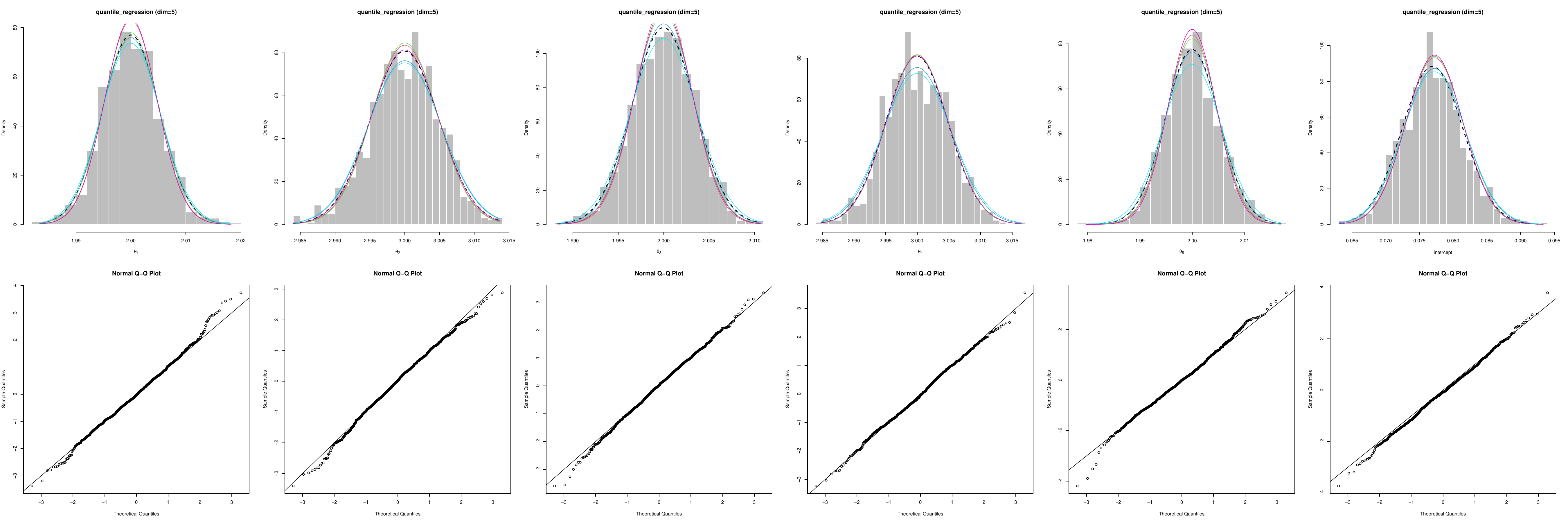}
\includegraphics[width=0.69\linewidth]{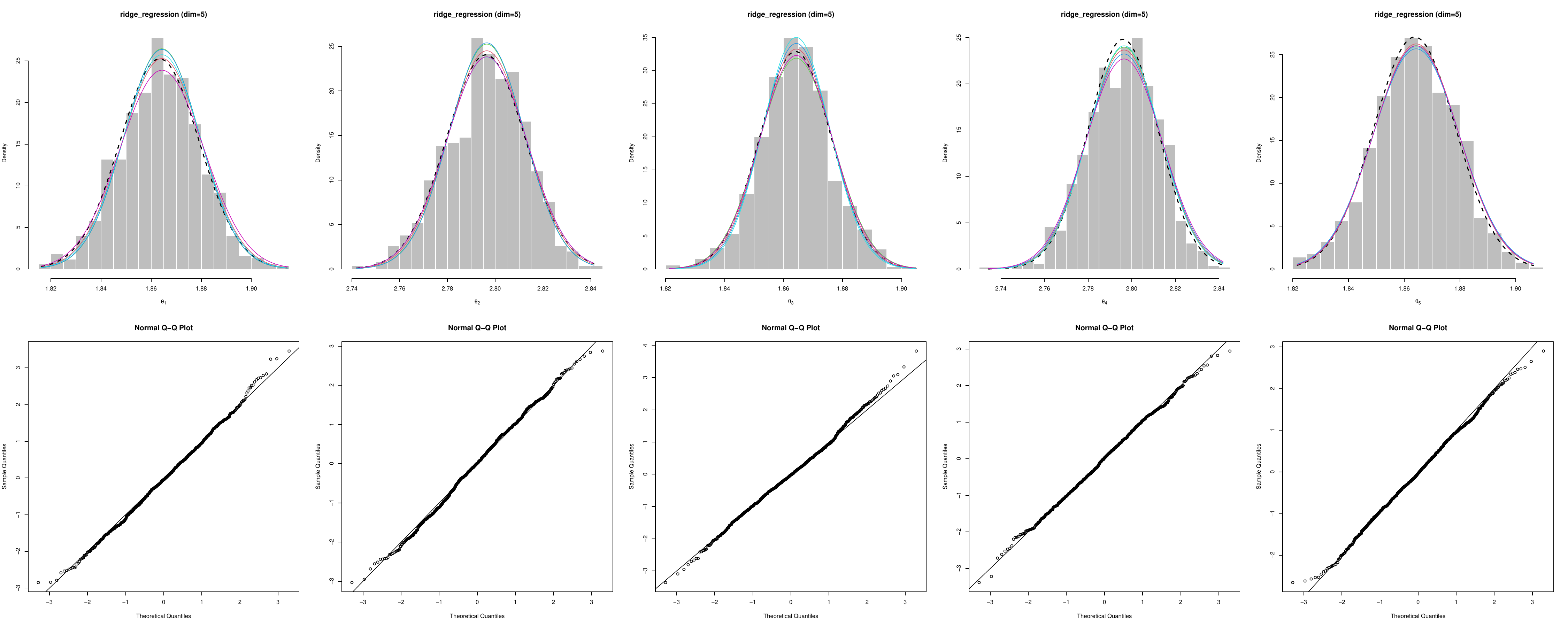}
\caption{\small Stationary points of gradient flows solved by Algorithm~\ref{alg:RK4} in linear regression, logistic regression, phase retrieval, quantile regression, and ridge regression (from top to bottom). Each column corresponds to a coordinate, where the dashed black line is the normal density plot with mean and variance based on Monte Carlo replications, the solid colored lines are the normal density plots with mean to be the population truth and variance to be the estimates in several Monte Carlo runs, and the Q-Q plot consists of the z-scores \eqref{eq:z-score} arising from Monte Carlo replications versus theoretical normal quantiles.}
\label{fig:RK4}
\end{figure}

\section*{Acknowledgments}
This research is supported by grant WBS A-0009983-00-00.

\clearpage
\forcsvlist{\renewSformat}{assumption,lemma,theorem,corollary,equation,section,table,figure,algocf}

\begin{center}
{\LARGE Supplement to ``Statistical Inference on Gradient Flows''}\\[2ex]
{\large Tongyu Li and Alexander Giessing}\\[5ex]
\end{center}

This supplementary material contains auxiliary lemmas and the proofs of all theoretical results.

\section{Additional Notation and Auxiliary Lemmas}

In a linear space of real-valued functions with (semi-)norm $\norm{\cdot}$, a bracket $[l,u]$ defined using two functions $l$ and $u$ is the set of all functions $f$ with $l \le f \le u$ (pointwise), whose size is $\norm{u-l}$. 
The bracketing number of a function class $\mathcal{F}$ with level $\varepsilon>0$ under $\norm{\cdot}$, denoted by $N_{[]}(\varepsilon,\mathcal{F},\norm{\cdot})$, is the minimum number of brackets needed to cover $\mathcal{F}$ with size not greater than $\varepsilon$ under $\norm{\cdot}$.

\begin{lemma}\label{lem:bracket}
Let $\varphi : \mathcal{T} \to \mathcal{F}$ be a surjective mapping with derivative $\dv*{\varphi(t)}{t} = \{z \mapsto \pdv*{\varphi(t)(z)}{t}\}$, where $\mathcal{T} \subset \R$ is an interval and $\mathcal{F}$ is contained in a Banach space of real-valued functions endowed with monotonic norm $\norm{\cdot}$ in the sense that $\norm{f}\le\norm{g}$ whenever $\abs{f}\le\abs{g}$. 
Then for every $\varepsilon>0$, the bracketing number of $\mathcal{F}$ under $\norm{\cdot}$ has the upper bound 
\[ N_{[]}(\varepsilon,\mathcal{F},\norm{\cdot}) \le \lceil 2\ell(\varphi)/\varepsilon \rceil ,\]
where $\ell(\varphi) = \int_\mathcal{T} \norm{\dv*{\varphi(t)}{t}} \dd{t}$ is the arc length of $\varphi$.
\end{lemma}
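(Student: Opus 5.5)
We partition the interval $\mathcal{T}$ by arc length and use, on each piece, the envelope produced by integrating the derivative. If $\ell(\varphi)=\infty$ there is nothing to prove, and if $\ell(\varphi)=0$ then $\varphi$ is constant, so one bracket suffices. We therefore assume $0<\ell<\infty$.

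Set $\ell=\ell(\varphi)$ and $\delta=\varepsilon/2$. Define the nondecreasing, continuous arc-length function $\sigma(t)=\int_{\inf\mathcal{T}}^{t}\norm{\dv*{\varphi(u)}{u}}\dd{u}$. Put $N=\lceil 2\ell/\varepsilon\rceil$. Choose points $\inf\mathcal{T}=\tau_0\le\tau_1\le\dots\le\tau_N=\sup\mathcal{T}$ with $\sigma(\tau_j)=j\ell/N\wedge\ell$; these exist by the intermediate value theorem, with closure or limits at the endpoints if $\mathcal{T}$ is open or unbounded. This splits $\mathcal{T}$ into $N$ subintervals $I_j=[\tau_{j-1},\tau_j]\cap\mathcal{T}$, each with $\int_{I_j}\norm{\dv*{\varphi}{u}}\dd{u}\le\ell/N\le\delta$.

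On $I_j$ we fix an anchor $s_j\in I_j$. For the envelope we take $g_j(z)=\int_{I_j}\abs{\pdv*{\varphi(u)(z)}{u}}\dd{u}$. Pointwise in $z$, the fundamental theorem of calculus, assumed valid pointwise because the derivative exists, gives $\abs{\varphi(t)(z)-\varphi(s_j)(z)}\le g_j(z)$ for every $t\in I_j$. Hence $\varphi(I_j)\subset[\varphi(s_j)-g_j,\varphi(s_j)+g_j]$, a bracket of size $\norm{2g_j}$.

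The key estimate is $\norm{g_j}\le\int_{I_j}\norm{\dv*{\varphi(u)}{u}}\dd{u}$. This is Minkowski's integral inequality, which is the triangle inequality for the Bochner integral in a Banach space. The monotonicity of the norm is used so that the pointwise domination $\abs{f}\le g_j$ transfers to the norm, and so that passing to absolute values inside the integral does not increase the norm of the derivative. The main obstacle is justifying this step: we need the map $u\mapsto\dv*{\varphi(u)}{u}$ to be Bochner integrable, and its pointwise evaluation to agree with the Bochner integral. We will handle this by approximating $g_j$ by Riemann sums $\sum_k\abs{\dv*{\varphi(u_k)}{u}}\Delta u_k$, applying the triangle inequality and monotonicity to each sum, and passing to the limit with Fatou's property of the norm, which holds for $L^p(P)$, the case of interest. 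Given this, each bracket has size at most $2\delta=\varepsilon$.

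Since $\varphi$ is surjective, the $N$ brackets cover $\mathcal{F}$, which gives $N_{[]}(\varepsilon,\mathcal{F},\norm{\cdot})\le\lceil 2\ell(\varphi)/\varepsilon\rceil$.
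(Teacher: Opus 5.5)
Your argument is essentially the paper's proof: split $\mathcal{T}$ into $\lceil 2\ell(\varphi)/\varepsilon\rceil$ pieces of arc length at most $\varepsilon/2$, bracket each piece by $\varphi(\text{anchor})\pm V$, where $V(z)=\int\abs{\partial_u\varphi(u)(z)}\dd{u}$ is the pointwise total variation over the piece, and bound $\norm{V}$ by the arc length of the piece (the paper via Cartan's vectorial mean value theorem, you via Minkowski's integral inequality). In the $L^p(P)$ case, apply Minkowski's integral inequality directly rather than through Riemann sums, which can fail because a derivative need not be Riemann integrable; the joint measurability it needs, of $(u,z)\mapsto\partial_u\varphi(u)(z)$, follows from difference quotients. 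Also, your one-bracket conclusion when $\ell(\varphi)=0$ is correct; it is the stated bound $\lceil 0\rceil=0$ that is off by one there.
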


A function class $\mathcal{F}$ is said to have a function $F$ as its envelope if $\abs{f} \le F$ for all $f\in\mathcal{F}$.
\begin{lemma}\label{lem:product-bracket}
Let $\mathcal{F}$ and $\mathcal{G}$ be function classes with envelopes $F$ and $G$, respectively. Then for every $\varepsilon>0$ and $1/r = 1/p + 1/q$, the bracketing number of $\mathcal{F}\cdot\mathcal{G} = \{fg : f\in\mathcal{F},\,g\in\mathcal{G}\}$ satisfies 
\[ N_{[]}(2\varepsilon,\mathcal{F}\cdot\mathcal{G},L^r(P)) \le N_{[]}(\varepsilon/\norm{G}_{L^q(P)},\mathcal{F},L^p(P)) \, N_{[]}(\varepsilon/\norm{F}_{L^p(P)},\mathcal{G},L^q(P)) .\]
\end{lemma}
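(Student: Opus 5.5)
The plan is to build brackets for the product class directly from brackets for the two factor classes. The argument is the standard product-bracketing construction; the only real work is keeping track of signs.

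First, cover $\mathcal{F}$ by $N_1 = N_{[]}(\varepsilon/\norm{G}_{L^q(P)},\mathcal{F},L^p(P))$ brackets $[l_i,u_i]$, each of $L^p(P)$-size at most $\varepsilon/\norm{G}_{L^q(P)}$. Likewise cover $\mathcal{G}$ by $N_2$ brackets $[l'_j,u'_j]$, each of $L^q(P)$-size at most $\varepsilon/\norm{F}_{L^p(P)}$. Replacing $l_i,u_i$ by $\max(l_i,-F)$ and $\min(u_i,F)$ keeps every member of $\mathcal{F}$ in its bracket (since $\abs{f}\le F$) and does not enlarge its size. So we may assume $\abs{l_i},\abs{u_i}\le F$, and similarly $\abs{l'_j},\abs{u'_j}\le G$.

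Next, for each pair $(i,j)$ we need a bracket for products $fg$ with $f\in[l_i,u_i]$ and $g\in[l'_j,u'_j]$. Pointwise, $fg$ lies between the minimum and the maximum of the four products $l_il'_j,\,l_iu'_j,\,u_il'_j,\,u_iu'_j$, because a bilinear function on a rectangle attains its extremes at the corners. So we define $L_{ij}$ as the pointwise minimum of these four products and $U_{ij}$ as their pointwise maximum. This produces at most $N_1N_2$ brackets covering $\mathcal{F}\cdot\mathcal{G}$.

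The remaining step, and the only delicate one, is the size bound. Pointwise, $U_{ij}-L_{ij}$ is the difference of two corner products. Any two corner products $ab$ and $a'b'$ satisfy
\[ \abs{ab-a'b'}\le \abs{a-a'}\abs{b}+\abs{a'}\abs{b-b'} . \]
Using the truncation bounds $\abs{b}\le G$, $\abs{a'}\le F$, $\abs{a-a'}\le u_i-l_i$ and $\abs{b-b'}\le u'_j-l'_j$, this gives
\[ U_{ij}-L_{ij}\le (u_i-l_i)G+F(u'_j-l'_j) . \]
Hölder's inequality with $1/r=1/p+1/q$ then bounds the $L^r(P)$-size of the bracket by
\[ \norm{u_i-l_i}_{L^p(P)}\norm{G}_{L^q(P)}+\norm{F}_{L^p(P)}\norm{u'_j-l'_j}_{L^q(P)}\le 2\varepsilon . \]
This yields the claimed inequality.

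For degenerate cases where an envelope norm is zero or infinite, the statement is trivial or uses the convention $\varepsilon/0=\infty$ (one bracket suffices), and I would note this briefly.
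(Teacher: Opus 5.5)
Your proposal is correct and follows the paper's proof essentially step for step: brackets $[L_{ij},U_{ij}]$ built from the pointwise min and max of the four corner products, the splitting $\abs{ab-a'b'}\le\abs{a-a'}\abs{b}+\abs{a'}\abs{b-b'}$, and a final Minkowski/H\"older step with $1/r=1/p+1/q$. Your preliminary truncation of the bracket endpoints to $[-F,F]$ and $[-G,G]$ is a genuine improvement. The paper's proof uses $\abs{l_j},\abs{u_j}\le F$ and $\abs{l_k'},\abs{u_k'}\le G$ without justification, and bracket endpoints need not be dominated by the envelope in general.
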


The covering number of a (pseudo-)metric space $(\mathcal{T},\rho)$ with level $\varepsilon>0$, denoted by $N(\varepsilon,\mathcal{T},\rho)$, is the minimum number of balls $\mathcal{B}(t,\varepsilon) = \{s : \rho(s,t) \le \varepsilon\}$ of radius $\varepsilon$ needed to cover $\mathcal{T}$.
It is convenient to write $N(\varepsilon,\mathcal{T},\rho) = N(\varepsilon,\mathcal{T},\norm{\cdot})$ if $\rho$ is induced by a (semi-)norm $\norm{\cdot}$. 
\begin{lemma}\label{lem:cover}
Let $\varphi : \mathcal{T} \to E$ be a differentiable mapping, where $\mathcal{T} \subset \R$ is an interval and $E$ is a Banach space endowed with norm $\norm{\cdot}$. Define the induced pseudometric $\rho$ on $\mathcal{T}$ by 
\[ \rho(s,t) = \norm{\varphi(s)-\varphi(t)} .\]
Then for every $\varepsilon>0$, the covering number of $(\mathcal{T},\rho)$ has the upper bound 
\[ N(\varepsilon/2,\mathcal{T},\rho) \le \lceil \ell(\varphi)/\varepsilon \rceil ,\]
where $\ell(\varphi) = \int_\mathcal{T} \norm{\dv*{\varphi(t)}{t}} \dd{t}$ is the arc length of $\varphi$.
\end{lemma}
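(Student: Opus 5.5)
We prove Lemma~\ref{lem:cover} by a greedy partition of $\mathcal{T}$ by arc length. If $\ell(\varphi)=\infty$ there is nothing to prove, so assume $\ell=\ell(\varphi)<\infty$.

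Define the cumulative arc length $S(t)=\int_{\inf\mathcal{T}}^{t}\norm{\dv*{\varphi(u)}{u}}\dd{u}$ for $t\in\mathcal{T}$. This function is nondecreasing, continuous, and takes values in $[0,\ell]$. The basic estimate is the fundamental theorem of calculus for Banach-valued maps: for $s\le t$ in $\mathcal{T}$,
\[ \rho(s,t)=\norm{\varphi(t)-\varphi(s)}=\norm{\int_s^t \dv{\varphi(u)}{u}\dd{u}}\le \int_s^t\norm{\dv{\varphi(u)}{u}}\dd{u}=S(t)-S(s). \]
This requires $\varphi$ to be absolutely continuous, i.e. a Bochner-integrable derivative with $\varphi(t)-\varphi(s)=\int_s^t\varphi'$. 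This regularity is the main subtle point, since mere differentiability does not automatically give it. I would either read ``differentiable'' as continuously differentiable, as in all the paper's applications, or invoke the mean value inequality for Banach-valued differentiable maps. The latter gives $\norm{\varphi(t)-\varphi(s)}\le (t-s)\sup_{[s,t]}\norm{\varphi'}$, and refining over partitions yields the same bound by the integral whenever $\norm{\varphi'}$ is Riemann integrable.

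Next, set $N=\lceil \ell/\varepsilon\rceil$. If $\ell=0$, then $\varphi$ is constant and $\rho\equiv0$, so one ball suffices; here I interpret $\lceil 0\rceil$ as at least one, or treat this trivial case separately. For $\ell>0$, partition $[0,\ell]$ into $N$ consecutive closed intervals $I_k=[(k-1)\ell/N,\,k\ell/N]$, each of length $\ell/N\le\varepsilon$. Let $J_k=S^{-1}(I_k)\subset\mathcal{T}$. Each $J_k$ is an interval because $S$ is monotone, and together the $J_k$ cover $\mathcal{T}$. Discard any empty $J_k$ and choose the center $t_k\in J_k$ with $S(t_k)$ closest to the midpoint of $I_k$. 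If the range of $S$ meets $I_k$ in a subinterval, a point achieving this can be approximated; it is simplest to use continuity of $S$ and pick $t_k$ with $S(t_k)$ equal to the midpoint of $S(J_k)$.

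Then for every $t\in J_k$ we have $\rho(t,t_k)\le\abs{S(t)-S(t_k)}\le \tfrac12\abs{S(J_k)}\le \ell/(2N)\le\varepsilon/2$. Hence the $\rho$-balls $\mathcal{B}(t_k,\varepsilon/2)$, $k=1,\dots,N$, cover $\mathcal{T}$, giving $N(\varepsilon/2,\mathcal{T},\rho)\le N=\lceil\ell(\varphi)/\varepsilon\rceil$.

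The midpoint choice is what halves the radius; if $\mathcal{T}$ is open or unbounded, the closure issues are handled by the continuity of $S$ on $\mathcal{T}$ together with the fact that $S(J_k)$ is an interval. So the only real obstacle is the justification of $\rho(s,t)\le S(t)-S(s)$ in the first step.
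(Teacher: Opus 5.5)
Your proposal is correct and is essentially the paper's own argument. Both proofs bound $\rho(s,t)$ by the arc length $\int_s^t \norm{\varphi'(u)}\dd{u}$ (the paper cites Cartan's vectorial mean value theorem for this), cut the curve into consecutive pieces of arc length at most $\varepsilon$, and place the centers at arc-length midpoints so every point lies within $\varepsilon/2$ of a center. You are slightly more careful than the paper on the edge case $\ell(\varphi)=0$, where $\lceil 0\rceil=0$ must be read as $1$, and on attaining the midpoints via continuity of $S$. Your regularity worry is milder than you suggest: for everywhere-differentiable $\varphi$ with $\ell(\varphi)<\infty$, apply the real-variable theorem (an everywhere differentiable function with $L^1$ derivative is absolutely continuous) to $\Lambda\circ\varphi$ for $\Lambda\in E^*$ with $\norm{\Lambda}\le 1$, then take the supremum via Hahn--Banach.
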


We also need the following lemma to bound entropy integrals. 
\begin{lemma}\label{lem:ent_int}
For any $a,b > 0$, 
\[ \int_0^a \log^{1/2}(1+b/\varepsilon) \dd{\varepsilon} \le 2 a\log^{1/2}(1+b/a) .\]
\end{lemma}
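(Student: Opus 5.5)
The plan is to compare both sides as functions of the upper limit $a$ and show that their difference is nondecreasing and vanishes at $0$. A direct pointwise approach loses too much when $b/a$ is small: the bound $\log(1+b/\varepsilon)\le\log(1+b/a)+\log(a/\varepsilon)$ leaves an additive constant $\Gamma(3/2)$ that is not absorbed.

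Fix $b>0$ and set $f(\varepsilon)=\log^{1/2}(1+b/\varepsilon)$ for $\varepsilon>0$. This function is positive, decreasing and smooth. It is integrable near $0$ because it grows only like $\log^{1/2}(1/\varepsilon)$. Define
\[ F(a) = 2af(a) - \int_0^a f(\varepsilon)\dd{\varepsilon}, \qquad a>0 . \]
The lemma is exactly the claim $F(a)\ge0$. First, $F(a)\to0$ as $a\searrow0$. Indeed, $af(a)=a\log^{1/2}(1+b/a)\to0$, and the integral tends to $0$ by integrability of $f$.

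Next I differentiate. We have $F'(a)=2f(a)+2af'(a)-f(a)=f(a)+2af'(a)$. By the chain rule,
\[ f'(a) = \frac{1}{2f(a)}\cdot\frac{-b/a^2}{1+b/a} = -\frac{b}{2f(a)\,a(a+b)} , \]
so that
\[ F'(a)=f(a)-\frac{b}{f(a)(a+b)} . \]
Therefore $F'(a)\ge0$ if and only if $f(a)^2\ge b/(a+b)$. Writing $x=b/a>0$, this reads $\log(1+x)\ge x/(1+x)$. This is the elementary inequality $\log(1+x)\ge 1-1/(1+x)$, which follows from concavity of $\log$ or from $\log y\ge 1-1/y$ for $y>0$.

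Hence $F$ is nondecreasing on $(0,\infty)$ with $F(0+)=0$, so $F(a)\ge0$ for all $a>0$, which is the stated bound. The only delicate point is the boundary behavior at $0$, namely justifying $F(0+)=0$ and applying the fundamental theorem of calculus on $(\delta,a]$ before letting $\delta\searrow0$. Both follow directly from the logarithmic growth of $f$.
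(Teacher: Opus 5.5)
Your proof is correct, and it takes a different route from the paper's. The paper works with the integral directly. Integrating by parts and substituting $t=\log^{1/2}(1+b/\varepsilon)$, it obtains the exact identity
\[ \int_0^a \log^{1/2}(1+b/\varepsilon)\dd{\varepsilon} = a\log^{1/2}(1+b/a) + \int_{\log^{1/2}(1+b/a)}^\infty \frac{b}{\exp(t^2)-1}\dd{t}, \]
and then bounds the tail term by $a\log^{1/2}(1+b/a)$, using that $x\mapsto\{\exp(x)-1\}/x$ is increasing. You instead differentiate the gap $F(a)=2af(a)-\int_0^a f$ and reduce everything to the pointwise inequality $\log(1+x)\ge x/(1+x)$.

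The two arguments are essentially dual. Writing $1+x=e^{u}$, your inequality becomes $1-e^{-u}\le u$, which is exactly the fact $x-1+\exp(-x)>0$ the paper uses to show that $\{\exp(x)-1\}/x$ is increasing. Both proofs also need the same boundary fact, $a\log^{1/2}(1+b/a)\to0$ as $a\searrow0$.

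Each version buys something different:
\begin{itemize}
\item Yours is shorter, avoids the change of variables, and gives the slightly stronger conclusion that $a\mapsto 2af(a)-\int_0^a f$ is nondecreasing.
\item The paper's exact representation shows where the factor $2$ comes from: a main term $af(a)$ plus a tail of at most the same size. From it one can also read off that the tail is only of order $a/f(a)$ when $b/a\to\infty$, so in that regime the constant $2$ could be sharpened to $1+o(1)$.
\end{itemize}
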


\section{Proofs of Theorems}

\begin{proof}[Proof of Theorem~\ref{thm:linearization}]
Denote $\norm{\theta}_T = \norm{\theta}_{L^\infty([0,T])}$ for any $\theta : \R_{\ge0} \to \R^d$ and $T\in\R_{\ge0}$. 
By Lemma~\ref{lem:decomposition}, 
\[\begin{aligned}
\norm{\hat{\theta}-\theta^\circ-\Delta_n}_T 
&= \sup_{t\in[0,T]}\norm{\hat{\theta}(t)-\theta^\circ(t)-\Delta_n(t)}_{\ell^2} \\
&\le \sup_{t\in[0,T]}\int_0^t \normop{\Pi(t,s)} \left\{\norm{R_n(s)}_{\ell^2}+\norm{D_{\Psi}\big(\hat{\theta}(s),\theta^\circ(s)\big)}_{\ell^2}\right\} \dd{s} ,
\end{aligned}\]
and thus, by assumptions of Theorem~\ref{thm:linearization}, 
\begin{equation}\label{eq:bound}
\norm{\hat{\theta}-\theta^\circ-\Delta_n}_T \le A_*\eta_n \left(n^{-1/2} + \norm{\hat{\theta}-\theta^\circ}_T\right) + \frac{A_*L_n}{2} \norm{\hat{\theta}-\theta^\circ}_T^2 .
\end{equation}
Combining \eqref{eq:bound} and that 
\[\norm{\hat{\theta}-\theta^\circ-\Delta_n}_T \ge \norm{\hat{\theta}-\theta^\circ}_T - \norm{\Delta_n}_T \ge \norm{\hat{\theta}-\theta^\circ}_T - n^{-1/2}U_n ,\]
we obtain 
\[ \left(1-A_*\eta_n-\frac{A_*L_n}{2}\norm{\hat{\theta}-\theta^\circ}_T\right) \norm{\hat{\theta}-\theta^\circ}_T \le n^{-1/2}(U_n+A_*\eta_n) .\]
For any $T\in\R_{\ge0}$, if \[ F(T) = n^{1/2} \norm{\hat{\theta}-\theta^\circ}_T \Big/ (U_n+A_*\eta_n) ,\] then 
\[ \{F(T) \le 4\} \cap \Omega_n \subset \left\{1-A_*\eta_n-\frac{A_*L_n}{2}\norm{\hat{\theta}-\theta^\circ}_T \ge 1/2\right\} \subset \{F(T) \le 2\} ,\]
so \[ \Omega_n \subset \{F(T) \le 2\} \cup \{F(T) > 4\} .\]
Note that $F$ cannot exceed $4$ starting from below $2$ without passing through $(2,4)$, since $F$ is continuous!\footnote{The same argument was presented in Terence Tao's expository article: \url{https://www.tricki.org/article/A_non-trivial_circular_argument_can_often_be_usefully_perturbed_to_a_non-circular_one}}
\begin{center}
\begin{tikzpicture}[scale=1]
    \draw[->] (0,0) -- (5,0) node[right] {$T$};
    \draw[->] (0,0) -- (0,5);
    \draw[dashed] (0,2) -- (5,2);
    \draw[dashed,red] (0,3) -- (5,3);
    \draw[dashed] (0,4) -- (5,4);
    \fill (0,0) circle (2pt);
    \node[below left] at (0,0) {$0$};
    \draw[thick,blue]
        (0,0)
        .. controls (1,1.5) and (1.5,2.5) ..
        (2,3.2)
        .. controls (2.5,3.8) and (3.5,4.6) ..
        (4.5,4.7);
    \node at (1.8,3.5) {$F(T)$};
    \draw[very thick,red] (0,2) -- (0,4);
    \node[left,red] at (0,3) {$3$};
    \node[left] at (0,2) {$2$};
    \node[left] at (0,4) {$4$};
\end{tikzpicture}
\end{center}
Indeed, observing that $F(0) = 0$, it follows from the intermediate value theorem that for any $T > 0$, 
\[ \{F(T) > 4\} \subset \bigcup_{S\in[0,T]} \{F(S)=3\} \subset \Omega_n^\complement .\]
This shows that 
\[ \Omega_n \subset \bigcap_{T\in\R_{\ge0}} \{F(T) \le 2\} = \bigcap_{T\in\R_{\ge0}} \left\{\norm{\hat{\theta}-\theta^\circ}_T \le 2n^{-1/2}(U_n+A_*\eta_n)\right\} .\]
Plugging the above equation into \eqref{eq:bound} leads to {\normalfont(\textbf{i})}. 
To conclude, {\normalfont(\textbf{ii})} holds as a corollary of {\normalfont(\textbf{i})}.
\end{proof}

\begin{proof}[Proof of Theorem~\ref{thm:Gaussian}]
By Theorem~\ref{thm:linearization} and Slutsky's lemma \citep[Example~1.4.7]{vVW2023weak}, the convergence of $n^{1/2}\Delta_n$ suffices. Thus, Lemma~\ref{lem:lead} combined with Proposition~\ref{prop:length_bound} completes the proof. 
\end{proof}

\begin{proof}[Proof of Theorem~\ref{thm:cov}]
We derive bounds in probability for the quantities defined in Lemmas \ref{lem:est1_cov} and \ref{lem:est2_cov}.
\begin{itemize}
\item $B^\circ = \sup_t \norm{\Psi(\theta^\circ(t))}_{\ell^2} < \infty$ by Assumption~\ref{asm:converge}, since $\Psi$ is continuous.
\item $B_n = \sup_t \norm{\norm{\psi_{\theta^\circ(t)}}_{\ell^2}}_{L^2(P_n)} \le \sup_t \norm{\norm{\psi_{\theta^\circ(t)}-\psi_{\theta_*}}_{\ell^2}}_{L^2(P_n)} + \norm{\norm{\psi_{\theta_*}}_{\ell^2}}_{L^2(P_n)}$, where 
\[ \norm{\norm{\psi_{\theta_*}}_{\ell^2}}_{L^2(P_n)} = \mathcal{O}_{\Pr}\left\{\norm{\norm{\psi_{\theta_*}}_{\ell^2}}_{L^2(P)}\right\} = \mathcal{O}_{\Pr}(1) \]
by Assumption~\ref{asm:grad-Lip}, and 
\begin{align*}
\sup_t \norm{\norm{\psi_{\theta^\circ(t)}-\psi_{\theta_*}}_{\ell^2}}_{L^2(P_n)} 
&\le \lVert\dot{\psi}_{C_0}\rVert_{L^2(P_n)} \sup_t\norm{\theta^\circ(t)-\theta_*}_{\ell^2} \\
&= \mathcal{O}_\Pr\left\{\lVert\dot{\psi}_{C_0}\rVert_{L^2(P)} C_0\right\} = \mathcal{O}_\Pr(1) 
\end{align*}
by Assumptions \ref{asm:converge} and \ref{asm:grad-Lip}. 
Thus, $B_n = \mathcal{O}_\Pr(1)$.
\item $\kappa_n = \sup_t \normop{\hat{H}(\hat{\theta}(t))-H(\theta^\circ(t))} \le \sup_\theta \normop{\hat{H}(\theta)-H(\theta)} + \sup_t \normop{H(\hat{\theta}(t))-H(\theta^\circ(t))}$, where 
\[ \sup_\theta \normop{\hat{H}(\theta)-H(\theta)} = \sup_\theta \normop{(P_n-P)\ddot{m}_\theta} = \mathcal{O}_\Pr\{(n^{-1} \log n)^{1/2}\} \]
by \eqref{eq:Hess-rate}, and 
\[ \sup_t \normop{H(\hat{\theta}(t))-H(\theta^\circ(t))} \le L(r) \sup_t \norm{\hat{\theta}(t)-\theta^\circ(t)}_{\ell^2} = \mathcal{O}_\Pr(n^{-1/2}) \]
by Assumption~\ref{asm:Hess-Lip} and Theorem~\ref{thm:linearization}. 
Thus, $\kappa_n = \mathcal{O}_\Pr\{(n^{-1} \log n)^{1/2}\}$.
\item $\sigma_n = \sup_t \norm{\norm{\psi_{\hat{\theta}(t)}-\psi_{\theta^\circ(t)}}_{\ell^2}}_{L^2(P_n)}$ is not greater than 
\[ \norm{\dot{\psi}_r}_{L^2(P_n)} \sup_t\norm{\hat{\theta}(t)-\theta^\circ(t)}_{\ell^2} = \mathcal{O}_\Pr\left\{\norm{\dot{\psi}_r}_{L^2(P)}\right\} \mathcal{O}_\Pr(n^{-1/2}) = \mathcal{O}_\Pr(n^{-1/2}) ,\]
by Assumption~\ref{asm:grad-Lip} and Theorem~\ref{thm:linearization}. 
\item $\zeta_n = \sup_t \norm{(P_n-P)\psi_{\theta^\circ(t)}}_{\ell^2} = \sup_{f\in\mathcal{F}} (P_n-P)f$, with the function class \[\mathcal{F} = \{u^\top\psi_{\theta^\circ(t)} : u\in\mathbb{S}^{d-1},\, t\in\R_{\ge0}\}\] where $\mathbb{S}^{d-1}\subset\R^d$ is the unit Euclidean sphere. 
Define the (pseudo-)metrics $\rho^\circ$ on $\R_{\ge0}$ and $\rho_\oplus$ on $\mathbb{S}^{d-1}\times\R_{\ge0}$ by 
\[ \rho^\circ(s,t) = \norm{\theta^\circ(t)-\theta^\circ(s)}_{\ell^2} ,\quad \rho_\oplus\big((u,s),(v,t)) = C_0\norm{u-v}_{\ell^2} + \rho^\circ(s,t) .\]
By Assumptions \ref{asm:converge} and \ref{asm:grad-Lip}, $F = \norm{\psi_{\theta_*}}_{\ell^2} + C_0 \dot{\psi}_{C_0}$ is an envelope of $\mathcal{F}$, and 
\begin{align*}
\abs{u^\top\psi_{\theta^\circ(t)}-v^\top\psi_{\theta^\circ(s)}} &\le \norm{\psi_{\theta^\circ(t)}}_{\ell^2} \norm{u-v}_{\ell^2} + \dot{\psi}_{C_0} \norm{\theta^\circ(t)-\theta^\circ(s)}_{\ell^2} \\
&\le (F/C_0) \rho_\oplus\big((u,s),(v,t)) .
\end{align*}
Then \citet[Theorem~2.7.17]{vVW2023weak} gives 
\begin{align*}
\sup_{p\ge1} N_{[]}\big(2\varepsilon \norm{F}_{L^p(P)}/C_0 , \mathcal{F}, L^p(P)\big) &\le N(\varepsilon, \mathbb{S}^{d-1}\times\R_{\ge0}, \rho^\oplus) \\
&\le N(\varepsilon/(2C_0), \mathbb{S}^{d-1}, \norm{\cdot}_{\ell^2}) \, N(\varepsilon/2, \R_{\ge0}, \rho^\circ) .
\end{align*}
We have $N(\varepsilon/(2C_0), \mathbb{S}^{d-1}, \norm{\cdot}_{\ell^2}) \le (1+4C_0/\varepsilon)^d$ by \citet[Lemma~5.2]{vershynin2012introduction} and $N(\varepsilon/2, \R_{\ge0}, \rho^\circ) \le \lceil \ell(\theta^\circ)/\varepsilon \rceil$ by Lemma~\ref{lem:cover}, where 
\begin{align*}
\ell(\theta^\circ) = \int_0^\infty \norm{\Psi(\theta^\circ(t))}_{\ell^2} \dd{t} 
&= \int_0^\infty \norm{P\psi_{\theta^\circ(t)}-P\psi_{\theta_*}}_{\ell^2} \dd{t} \\
&\le \int_0^\infty \norm{\norm{\psi_{\theta^\circ(t)}-\psi_{\theta_*}}_{\ell^2}}_{L^2(P)} \dd{t} \\
&\le \int_0^\infty \lVert\dot{\psi}_{C_0}\rVert_{L^2(P)} C_0 \exp(-\mu t) \dd{t} \\
&= C_0 \lVert\dot{\psi}_{C_0}\rVert_{L^2(P)} /\mu < \infty
\end{align*}
by Assumptions \ref{asm:converge} and \ref{asm:grad-Lip}, which is comparable with \citet[Theorem~9]{gupta2021path}.
It follows that 
\[ \sup_{p\ge1} N_{[]}\big(2\varepsilon \norm{F}_{L^p(P)}/C_0 , \mathcal{F}, L^p(P)\big) \le (1+4C_0/\varepsilon)^d \lceil\ell(\theta^\circ)/\varepsilon\rceil .\]
Note that $\norm{F}_{L^2(P)} \le \norm{\norm{\psi_{\theta_*}}_{\ell^2}}_{L^2(P)} + C_0 \lVert\dot{\psi}_{C_0}\rVert_{L^2(P)} < \infty$ by Assumption~\ref{asm:grad-Lip}. 
Using \citet[Theorem~2.14.16]{vVW2023weak} as well as Lemma~\ref{lem:ent_int}, there is a universal constant $C>0$ such that 
\begin{align*}
n^{1/2}\E(\zeta_n) &\le C \norm{F}_{L^2(P)} \int_0^1 \big\{1 + \log N_{[]}\big(\varepsilon\norm{F}_{L^2(P)},\mathcal{F},L^2(P)\big)\big\}^{1/2} \dd{\varepsilon} \\
&\le C \norm{F}_{L^2(P)} \int_0^1 \big\{1 + d \log (1+8/\varepsilon) + \log \lceil2\ell(\theta^\circ)/C_0\varepsilon\rceil\big\}^{1/2} \dd{\varepsilon} < \infty .
\end{align*}
Thus, $\zeta_n = \mathcal{O}_\Pr(n^{-1/2})$.
\item $\gamma_n = \sup_{s,t\in\R_{\ge0}} \normop{(P_n-P)\psi_{\theta^\circ(s)}\psi_{\theta^\circ(t)}^\top} = \sup_{f\in\mathcal{F}_2} (P_n-P)f$ for $\mathcal{F}_2 = \{fg : f,g\in\mathcal{F}\}$ with envelope $F^2$. 
Note that $\norm{F}_{L^4(P)} \le \norm{\norm{\psi_{\theta_*}}_{\ell^2}}_{L^4(P)} + C_0 \lVert\dot{\psi}_{C_0}\rVert_{L^4(P)} < \infty$ by the strengthened Assumption~\ref{asm:grad-Lip}. 
Using \citet[Theorem~2.14.16]{vVW2023weak} as well as Lemmas \ref{lem:product-bracket} and \ref{lem:ent_int}, there is a universal constant $C>0$ such that 
\begin{align*}
n^{1/2}\E(\gamma_n) &\le C \norm{F^2}_{L^2(P)} \int_0^1 \big\{1 + \log N_{[]}\big(\varepsilon\norm{F^2}_{L^2(P)},\mathcal{F}_2,L^2(P)\big)\big\}^{1/2} \dd{\varepsilon} \\
&\le C \norm{F}_{L^4(P)}^2 \int_0^1 \big\{1 + 2\log N_{[]}\big(\varepsilon\norm{F}_{L^4(P)}\big/2,\mathcal{F},L^4(P)\big)\big\}^{1/2} \dd{\varepsilon} \\
&\le C \norm{F}_{L^4(P)}^2 \int_0^1 \big\{1 + 2 d \log (1+16/\varepsilon) + 2 \log \lceil4\ell(\theta^\circ)/C_0\varepsilon\rceil\big\}^{1/2} \dd{\varepsilon} < \infty .
\end{align*}
Thus, $\gamma_n = \mathcal{O}_\Pr(n^{-1/2})$.
\end{itemize}
In summary, $\{A_*\kappa_n<1\}$ has probability tending to one as $n\to\infty$, on which 
\[ c_n = A_*B_n\kappa_n/(1-A_*\kappa_n)^2 + \sigma_n/(1-A_*\kappa_n) = \mathcal{O}_\Pr\{(n^{-1} \log n)^{1/2}\} ,\]
and furthermore, 
\begin{align*}
&\sup_{t_1,t_2\in\R_{\ge0}} \normop{\hat{G}_n(t_1,t_2) - G(t_1,t_2)} \\
&\le \sup_{t_1,t_2\in\R_{\ge0}} \normop{\hat{G}_n(t_1,t_2) - G_n(t_1,t_2)} + \sup_{t_1,t_2\in\R_{\ge0}} \normop{G_n(t_1,t_2) - G(t_1,t_2)} \\
&\le (4 A_*^2 B_n c_n + 2 A_*^2 c_n^2) + A_*^2 (\gamma_n + 2 B^\circ \zeta_n + \zeta_n^2) = \mathcal{O}_\Pr\{(n^{-1} \log n)^{1/2}\} .
\end{align*}
This completes the proof.
\end{proof}

\section{Proofs of Propositions}

\begin{proof}[Proof of Proposition~\ref{prop:vanilla}]
By \eqref{eq:gradflow_emp} and \eqref{eq:gradflow_pop}, 
\[ \dv{t}\{\hat{\theta}(t)-\theta^\circ(t)\} = -\{\Psi_n(\hat{\theta}(t))-\Psi_n(\theta^\circ(t))\} -\{\Psi_n(\theta^\circ(t))-\Psi(\theta^\circ(t))\} .\]
Using the convexity of $M_n - (\lambda/2)\norm{\cdot}_{\ell^2}^2$, 
\begin{align*}
\dv{t}\norm{\hat{\theta}(t)-\theta^\circ(t)}_{\ell^2} 
&= \Big\{2\norm{\hat{\theta}(t)-\theta^\circ(t)}_{\ell^2}\Big\}^{-1} \dv{t} \norm{\hat{\theta}(t)-\theta^\circ(t)}_{\ell^2}^2 \\
&= \norm{\hat{\theta}(t)-\theta^\circ(t)}_{\ell^2}^{-1} \{\hat{\theta}(t)-\theta^\circ(t)\}^\top \dv{t} \{\hat{\theta}(t)-\theta^\circ(t)\} \\
&\le - \lambda \norm{\hat{\theta}(t)-\theta^\circ(t)}_{\ell^2} + \norm{\Psi_n(\theta^\circ(t))-\Psi(\theta^\circ(t))}_{\ell^2} .
\end{align*}
Rearrange, multiply by $\exp(\lambda t)$ and integrate to obtain the desired Duhamel bound.
Then taking the supremum bound completes the proof.
\end{proof}

\begin{proof}[Proof of Proposition~\ref{prop:length_bound}]
Write $\norm{\cdot} = \norm{\norm{\cdot}_{\ell^2}}_{L^2(P)}$ for simplicity. By \eqref{eq:ode-func_int} and \eqref{eq:func_int}, 
\[\begin{aligned}
\pdv{t}\Phi_t &= \{\psi_{\theta^\circ(t)}-\psi_{\theta_*}\} - \int_0^t H(\theta^\circ(t)) \Pi(t,s) \{\psi_{\theta^\circ(s)}-\psi_{\theta_*}\} \dd{s} \\ &\quad + \Big\{I_d - H(\theta^\circ(t)) \int_0^t \Pi(t,s) \dd{s}\Big\} \psi_{\theta_*} ,
\end{aligned}\]
and thus Minkowski's inequality leads to 
\[\begin{aligned}
\norm{\pdv{t}\Phi_t} &\le \norm{\psi_{\theta^\circ(t)}-\psi_{\theta_*}} + \int_0^t \normop{H(\theta^\circ(t))} \normop{\Pi(t,s)} \norm{\psi_{\theta^\circ(s)}-\psi_{\theta_*}} \dd{s} \\ &\quad + \normop[\Big]{I_d - H(\theta^\circ(t)) \int_0^t \Pi(t,s) \dd{s}} \norm{\psi_{\theta_*}} .
\end{aligned}\]
Integrating and using $\int_{t=0}^\infty \int_{s=0}^t \cdot \dd{s}\dd{t} = \int_{s=0}^\infty \int_{t=s}^\infty \cdot \dd{t}\dd{s}$, we have 
\[\begin{aligned}
\int_0^\infty \norm{\pdv{t}\Phi_t} \dd{t} &\le \int_0^\infty \Big\{1 + \int_s^\infty \normop{H(\theta^\circ(t))}\normop{\Pi(t,s)} \dd{t}\Big\} \norm{\psi_{\theta^\circ(s)}-\psi_{\theta_*}} \dd{s} \\ &\quad + \Big\{\int_0^\infty \normop[\Big]{I_d - H(\theta^\circ(t)) \int_0^t \Pi(t,s) \dd{s}} \dd{t}\Big\} \norm{\psi_{\theta_*}} \\
&=: \bar{\ell}_1 + \bar{\ell}_2 .
\end{aligned}\]
Lemma~\ref{lem:trans_mat_norm} implies that 
\[ \int_s^\infty \normop{H(\theta^\circ(t))}\normop{\Pi(t,s)} \dd{t} \le \Lambda \int_0^\infty A_*\lambda_* \exp(-\lambda_* r) \dd{r} = \Lambda A_* .\]
Then by Assumptions \ref{asm:converge} and \ref{asm:grad-Lip}, 
\[ \bar{\ell}_1 \le \int_0^\infty (1 + \Lambda A_*) \norm{\dot{\psi}_{C_0}}_{L^2(P)} C_0 \exp(-\mu s) \dd{s} 
= (1 + \Lambda A_*) \norm{\dot{\psi}_{C_0}}_{L^2(P)} C_0 /\mu .\]
It remains to show that 
\[ \bar{\ell}_2 / B_* = \int_0^\infty \normop[\Big]{I_d - H(\theta^\circ(t)) \int_0^t \Pi(t,s) \dd{s}} \dd{t} \le (1 + \Lambda A_*) L(C_0) C_0 /(\lambda_* \mu) + 1 /\lambda_* .\]
Introduce $\Pi_*(t,s) = \exp\{-(t-s)H(\theta_*)\}$ for $s,t\in\R_{\ge0}$. 
Direct calculation gives 
\begin{align*}
& I_d - H(\theta^\circ(t)) \int_0^t \Pi(t,s) \dd{s} + H(\theta^\circ(t)) \int_0^t \{\Pi(t,s)-\Pi_*(t,s)\} \dd{s} \\
&= I_d - H(\theta^\circ(t)) \int_0^t \Pi_*(t,s) \dd{s} \\
&= I_d - H(\theta_*) \int_0^t \Pi_*(t,s) \dd{s} - \{H(\theta^\circ(t))-H(\theta_*)\} \int_0^t \Pi_*(t,s) \dd{s} \\
&= \Pi_*(t,0) - \{H(\theta^\circ(t))-H(\theta_*)\} \int_0^t \Pi_*(t,s) \dd{s} ,
\end{align*}
and 
\begin{align*}
\int_0^t \{\Pi(t,s)-\Pi_*(t,s)\} \dd{s} 
&= \int_0^t \Pi_*(t,s) \{\Pi_*(s,t)\Pi(t,s) - I_d\} \dd{s} \\
&= \int_{s=0}^t \Pi_*(t,s) \int_{\tau=s}^t \pdv{\tau}\{\Pi_*(s,\tau)\Pi(\tau,s)\} \dd{\tau} \dd{s} \\
&= \int_{s=0}^t \Pi_*(t,s) \int_{\tau=s}^t \Pi_*(s,\tau) \{H(\theta_*)-H(\theta^\circ(\tau))\} \Pi(\tau,s) \dd{\tau} \dd{s} \\
&= \int_{\tau=0}^t \Pi_*(t,\tau) \{H(\theta_*)-H(\theta^\circ(\tau))\} \int_{s=0}^\tau \Pi(\tau,s) \dd{s} \dd{\tau} ,
\end{align*}
so 
\begin{align*}
&\normop[\Big]{I_d - H(\theta^\circ(t)) \int_0^t \Pi(t,s) \dd{s}} \\
&\le \normop{\Pi_*(t,0)} + \normop{H(\theta^\circ(t))-H(\theta_*)} \int_0^t \normop{\Pi_*(t,s)} \dd{s}
\\ &\quad + \normop{H(\theta^\circ(t))} \int_0^t \normop{\Pi_*(t,\tau)} \normop{H(\theta^\circ(\tau))-H(\theta_*)} \Big\{\int_0^\tau \normop{\Pi(\tau,s)} \dd{s}\Big\} \dd{\tau} .
\end{align*}
By Assumptions \ref{asm:converge} and \ref{asm:Hess-Lip}, 
\[ \normop{H(\theta^\circ(t))-H(\theta_*)} \le L(C_0) C_0\exp(-\mu t) ,\quad \forall t \ge 0 .\]
Since the smallest eigenvalue of $H(\theta_*)$ is larger than or equal to $\lambda_*$ by Assumption~\ref{asm:coercive}, the spectral mapping theorem leads to 
\[ \normop{\Pi_*(t,s)} \le \exp\{-\lambda_*(t-s)\} ,\quad \forall t \ge s \ge 0 .\]
In conjunction with Lemma~\ref{lem:trans_mat_norm}, 
\begin{align*}
&\int_0^\infty \normop[\Big]{I_d - H(\theta^\circ(t)) \int_0^t \Pi(t,s) \dd{s}} \dd{t} \\
&\le \int_0^\infty \normop{\Pi_*(t,0)} \dd{t} + \int_0^\infty \normop{H(\theta^\circ(t))-H(\theta_*)} \Big\{\int_0^t \normop{\Pi_*(t,s)} \dd{s}\Big\} \dd{t}
\\ &\quad + \int_{t=0}^\infty \normop{H(\theta^\circ(t))} \int_{\tau=0}^t \normop{\Pi_*(t,\tau)} \normop{H(\theta^\circ(\tau))-H(\theta_*)} \Big\{\int_0^\tau \normop{\Pi(\tau,s)} \dd{s}\Big\} \dd{\tau} \dd{t} \\
&\le \int_0^\infty \exp(-\lambda_*t) \dd{t} + \int_0^\infty L(C_0) C_0\exp(-\mu t) \Big\{\int_0^t \exp\{-\lambda_*(t-s)\} \dd{s}\Big\} \dd{t}
\\ &\quad + \int_{t=0}^\infty \Lambda \int_{\tau=0}^t \exp\{-\lambda_*(t-\tau)\} L(C_0) C_0\exp(-\mu\tau) A_*\Big[\int_0^\tau \lambda_*\exp\{-\lambda_*(\tau-s)\} \dd{s}\Big] \dd{\tau} \dd{t} \\
&\le 1/\lambda_* + L(C_0)C_0/(\lambda_*\mu) + \Lambda A_* L(C_0)C_0 \int_{\tau=0}^\infty \Big[\int_{t=\tau}^\infty \exp\{-\lambda_*(t-\tau)\} \dd{t}\Big] \exp(-\mu\tau) \dd{\tau} \\
&= 1/\lambda_* + L(C_0)C_0/(\lambda_*\mu) + \Lambda A_* L(C_0)C_0/(\lambda_*\mu) .
\end{align*}
This completes the proof.
\end{proof}

\section{Proofs of Lemmas}

\begin{proof}[Proof of Lemma~\ref{lem:decomposition}]
By definition, 
\begin{align*}
-\Big\{\dv{t}+H(\theta^\circ(t))\Big\}\{\hat{\theta}(t)-\theta^\circ(t)\}
&= -\dv{t}\hat{\theta}(t)+\dv{t}\theta^\circ(t)-H(\theta^\circ(t))\{\hat{\theta}(t)-\theta^\circ(t)\} \\
&= \Psi_n(\hat{\theta}(t)) - \Psi(\theta^\circ(t)) - H(\theta^\circ(t))\{\hat{\theta}(t)-\theta^\circ(t)\} \\
&= (\Psi_n-\Psi)(\theta^\circ(t)) + \{(\Psi_n-\Psi)(\hat{\theta}(t))-(\Psi_n-\Psi)(\theta^\circ(t))\} \\
&\quad+\Psi(\hat{\theta}(t)) - \Psi(\theta^\circ(t)) - H(\theta^\circ(t))\{\hat{\theta}(t)-\theta^\circ(t)\} \\
&= (\Psi_n-\Psi)(\theta^\circ(t)) + R_n(t) + D_{\Psi}(\hat{\theta}(t),\theta^\circ(t)) ,
\end{align*}
so variation of constants \citet[Theorem~3.12]{teschl2012ordinary} leads to the desired result.
\end{proof}

\begin{proof}[Proof of Lemma~\ref{lem:lead}]
The covariance function is clear, since \[ \Cov\{n^{1/2}\Delta_n(t_1),n^{1/2}\Delta_n(t_2)\} = G(t_1,t_2) ,\quad t_1,t_2 \in \R_{\ge0} .\]
By the Cram\'er--Wold device \citep[Example~1.3.5]{vVW2023weak}, it suffices to show the convergence of $n^{1/2}v^\top\Delta_n$ for $v\in\R^d$. 
Note that $v^\top\Delta_n(t) = (P_n-P) (-v^\top\Phi_t)$, and that 
\[ \int_0^\infty \norm{v^\top\pdv{t}\Phi_t}_{L^2(P)} \dd{t} \le \norm{v}_{\ell^2} \int_0^\infty \norm{\norm{\pdv{t}\Phi_t}_{\ell^2}}_{L^2(P)} \dd{t} \]
by the Cauchy--Schwarz inequality.
Applying Lemma~\ref{lem:uclt} completes the proof.
\end{proof}

\begin{proof}[Proof of Lemma~\ref{lem:trans_mat_norm}]
Denoting $a^+ = \max(a,0)$, by \citet[Equation~(6.6.43)]{soderlind2024logarithmic}, 
\begin{align*}
& \normop{\Pi(t,s)} \le \exp\Big\{- \int_s^t \lambda^\circ(u) \dd{u}\Big\} \\
&\le \exp\{-\lambda_0(t-s)\} \1{\{t\le t_*\}} + \exp\{-\lambda_*(t-s)+(\lambda_*-\lambda_0)(t_*-s)^+\} \1{\{t>t_*\}} \\
&= \exp\{-\lambda_*(t-s)+(\lambda_*-\lambda_0)(\min(t,t_*)-s)^+\} \\
&\le \exp\{-\lambda_*(t-s)+(\lambda_*-\lambda_0)t_*\} .
\end{align*}
This completes the proof.
\end{proof}

\begin{proof}[Proof of Lemma~\ref{lem:ode}]
Write $\tilde{\Phi}_t = \int_{s=0}^{t} \hat{\phi}_{t,s} \dd{s}$. Then by definition, $\tilde{\Phi}_0=0_d$ and 
\[ \pdv{t}\tilde{\Phi}_t = \hat{\phi}_{t,t} + \int_{s=0}^{t} \pdv{t}\hat{\phi}_{t,s} \dd{s} = \psi_{\hat{\theta}(t)} - \hat{H}(\hat{\theta}(t)) \tilde{\Phi}_t .\]
By the uniqueness of a solution to \eqref{eq:ode-empirical}, we obtain $\hat{\Phi}_t = \tilde{\Phi}_t$ as desired. 
\end{proof}

\begin{proof}[Proof of Lemma~\ref{lem:est1_cov}]
The transition matrix in \eqref{eq:transition_matrix} satisfies that $\Pi(s,t) = \Pi(t,s)^{-1}$, so 
\[ \pdv{t}\Pi(s,t) = \Pi(s,t)\Big\{\pdv{t}\Pi(t,s)\Big\}\Pi(s,t) = \Pi(s,t)H(\theta^\circ(t))\Pi(t,s)\Pi(s,t) = \Pi(s,t)H(\theta^\circ(t)) .\]
For any $s\le t$, denoting $\phi_{t,s} = \Pi(t,s) \psi_{\theta^\circ(s)}$, we have 
\begin{align*}
\hat{\phi}_{t,s}-\phi_{t,s} 
&= \Pi(t,s)\{\Pi(s,t)\hat{\phi}_{t,s}-\psi_{\hat{\theta}(s)}\}  + \Pi(t,s) \{\psi_{\hat{\theta}(s)}-\psi_{\theta^\circ(s)}\} \\
&= \Pi(t,s)\int_s^t \pdv{\tau}\{\Pi(s,\tau)\hat{\phi}_{\tau,s}\} \dd{\tau} + \Pi(t,s) \{\psi_{\hat{\theta}(s)}-\psi_{\theta^\circ(s)}\} \\
&= \Pi(t,s)\int_s^t \Pi(s,\tau)\{H(\theta^\circ(\tau))-\hat{H}(\hat{\theta}(\tau))\}\hat{\phi}_{\tau,s} \dd{\tau} + \Pi(t,s) \{\psi_{\hat{\theta}(s)}-\psi_{\theta^\circ(s)}\} \\
&= \int_s^t \Pi(t,\tau)\{H(\theta^\circ(\tau))-\hat{H}(\hat{\theta}(\tau))\}(\hat{\phi}_{\tau,s}-\phi_{\tau,s}) \dd{\tau} \\ &\quad + \int_s^t \Pi(t,\tau)\{H(\theta^\circ(\tau))-\hat{H}(\hat{\theta}(\tau))\}\Pi(\tau,s)\psi_{\theta^\circ(s)} \dd{\tau} + \Pi(t,s) \{\psi_{\hat{\theta}(s)}-\psi_{\theta^\circ(s)}\} ,
\end{align*}
and thus 
\begin{align*}
\norm{\hat{\phi}_{t,s}-\phi_{t,s}}_{\ell^2} 
&\le \int_s^t \normop{\Pi(t,\tau)} \kappa_n \norm{\hat{\phi}_{\tau,s}-\phi_{\tau,s}}_{\ell^2} \dd{\tau} \\ &\quad + \int_s^t \normop{\Pi(t,\tau)}\kappa_n\normop{\Pi(\tau,s)}\norm{\psi_{\theta^\circ(s)}}_{\ell^2} \dd{\tau} + \normop{\Pi(t,s)} \norm{\psi_{\hat{\theta}(s)}-\psi_{\theta^\circ(s)}}_{\ell^2} .
\end{align*}
By Lemma~\ref{lem:trans_mat_norm}, it follows that $u(t) = \exp(\lambda_*t)\norm{\hat{\phi}_{t,s}-\phi_{t,s}}_{\ell^2}$ satisfies 
\begin{align*}
u(t) 
&\le \int_s^t A_*\lambda_* \kappa_n \exp(\lambda_*\tau)\norm{\hat{\phi}_{\tau,s}-\phi_{\tau,s}}_{\ell^2} \dd{\tau} \\ &\quad + \int_s^t (A_*\lambda_*)^2\kappa_n\exp(\lambda_*s)\norm{\psi_{\theta^\circ(s)}}_{\ell^2} \dd{\tau} + A_*\lambda_* \exp(\lambda_*s) \norm{\psi_{\hat{\theta}(s)}-\psi_{\theta^\circ(s)}}_{\ell^2} \\
&= \int_s^t A_*\kappa_n\lambda_* u(\tau) \dd{\tau} + A_*\lambda_* \exp(\lambda_*s) \left\{A_*\norm{\psi_{\theta^\circ(s)}}_{\ell^2}\kappa_n\lambda_*(t-s) + \norm{\psi_{\hat{\theta}(s)}-\psi_{\theta^\circ(s)}}_{\ell^2}\right\} .
\end{align*}
Then Gr\"{o}nwall's inequality \citep[Lemma~2.7]{teschl2012ordinary} implies that 
\begin{align*}
& \exp(\lambda_*t)\norm{\hat{\phi}_{t,s}-\phi_{t,s}}_{\ell^2} = u(t) \\
&\le A_*\lambda_* \exp(\lambda_*s) \left\{A_*\norm{\psi_{\theta^\circ(s)}}_{\ell^2}\kappa_n\lambda_*(t-s) + \norm{\psi_{\hat{\theta}(s)}-\psi_{\theta^\circ(s)}}_{\ell^2}\right\} \exp\{A_*\kappa_n\lambda_*(t-s)\} .
\end{align*}
Rearranging the above equation and applying Minkowski's inequality, we get 
\begin{equation}\label{eq:est_phi}
\norm{\norm{\hat{\phi}_{t,s}-\phi_{t,s}}_{\ell^2}}_{L^2(P_n)} \le A_*\lambda_* \left\{A_*B_n\kappa_n\lambda_*(t-s) + \sigma_n\right\} \exp\{-(1-A_*\kappa_n)\lambda_*(t-s)\} .
\end{equation}
Also note that for any $s\le t$, by Lemma~\ref{lem:trans_mat_norm}, 
\begin{equation}\label{eq:norm_phi}
\norm{\norm{\phi_{t,s}}_{\ell^2}}_{L^2(P_n)} \le \normop{\Pi(t,s)} \norm{\norm{\psi_{\theta^\circ(s)}}_{\ell^2}}_{L^2(P_n)} \le A_* B_n \lambda_*\exp\{-\lambda_*(t-s)\} .
\end{equation}
By Weyl's inequality and the Cauchy--Schwarz inequality, 
\begin{align*}
& \normop{P_n\hat{\phi}_{t_1,s_1}\hat{\phi}_{t_2,s_2}^\top - P_n\phi_{t_1,s_1}\phi_{t_2,s_2}^\top} \le P_n\normop{\hat{\phi}_{t_1,s_1}\hat{\phi}_{t_2,s_2}^\top - \phi_{t_1,s_1}\phi_{t_2,s_2}^\top} \\
&\le P_n\normop{\phi_{t_1,s_1}(\hat{\phi}_{t_2,s_2}-\phi_{t_2,s_2})^\top} + P_n\normop{(\hat{\phi}_{t_1,s_1}- \phi_{t_1,s_1})\phi_{t_2,s_2}^\top} \\ &\quad + P_n\normop{(\hat{\phi}_{t_1,s_1}-\phi_{t_1,s_1})(\hat{\phi}_{t_2,s_2}-\phi_{t_2,s_2})^\top} \\
&\le P_n\norm{\phi_{t_1,s_1}}_{\ell^2}\norm{\hat{\phi}_{t_2,s_2}-\phi_{t_2,s_2}}_{\ell^2} + P_n\norm{\hat{\phi}_{t_1,s_1}-\phi_{t_1,s_1}}_{\ell^2}\norm{\phi_{t_2,s_2}}_{\ell^2} \\ &\quad + P_n\norm{\hat{\phi}_{t_1,s_1}-\phi_{t_1,s_1}}_{\ell^2}\norm{\hat{\phi}_{t_2,s_2}-\phi_{t_2,s_2}}_{\ell^2} \\
&\le \norm{\norm{\phi_{t_1,s_1}}_{\ell^2}}_{L^2(P_n)} \norm{\norm{\hat{\phi}_{t_2,s_2}-\phi_{t_2,s_2}}_{\ell^2}}_{L^2(P_n)} \\ &\quad + \norm{\norm{\hat{\phi}_{t_1,s_1}-\phi_{t_1,s_1}}_{\ell^2}}_{L^2(P_n)} \norm{\norm{\phi_{t_2,s_2}}_{\ell^2}}_{L^2(P_n)} \\ &\quad + \norm{\norm{\hat{\phi}_{t_1,s_1}-\phi_{t_1,s_1}}_{\ell^2}}_{L^2(P_n)} \norm{\norm{\hat{\phi}_{t_2,s_2}-\phi_{t_2,s_2}}_{\ell^2}}_{L^2(P_n)} ,
\end{align*}
and similarly, in conjunction with Jensen's inequality, 
\begin{align*}
& \normop{P_n\hat{\phi}_{t_1,s_1}P_n\hat{\phi}_{t_2,s_2}^\top - P_n\phi_{t_1,s_1}P_n\phi_{t_2,s_2}^\top} \\
&\le \normop{P_n(\hat{\phi}_{t_1,s_1}-\phi_{t_1,s_1})P_n(\hat{\phi}_{t_2,s_2}-\phi_{t_2,s_2})^\top} \\ &\quad + \normop{P_n\phi_{t_1,s_1}P_n(\hat{\phi}_{t_2,s_2}-\phi_{t_2,s_2})^\top} + \normop{P_n(\hat{\phi}_{t_1,s_1}- \phi_{t_1,s_1})P_n\phi_{t_2,s_2}^\top} \\
&\le \norm{P_n(\hat{\phi}_{t_1,s_1}-\phi_{t_1,s_1})}_{\ell^2}\norm{P_n(\hat{\phi}_{t_2,s_2}-\phi_{t_2,s_2})}_{\ell^2} \\ &\quad + \norm{P_n\phi_{t_1,s_1}}_{\ell^2} \norm{P_n(\hat{\phi}_{t_2,s_2}-\phi_{t_2,s_2})}_{\ell^2} + \norm{P_n(\hat{\phi}_{t_1,s_1}-\phi_{t_1,s_1})}_{\ell^2} \norm{P_n\phi_{t_2,s_2}}_{\ell^2} \\
&\le \norm{\norm{\hat{\phi}_{t_1,s_1}-\phi_{t_1,s_1}}_{\ell^2}}_{L^2(P_n)} \norm{\norm{\hat{\phi}_{t_2,s_2}-\phi_{t_2,s_2}}_{\ell^2}}_{L^2(P_n)} \\ &\quad + \norm{\norm{\phi_{t_1,s_1}}_{\ell^2}}_{L^2(P_n)} \norm{\norm{\hat{\phi}_{t_2,s_2}-\phi_{t_2,s_2}}_{\ell^2}}_{L^2(P_n)} \\ &\quad + \norm{\norm{\hat{\phi}_{t_1,s_1}-\phi_{t_1,s_1}}_{\ell^2}}_{L^2(P_n)} \norm{\norm{\phi_{t_2,s_2}}_{\ell^2}}_{L^2(P_n)} .
\end{align*}
Plugging \eqref{eq:est_phi} and \eqref{eq:norm_phi} into the above inequalities, we can conclude that for any $s_1\le t_1$ and $s_2\le t_2$, 
\begin{align*}
& \max(\normop{P_n\hat{\phi}_{t_1,s_1}\hat{\phi}_{t_2,s_2}^\top - P_n\phi_{t_1,s_1}\phi_{t_2,s_2}^\top} , \normop{P_n\hat{\phi}_{t_1,s_1}P_n\hat{\phi}_{t_2,s_2}^\top - P_n\phi_{t_1,s_1}P_n\phi_{t_2,s_2}^\top}) \\
&\le (A_*\lambda_*)^2 B_n \sum_{(j,j')=(1,2),(2,1)} \{A_*B_n\kappa_n\lambda_*(t_j-s_j) + \sigma_n\} \\ &\hspace{12em}\times \exp\{-(1-A_*\kappa_n)\lambda_*(t_j-s_j)-\lambda_*(t_{j'}-s_{j'})\} \\ &\quad + (A_*\lambda_*)^2 \prod_{j=1,2} \{A_*B_n\kappa_n\lambda_*(t_j-s_j) + \sigma_n\} \exp\{-(1-A_*\kappa_n)\lambda_*(t_j-s_j)\} .
\end{align*}
Therefore, on the event $\{A_*\kappa_n<1\}$, for any $t_1,t_2\in\R_{\ge0}$, by \eqref{eq:func_int} and Lemma~\ref{lem:ode}, 
\begin{align*}
&\quad \normop{\hat{G}_n(t_1,t_2) - G_n(t_1,t_2)} \\
&= \bigg\lVert\int_{s_1=0}^{t_1}\int_{s_2=0}^{t_2} \Big\{\big(P_n\hat{\phi}_{t_1,s_1}\hat{\phi}_{t_2,s_2}^\top - P_n\phi_{t_1,s_1}P_n\phi_{t_2,s_2}^\top\big) \\ &\hspace{8em} - \big(P_n\phi_{t_1,s_1}\phi_{t_2,s_2}^\top - P_n\hat{\phi}_{t_1,s_1}P_n\hat{\phi}_{t_2,s_2}^\top\big)\Big\} \dd{s_1}\dd{s_2}\bigg\rVert_{\mathrm{op}} \\
&\le \int_{s_1=0}^{t_1}\int_{s_2=0}^{t_2} \Big(\normop{P_n\hat{\phi}_{t_1,s_1}\hat{\phi}_{t_2,s_2}^\top - P_n\phi_{t_1,s_1}\phi_{t_2,s_2}^\top} \\ &\hspace{7em} + \normop{P_n\phi_{t_1,s_1}P_n\phi_{t_2,s_2}^\top - P_n\hat{\phi}_{t_1,s_1}P_n\hat{\phi}_{t_2,s_2}^\top}\Big) \dd{s_1}\dd{s_2} \\
&\le 2 (A_*\lambda_*)^2 B_n \sum_{(j,j')=(1,2),(2,1)} \int_{s_{j'}=0}^{t_{j'}} \exp\{-\lambda_*(t_{j'}-s_{j'})\} \dd{s_{j'}} \\ &\hspace{9em}\times \int_{s_j=0}^{t_j} \{A_*B_n\kappa_n\lambda_*(t_j-s_j) + \sigma_n\} \exp\{-(1-A_*\kappa_n)\lambda_*(t_j-s_j)\} \dd{s_j} \\ &\quad + 2 (A_*\lambda_*)^2 \prod_{j=1,2} \int_{s_j=0}^{t_j} \{A_*B_n\kappa_n\lambda_*(t_j-s_j) + \sigma_n\} \exp\{-(1-A_*\kappa_n)\lambda_*(t_j-s_j)\} \dd{s_j} \\
&\le 4 A_*^2 B_n \{A_*B_n\kappa_n/(1-A_*\kappa_n)^2 + \sigma_n/(1-A_*\kappa_n)\} \\ &\quad + 2 A_*^2 \{A_*B_n\kappa_n/(1-A_*\kappa_n)^2 + \sigma_n/(1-A_*\kappa_n)\}^2 .
\end{align*}
The proof is now completed.
\end{proof}

\begin{proof}[Proof of Lemma~\ref{lem:est2_cov}]
Let $\phi_{t,s} = \Pi(t,s) \psi_{\theta^\circ(s)}$ for $s\le t$. We have 
\begin{align*}
& \normop{\Cov_{P_n}(\phi_{t_1,s_1},\phi_{t_2,s_2}) - \Cov_P(\phi_{t_1,s_1},\phi_{t_2,s_2})} \\
&= \normop{\Pi(t_1,s_1) [\Cov_{P_n}\{\psi_{\theta^\circ(s_1)},\psi_{\theta^\circ(s_2)}\} - \Cov_P\{\psi_{\theta^\circ(s_1)},\psi_{\theta^\circ(s_2)}\}] \Pi(t_2,s_2)^\top} \\
&\le \normop{\Pi(t_1,s_1)}\normop{\Pi(t_2,s_2)} \normop{\Cov_{P_n}\{\psi_{\theta^\circ(s_1)},\psi_{\theta^\circ(s_2)}\} - \Cov_P\{\psi_{\theta^\circ(s_1)},\psi_{\theta^\circ(s_2)}\}} .
\end{align*}
and
\begin{align*}
& \normop{\Cov_{P_n}\{\psi_{\theta^\circ(s_1)},\psi_{\theta^\circ(s_2)}\} - \Cov_P\{\psi_{\theta^\circ(s_1)},\psi_{\theta^\circ(s_2)}\}} \\
&\le \normop{P_n\psi_{\theta^\circ(s_1)}\psi_{\theta^\circ(s_2)}^\top - P\psi_{\theta^\circ(s_1)}\psi_{\theta^\circ(s_2)}^\top} + \normop{P_n\psi_{\theta^\circ(s_1)}P_n\psi_{\theta^\circ(s_2)}^\top - P\psi_{\theta^\circ(s_1)}P\psi_{\theta^\circ(s_2)}^\top} \\
&\le \gamma_n + \normop{(P_n-P)\psi_{\theta^\circ(s_1)}(P_n-P)\psi_{\theta^\circ(s_2)}^\top} \\ &\quad + \normop{P\psi_{\theta^\circ(s_1)}(P_n-P)\psi_{\theta^\circ(s_2)}^\top} + \normop{(P_n-P)\psi_{\theta^\circ(s_1)}P\psi_{\theta^\circ(s_2)}^\top} \\
&\le \gamma_n + \norm{(P_n-P)\psi_{\theta^\circ(s_1)}}_{\ell^2}\norm{(P_n-P)\psi_{\theta^\circ(s_2)}}_{\ell^2} \\ &\quad + \norm{P\psi_{\theta^\circ(s_1)}}_{\ell^2} \norm{(P_n-P)\psi_{\theta^\circ(s_2)}}_{\ell^2} + \norm{(P_n-P)\psi_{\theta^\circ(s_1)}}_{\ell^2} \norm{P\psi_{\theta^\circ(s_2)}}_{\ell^2} \\
&\le \gamma_n + \zeta_n^2 + 2 B^\circ \zeta_n .
\end{align*}
Thus, for any $t_1,t_2\in\R_{\ge0}$, by \eqref{eq:func_int} and \eqref{asm:transition_matrix}, 
\begin{align*}
&\normop{G_n(t_1,t_2) - G(t_1,t_2)} \\
&= \left\lVert\int_{s_1=0}^{t_1}\int_{s_2=0}^{t_2} \left\{\Cov_{P_n}(\phi_{t_1,s_1},\phi_{t_2,s_2}) - \Cov_P(\phi_{t_1,s_1},\phi_{t_2,s_2})\right\} \dd{s_1}\dd{s_2}\right\rVert_{\mathrm{op}} \\
&\le \int_{s_1=0}^{t_1}\int_{s_2=0}^{t_2} \normop{\Cov_{P_n}(\phi_{t_1,s_1},\phi_{t_2,s_2}) - \Cov_P(\phi_{t_1,s_1},\phi_{t_2,s_2})} \dd{s_1}\dd{s_2} \\
&\le \int_{s_1=0}^{t_1}\int_{s_2=0}^{t_2} \normop{\Pi(t_1,s_1)}\normop{\Pi(t_2,s_2)} (\gamma_n + \zeta_n^2 + 2 B^\circ \zeta_n) \dd{s_1}\dd{s_2} \\
&\le A_*^2 (\gamma_n + \zeta_n^2 + 2 B^\circ \zeta_n) .
\end{align*}
This completes the proof.
\end{proof}

\begin{proof}[Proof of Lemma~\ref{lem:uclt}]
We apply Donsker's theorem \citep[Theorem~2.5.6]{vVW2023weak}, for which we need to show the finiteness of the bracketing integral corresponding to $\mathcal{F} = \{\varphi_t\}_{t\in \mathcal{T}}$ in $L^2(P)$.
If $\ell = \int_\mathcal{T} \norm{\pdv{t}\varphi_t}_{L^2(P)} \dd{t}$, then 
\[\sup_{t\in \mathcal{T}} \norm{\varphi_t}_{L^2(P)} \le \norm{\varphi_{t_0}}_{L^2(P)} + \ell < \infty\] by Minkowski's inequality, implying $\mathcal{F} \subset L^2(P)$. 
Furthermore, by Lemmas \ref{lem:bracket} and \ref{lem:ent_int}, 
\[\begin{aligned}
\int_0^\infty \log^{1/2} N_{[]}(\varepsilon,\mathcal{F},L^2(P)) \dd{\varepsilon}
&\le \int_0^\infty \log^{1/2} \lceil 2 \ell / \varepsilon \rceil \dd{\varepsilon} \\
&\le \int_0^{2\ell} \log^{1/2}(1 + 2\ell / \varepsilon) \dd{\varepsilon} \\
&\le (4 \log^{1/2}2) \ell < \infty .
\end{aligned}\]
This completes the proof.
\end{proof}

\begin{proof}[Proof of Lemma~\ref{lem:length-ode}]
Define $V_\varphi(t) = V(\varphi(t))$ and $a_\varphi = a(\sup_t\norm{\varphi(t)})$. Then 
\begin{align*}
\dv{V_\varphi(t)}{t} &= \inprod{\nabla V(\varphi(t))}{F(\varphi(t),t)} \\
&= \inprod{\nabla V(\varphi(t))}{F_*(\varphi(t))} + \inprod{\nabla V(\varphi(t))}{F(\varphi(t),t)-F_*(\varphi(t))} \\
&\le -\lambda\norm{\varphi(t)-u_*}^2 + L\norm{\varphi(t)-u_*} \cdot a_\varphi r(t) \\
&\le -\lambda C_2 V_\varphi(t) + a_\varphi L C_1^{1/2} V_\varphi^{1/2}(t) r(t) .
\end{align*}
It follows that 
\[ \dv{V_\varphi^{1/2}(t)}{t} = \frac{1}{2V_\varphi^{1/2}}\dv{V_\varphi(t)}{t}
\le -\frac{\lambda C_2}{2} V_\varphi^{1/2}(t) + \frac{a_\varphi L C_1^{1/2}}{2} r(t) .\]
Rearrange, multiply by $\exp\{(\lambda C_2 /2)(t-t_0)\}$ and integrate to obtain the Duhamel bound 
\[ V_\varphi^{1/2}(t) \le \exp\Big\{-\frac{\lambda C_2}{2}(t-t_0)\Big\} V_\varphi^{1/2}(t_0) + \frac{a_\varphi L C_1^{1/2}}{2} \int_{t_0}^t \exp\Big\{-\frac{\lambda C_2}{2}(t-s)\Big\} r(s) \dd{s} .\]
Noticing that $\int_{t=t_0}^\infty \int_{s=t_0}^t \cdot \dd{s}\dd{t} = \int_{s=t_0}^\infty \int_{t=s}^\infty \cdot \dd{t}\dd{s}$, we have 
\[ \int_{t_0}^\infty V_\varphi^{1/2}(t) \dd{t} \le \frac{2}{\lambda C_2} V_\varphi^{1/2}(t_0) + \frac{a_\varphi L C_1^{1/2}}{\lambda C_2} \int_{t_0}^\infty r(s) \dd{s} .\]
Therefore, 
\[ \int_{t_0}^\infty \norm{\varphi(t)-u_*} \dd{t} \le C_1^{1/2} \int_{t_0}^\infty V_\varphi^{1/2}(t) \dd{t} \le \frac{2C_1^{1/2}}{\lambda C_2^{3/2}} \norm{\varphi(t_0)-u_*} + \frac{a_\varphi L C_1}{\lambda C_2} \int_{t_0}^\infty r(t) \dd{t} .\]
This leads to the desired result, since 
\[ \norm{\dv{\varphi(t)}{t}} \le \norm{F(\varphi(t),t)-F_*(\varphi(t))} + \norm{F_*(\varphi(t))} \le a_\varphi r(t) + \Lambda \norm{\varphi(t)-u_*} .\]
The proof is completed.
\end{proof}

\begin{proof}[Proof of Lemma~\ref{lem:bracket}]
Define the (pointwise) total variation $V_{st} = \int_s^t \abs{\dv*{\varphi(u)}{u}} \dd{u}$ for $s\le t$, which satisfies \[\sup_{u\in[s,t]}\abs{\varphi(u)-\varphi(s)} \le V_{st}\] and by Henri Cartan's vectorial mean value theorem, \[\norm{V_{st}} \le \ell(\varphi|_{[s,t]}) = \int_s^t \norm{\dv*{\varphi(u)}{u}} \dd{u}.\]
If $t_0 = \inf \mathcal{T}$, $t_N = \sup \mathcal{T}$, and $t_j \in \mathcal{T}$, $j=1,\dots,N-1$, are chosen such that $\ell(\varphi|_{[t_{j-1},t_j]}) \le \varepsilon/2$ for $j=1,\dots,N$, then any $t \in [t_{j-1},t_j]$ satisfies that \[\varphi(t_{j-1})-V_{t_{j-1}t_j} \le \varphi(t) \le \varphi(t_{j-1})+V_{t_{j-1}t_j},\] i.e., the brackets $\varphi(t_{j-1}) \pm V_{t_{j-1}t_j}$ cover $\mathcal{F}$, and their sizes are bounded by $\norm{2V_{t_{j-1}t_j}} \le \varepsilon$. 
The smallest $N$ is exactly $\lceil 2\ell(\varphi)/\varepsilon \rceil$, completing the proof.
\end{proof}

\begin{proof}[Proof of Lemma~\ref{lem:product-bracket}]
Suppose that $\mathcal{F}$ (resp.\ $\mathcal{G}$) admits a cover of brackets $[l_j,u_j]$, $j=1,\dots,N$, s.t.\ $\norm{u_j-l_j}_{L^p(P)} \le \varepsilon/\norm{G}_{L^q(P)}$ (resp.\ $[l_j',u_j']$, $j=1,\dots,N'$, s.t.\ $\lVert u_j'-l_j'\rVert_{L^q(P)} \le \varepsilon/\norm{F}_{L^p(P)}$). 
Define (pointwise) \[L_{jk} = \min(l_j l_k', l_j u_k', u_j l_k', u_j u_k'),\] \[U_{jk} = \max(l_j l_k', l_j u_k', u_j l_k', u_j u_k').\]
It can be seen that $[l_j,u_j]\cdot[l_k',u_k'] \subset [L_{jk},U_{jk}]$ and 
\begin{align*}
U_{jk}-L_{jk} 
&\le \max_{f_1,f_2\in\{l_j,u_j\}} \max_{g_1,g_2\in\{l_k',u_k'\}} \abs{f_1g_1-f_2g_2} \\
&\le \max_{f_1,f_2\in\{l_j,u_j\}} \max_{g_1,g_2\in\{l_k',u_k'\}} \{\abs{f_1}\cdot\abs{g_1-g_2} + \abs{f_1-f_2}\cdot\abs{g_2}\} \\
&\le F (u_k'-l_k') + (u_j-l_j) G .
\end{align*}
By Minkowski's inequality and H\"older's inequality, 
\begin{align*}
\norm{U_{jk}-L_{jk}}_{L^r(P)} &\le \norm{F (u_k'-l_k')}_{L^r(P)} + \norm{(u_j-l_j) G}_{L^r(P)} \\
&\le \norm{F}_{L^p(P)} \norm{u_k'-l_k'}_{L^q(P)} + \norm{u_j-l_j}_{L^p(P)} \norm{G}_{L^q(P)} \le 2 \varepsilon .
\end{align*}
This gives the desired result.
\end{proof}

\begin{proof}[Proof of Lemma~\ref{lem:cover}]
By Henri Cartan's vectorial mean value theorem, \[\rho(s,t) \le \ell(\varphi|_{[s,t]}) = \int_s^t \norm{\dv*{\varphi(u)}{u}} \dd{u} ,\quad \forall s \le t .\]
If $t_j \in \mathcal{T}$, $j=1,\dots,N$, are chosen such that $\max\{\ell(\varphi|_{[\inf \mathcal{T}, t_1]}) , \ell(\varphi|_{[t_N, \sup \mathcal{T}]})\} \le \varepsilon/2$ and that $\ell(\varphi|_{[t_{j-1},t_j]}) \le \varepsilon$ for $j=2,\dots,N$, then any $t \in \mathcal{T}$ satisfies that $\min_j \rho(t,t_j) \le \varepsilon/2$, i.e., $t_j$'s form an $(\varepsilon/2)$-covering of $(\mathcal{T},\rho)$.
The smallest $N$ is exactly $\lceil \ell(\varphi)/\varepsilon \rceil$, completing the proof.
\end{proof}

\begin{proof}[Proof of Lemma~\ref{lem:ent_int}]
Integrating by parts and taking the change of variables $t = \log^{1/2}(1+b/\varepsilon)$, we obtain 
\[ \int_0^a \log^{1/2}(1+b/\varepsilon) \dd{\varepsilon} = a\log^{1/2}(1+b/a) + \int_{\log^{1/2}(1+b/a)}^\infty \frac{b}{\exp(t^2)-1} \dd{t} ,\]
where we have used $\lim_{\varepsilon\searrow0} \varepsilon \log^{1/2}(1+b/\varepsilon) = 0$.
Note that the function $h(x) = \{\exp(x)-1\}/x$ is increasing on $(0,\infty)$, which can be seen from that 
\[ \dv{h(x)}{x} = \frac{\exp(x)x-\{\exp(x)-1\}}{x^2} = \frac{\exp(x)}{x^2}\{x-1+\exp(-x)\} > 0 .\]
It follows that $\{\exp(t^2)-1\}/t^2 \ge (b/a)/\log(1+b/a)$, and thus 
\[ \int_{\log^{1/2}(1+b/a)}^\infty \frac{b}{\exp(t^2)-1} \dd{t} \le \int_{\log^{1/2}(1+b/a)}^\infty \frac{a\log(1+b/a)}{t^2} \dd{t} = a\log^{1/2}(1+b/a) .\]
This completes the proof.
\end{proof}

\clearpage
\phantomsection
\addcontentsline{toc}{section}{References}
\bibliographystyle{apalike}
\bibliography{ref}
\end{document}